\pgfplotsset{compat=1.15}
\newcommand\prove[1]{{\ifthenelse{\boolean{arxiv}}{}{#1}}}
\newcommand\proveapp[1]{{\ifthenelse{\boolean{arxiv}}{#1}{}}}
\renewcommand \thepart{}
\renewcommand \partname{}
\theoremstyle{plain}
\newtheorem{thm}{\protect\theoremname}[section]
\theoremstyle{plain}
\newtheorem{lem}{\protect\lemmaname}[section]
\theoremstyle{rem}
\newtheorem{rem}{\protect\remname}
\theoremstyle{plain}
\providecommand{\corollaryname}{Corollary}
\theoremstyle{plain}
\newtheorem{coro}{\protect\corollaryname}
\theoremstyle{plain}
\theoremstyle{plain}
\providecommand{\propositionname}{Proposition}
\providecommand{\theoremname}{Theorem}
\newcommand\condspace{{\ifthenelse{\boolean{doublecolumn}}{}{}}}
\newcommand\condsep{{\ifthenelse{\boolean{doublecolumn}}{\vspace{-\topsep}}{}}}
\newcites{app}{References in appendix}
\providecommand{\lemmaname}{Lemma}
\providecommand{\remname}{Remark}
\providecommand{\theoremname}{Theorem}
\providecommand{\defname}{Definition}
\providecommand{\examplename}{Example}
\newcommand{\assign}{:=}
\newcommand{\cdummy}{\cdot}
\newcommand{\tmop}[1]{\ensuremath{\operatorname{#1}}}
\newcommand{\tmtextbf}[1]{\text{{\bfseries{#1}}}}
\newcommand{\tmtextit}[1]{\text{{\itshape{#1}}}}
\newcommand{\A}{\ensuremath{\mathbf{A}}}
\newcommand{\tma}{\ensuremath{\mathbf{a}}}
\newcommand{\D}{\mathbf{D}}
\newcommand{\tmd}{\mathbf{d}}
\newcommand{\x}{\mathbf{x}}
\newcommand{\maxf}[1]{\underset{#1}{\tmop{maximize}}}
\newcommand{\minf}[1]{\underset{#1}{\tmop{minimize}}}
\newcommand{\1}{\textbf{1}}
\newcommand{\e}{\mathbf{e}}
\newcommand{\0}{\textbf{0}}
\newcommand{\M}{\mathbf{M}}
\newcommand{\z}{\mathbf{z}}
\newcommand{\bs}{\mathbf{S}}
\newcommand{\I}{\mathbf{I}}
\newcommand{\diag}{\tmop{diag}}
\newcommand{\diagm}{\tmop{diagm}}
\newcommand{\tmP}{\mathbf{P}}
\newcommand{\Z}{\mathbf{Z}}
\newcommand{\X}{\mathbf{X}}
\newcommand{\tmv}{\mathbf{v}}
\begin{document}

\title{Scalable Approximate Optimal Diagonal Preconditioning}

\setlength{\parindent}{0pt}
\linespread{1.0}
\author[1]{Wenzhi Gao}
\author[2]{Zhaonan Qu}
\author[1,2]{Madeleine Udell}
\author[1,2]{Yinyu Ye}
\affil[1]{ICME, Stanford University}
\affil[2]{Management Science and Engineering, Stanford University\vspace{10pt}\\ \texttt{\{gwz,zhaonanq,udell,yyye\}@stanford.edu}}
\maketitle

\begin{abstract}
We consider the problem of finding the optimal diagonal preconditioner for a positive definite matrix. Although this problem has been shown to be solvable and various methods have been proposed, none of the existing approaches are scalable to matrices of large dimensions or when access is limited to black-box matrix-vector products, thereby significantly limiting their practical application. In view of these challenges, we propose practical algorithms applicable to finding approximate optimal diagonal preconditioners of large sparse systems. Our approach is based on the idea of dimension reduction and combines techniques from semi-definite programming (SDP), random projection, semi-infinite programming (SIP), and column generation. Numerical experiments demonstrate that our method scales to sparse matrices of size greater than $10^7$. Notably, our approach is efficient and implementable using only black-box matrix-vector product operations, making it highly practical for various applications.
\end{abstract}

\section{Introduction} \label{sec:intro}

It is common to reduce
the condition number \[\kappa ( \M ) = 
\lambda_{\max} ( \M ) / \lambda_{\min} ( \M )\]
of a positive definite matrix $\M \in \mathbb{S}_{+ +}^n$ 
using a diagonal preconditioner $\D \in \mathbb{S}_{+ +}^n$ to form the preconditioned matrix $\D^{-1/2}\M\D^{-1/2}$.
In this paper, we seek to find the optimal diagonal preconditioner by solving the problem
\begin{equation}\label{eqn:formulation}
\begin{array}{ll}
\minf{\D \text{~diagonal}} & \kappa ( \D^{- 1/2} \M \D^{- 1/2} ) \\
\mbox{subject to} & \diag(\D) \geq 0
\end{array}
\end{equation}

Preconditioning is critical to ensure good performance of many algorithms across numerical optimization and numerical
linear algebra {\cite{saad2003iterative}}. 
Among the wide variety of proposed preconditioners
{\cite{benzi2003robust,benzi1999comparative,chen2006modified,frangella2023randomized}},
diagonal preconditioners are particularly popular in practice
{\cite{giselsson2014diagonal, pock2011diagonal, takapoui2016preconditioning}}.
They are easy to apply, as they simply scale the rows or columns of the matrix. More importantly, there are many fast heuristic methods which produce effective diagonal preconditioners {\cite{jacobi1845ueber,ruiz2001scaling,bradley2010algorithms, knight2013fast, knight2014symmetry}}.\\

Despite the success of heuristic diagonal preconditioners in practice, 
none of them is guaranteed to reduce the condition
number; indeed, they may even increase it. To address this issue, a number of researchers have studied the problem of \emph{optimal} diagonal preconditioning, which seeks the diagonal preconditioner that maximally reduces the condition number \cite{braatz1994minimizing, greenbaum1989optimal, van1969condition, shapiro1982optimally,watson1991algorithm, overton1992large}. These results show that finding the optimal diagonal
preconditioner can be formulated as a convex optimization problem. 
However, efficient optimization routines were not yet available then, so they mainly focused on theoretical aspects of the problem. In contrast, this work focuses on the practical and algorithmic aspects of optimal diagonal preconditioning. \\

More recent advances in optimization theory and practice 
have opened the door to algorithms for computing the optimal diagonal preconditioner
{\cite{jambulapati2023structured, jambulapati2020fast, lu2011minimizing, qu2022optimal}}.
For example, \cite{qu2022optimal} shows that \eqref{eqn:formulation} can be reformulated as a semi-definite programming problem (SDP) with $n+1$ variables, which in theory
can be solved to any specified accuracy in polynomial time \cite{wolkowicz2012handbook}. Numerical experiments in \cite{qu2022optimal} demonstrate that their SDP approach can find the optimal diagonal preconditioner for matrices of size up to 5000 with customized solvers.
\cite{overton1992large} designs a successive quadratic minimization procedure using the Cholesky decomposition of $\M$.
\cite{jambulapati2020fast} provides an alternative approach based on structured mixed packing and covering
SDPs. They provide near-optimal complexity guarantees for their algorithm but do not consider practical implementations.

Other researchers have studied the problem of optimizing the condition number 
allowing preconditioners in some convex set (not only diagonal) using tools from nonsmooth optimization \cite{marechal2009optimizing, chen2011minimizing} or
semi-definite programming \cite{lu2011minimizing}. Recently \cite{kunstner2023searching} proposes a multi-dimensional search method that finds a preconditioner whose condition number is within $\mathcal{O}(\sqrt{n})$ factor of the optimal condition number, and their approach can be embedded in an iterative procedure.\\

However, none of the proposed approaches scales to large matrices: most have $\mathcal{O}(n^3)$ complexity or worse, making them not useful for large-scale problems that require preconditioning the most. Moreover, in many applications, the matrix $\M$ itself is not available and is accessible only through matrix-vector products $\M\tmv$, while the existing methods require entry-wise access to $\M$. These issues significantly
limit the appeal of the previous methods to find the optimal diagonal preconditioner.
This paper, in contrast, considers the following problem:
\begin{center}
\quad{\textit{Can a \textbf{scalable} algorithm approximate the optimal diagonal preconditioner?}}
\end{center}
Or, can we efficiently compute a preconditioner to reduce the condition number to within a constant factor of that achieved by the optimal diagonal preconditioner? 

\subsection{Contributions}

In this paper, we answer the above question positively and provide practical algorithms to approximate the optimal diagonal preconditioner for large sparse matrices efficiently. Our results leverage and combine several
research directions in optimization, including semi-definite programming, random projection, semi-infinite programming, and column generation. We are guided by the simple yet powerful idea of dimension reduction, which manifests itself in several distinct ways. Our contributions are as follows.
\begin{enumerate}[leftmargin=*]
\item \emph{Change of basis.} We extend the framework of \cite{qu2022optimal} by introducing an SDP formulation to compute the optimal diagonal preconditioner restricted to a \emph{subspace} spanned by $k$ given diagonal matrices, which we call a basis. 
The subspace formulation increases the flexibility and scalability of the SDP approach. 
By taking a set of heuristic diagonal preconditioners as the basis,
we obtain a new diagonal preconditioner guaranteed to perform better than heuristic preconditioners. We also analyze the optimal preconditioner in a \emph{randomized}  subspace. It is shown that the benefit of expanding the random subspace marginally decreases.
\item \emph{Semi-infinite programming.} 
Even with $k=\mathcal{O}(1)$, solving an SDP is still costly for large $n$. 
We develop an efficient method to solve the subspace optimal diagonal preconditioning SDP problem using 
the interior point cutting plane method {\cite{sivaramakrishnan2002linear}}, 
leveraging ideas from semi-infinite programming (SIP)
to approximate the SDP constraint with an increasing number of linear constraints. 
The complexity of our approach is $\mathcal{O} ( (k^2 \mathcal{T}_{\lambda_{\min}} + k^4) \log ( 1/\varepsilon ) )$, which only involves several extremal eigenvalue computations if  $k$ is small. More importantly, our method can be implemented even when $\M$ is only accessible via matrix-vector products.
  
\item \emph{Column generation.} We develop a Frank-Wolfe type iterative method that monotonically reduces condition number by solving a sequence of SDPs over a basis of size $k=2$. 
This algorithm combines the idea of column generation and semi-infinite duality. 
The algorithm operates in a two dimensional subspace, and requires solving an SDP of only 3 scalar variables at each iteration. 
Moreover, our result suggests that extremal eigenvectors contain information on reducing the matrix condition number. 
This observation may be of independent interest in numerical analysis.
\end{enumerate}

\paragraph{Structure of the paper}

The rest of the manuscript is organized as follows. Section \ref{sec:preliminaries} discusses some notations and preliminaries; Section \ref{sec:subspace} introduces the formulation of the optimal diagonal preconditioner in a subspace, and proposes a randomized approach to improve its performance; Section \ref{sec:semiinf} shows how to efficiently solve the subspace formulation using techniques from semi-infinite programming; Section \ref{sec:colgen} introduces the column generation idea from linear programming to further enhance scalability;
Section \ref{sec:exp} presents numerical experiments to  demonstrate the practical appeal of our proposed method. 
\section{Preliminaries} \label{sec:preliminaries}

\paragraph{Notation.} Throughout the paper, we use bold letters $\A, \tma$ to denote matrices and vectors.
We use $\langle \cdot, \cdot \rangle$ to denote vector or matrix inner products and use $\| \cdot \|$ to denote the Euclidean norm; $\| \cdot \|_p$ denotes matrix or vector $p$-norm; $\tmop{tr}
(\A) = \sum_{i=1}^n a_{i, i}$ denotes the trace of a matrix; $\e_i$ denotes the $i$-th column of the
identity matrix $\I$; $\0$ denotes zero matrix or a zero vector; $\succeq$
denotes the partial order defined over the cone of positive semi-definite
matrices $\mathbb{S}_+^n$: $\A \succeq \textbf{B}$ if and only if $\A - \mathbf{B}$ is positive semi-definite. Given a positive definite matrix $\M \in
\mathbb{S}_{+ +}^n$, its maximum and minimum eigenvalues are given by
$\lambda_{\max} ( \M )$ and $\lambda_{\min} ( \M )$
respectively, and its condition number is defined by $\kappa ( \M )
\assign \frac{\lambda_{\max} ( \M )}{\lambda_{\min} ( \M
)}$. Given a column vector $\tmd \in \mathbb{R}^n$, $\diagm(
\tmd )$ denotes the diagonal matrix in $\mathbb{R}^{n \times n}$ with $\tmd$ on the diagonal, 
and $\tmd = \diag(\D)$ extracts the diagonal from $\D$. 
We use capital $\D$ and $\diagm(\tmd)$ interchangeably when their correspondence is clear from context. 
We define index set $[n]:=\{1,\ldots n\}$.\\

We start from a recent SDP formulation of the optimal diagonal preconditioning problem \cite{qu2022optimal}.

\begin{thm}[SDP formulation of optimal diagonal preconditioning \cite{qu2022optimal}]\ \label{thm:optprecond}

Given a positive definite matrix $\M \in \mathbb{S}_{+ +}^n$, 
the optimal diagonal preconditioner $\D^\star = \diagm(\tmd^\star)$ and solution to \eqref{eqn:formulation}
can be obtained by solving the semi-definite optimization problem
\begin{equation} \label{odp}
\begin{array}{ll}
\maxf{\tau, \tmd} & \qquad \tau \\
\textup{subject to} & \M \tau \preceq \diagm( \tmd ) \\
                   & \M \succeq \diagm( \tmd ) \\
                   & \tmd \geq 0
\end{array}
\end{equation}
with variables $\tmd \in \mathbb{R}^n_{++}$ and $\tau \in \mathbb{R}$. 
The solution $\tau^* = \kappa^{-1}$ equals the inverse condition number at the solution.
\end{thm}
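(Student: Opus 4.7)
The plan is to recast the ratio $\lambda_{\max}/\lambda_{\min}$ of the preconditioned matrix $\tilde{\M} := \D^{-1/2}\M\D^{-1/2}$ as a pair of linear matrix inequalities in $\D$, and then to use homogeneity of $\kappa$ in $\D$ to reduce the problem to maximizing a single scalar $\tau$. First I would observe that since $\kappa(\D^{-1/2}\M\D^{-1/2}) = \kappa(c \D^{-1/2}\M\D^{-1/2})$ for every $c>0$, the minimization in \eqref{eqn:formulation} is invariant under $\D \mapsto c\D$, so one degree of freedom may be fixed arbitrarily.

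Next I would translate the condition-number bound into LMIs. Passing through the generalized Rayleigh quotient,
\begin{equation*}
\lambda_{\min}(\tilde{\M}) = \min_{\tmv\neq \0}\frac{\tmv^\top \M \tmv}{\tmv^\top \D \tmv},\qquad
\lambda_{\max}(\tilde{\M}) = \max_{\tmv\neq \0}\frac{\tmv^\top \M \tmv}{\tmv^\top \D \tmv},
\end{equation*}
which is well-defined since $\D \succ 0$. Hence $\alpha \D \preceq \M \preceq \beta \D$ is equivalent to $\alpha \leq \lambda_{\min}(\tilde{\M})$ and $\lambda_{\max}(\tilde{\M}) \leq \beta$, so $\kappa(\tilde{\M}) \leq \beta/\alpha$. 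Using the scale invariance, I would normalize $\beta=1$, which places $\M \preceq \diagm(\tmd)$ and leaves $\alpha\,\diagm(\tmd) \preceq \M$; setting $\tau = \alpha$ recovers exactly the two LMI constraints in \eqref{odp}, while $\kappa(\tilde{\M}) \leq 1/\tau$, so maximizing $\tau$ minimizes $\kappa$.

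To finish, I would verify both directions of the equivalence. For the easy direction, any feasible $(\tau,\tmd)$ of \eqref{odp} gives a diagonal $\D = \diagm(\tmd)$ with $\kappa(\D^{-1/2}\M\D^{-1/2}) \leq 1/\tau$, so the optimal $\tau$ is at most $1/\kappa^\star$. For the reverse, given any diagonal $\D^\star$ attaining $\kappa^\star$, I would set $\alpha^\star = \lambda_{\min}(\D^{\star-1/2}\M\D^{\star-1/2}) > 0$ and rescale to $\D' = (\alpha^\star)^{-1}\D^\star$. A direct computation shows $\M \succeq \D'$ and $(1/\kappa^\star)\M \preceq \D'$, so $(\tau,\tmd) = (1/\kappa^\star, \diag(\D'))$ is feasible, yielding $\tau^\star = 1/\kappa^\star$. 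Nonnegativity of $\tmd^\star$ follows because the SDP cone constraint $\M \succeq \diagm(\tmd)$ already forces $d_i = \e_i^\top \diagm(\tmd) \e_i \leq \e_i^\top \M \e_i < \infty$, and feasibility with $\tau > 0$ forces $\tmd > \0$ entrywise.

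The main obstacle I anticipate is subtle rather than technical: one must argue that the SDP feasibility constraint $\M \succeq \diagm(\tmd)$, rather than a strict inequality, does not lose the strict positivity needed for a bona fide preconditioner, and that the maximum is attained (as opposed to approached in the limit). Since $\M \succ 0$ and the feasible set with $\tau$ bounded below by any achievable condition-number inverse is closed and bounded in the relevant directions, compactness (after projecting out the scaling ambiguity) guarantees attainment; this is where I would spend the most care.
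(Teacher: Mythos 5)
The paper does not actually prove Theorem~\ref{thm:optprecond} in the manuscript; it is cited from \cite{qu2022optimal}. Your route (generalized Rayleigh quotient, the sandwich $\alpha\D \preceq \M \preceq \beta\D$, and scale invariance) is indeed the standard proof of this fact, but there is a direction error that, as written, breaks the derivation. Normalizing $\beta=1$ turns the sandwich into $\alpha\D \preceq \M \preceq \D$, whose LMIs are $\tau\,\diagm(\tmd)\preceq\M$ and $\M\preceq\diagm(\tmd)$ with $\tau=\alpha$. That is \emph{not} \eqref{odp}: the SDP in the theorem has $\M\tau\preceq\diagm(\tmd)$ and $\M\succeq\diagm(\tmd)$, i.e., the sandwich $\tau\M\preceq\D\preceq\M$, which is what you get by normalizing $\alpha=1$ and setting $\tau=1/\beta$ (equivalently, fixing $\lambda_{\min}(\D^{-1/2}\M\D^{-1/2})=1$, not $\lambda_{\max}$). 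The two formulations are equivalent via the map $\D\leftrightarrow\tau\D$, but your claim that $\beta=1$ ``recovers exactly the two LMI constraints in \eqref{odp}'' is false as stated.

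The same inversion propagates into your reverse direction. Setting $\D' = (\alpha^\star)^{-1}\D^\star$ with $\alpha^\star=\lambda_{\min}(\D^{\star\,-1/2}\M\D^{\star\,-1/2})$ gives $(\D')^{-1/2}\M(\D')^{-1/2} = \alpha^\star\,\D^{\star\,-1/2}\M\D^{\star\,-1/2}$, whose minimum eigenvalue is $(\alpha^\star)^2$, not $1$; so $\M\succeq\D'$ fails whenever $\alpha^\star<1$ (take $\M=\tfrac12\I$, $\D^\star=\I$). The correct rescaling is $\D' = \alpha^\star\D^\star$, which makes $\lambda_{\min}\big((\D')^{-1/2}\M(\D')^{-1/2}\big)=1$ and $\lambda_{\max}\big((\D')^{-1/2}\M(\D')^{-1/2}\big)=\kappa^\star$, so that $\D'\preceq\M$ and $(1/\kappa^\star)\M\preceq\D'$ both hold and $(\tau,\tmd)=(1/\kappa^\star,\diag(\D'))$ is feasible for \eqref{odp}. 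With this fix, both directions go through and the remainder of your argument (positivity of $\tmd$ from $\diagm(\tmd)\succeq\tau\M\succ 0$, attainment via compactness after fixing the scaling) is sound.
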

Denote by $\tau^{\star}\leq 1$ the optimal value of \eqref{odp}, and
by $\kappa^{\star}:=\min_{\D \succeq \0\text{ diagonal}} \kappa ( \D^{- 1/2} \M \D^{- 1/2})$ the optimal condition number. We have $\kappa^{\star} = 1 / \tau^{\star}$. 
A feasible solution $(\tau,\tmd)$ to \eqref{odp} yields a preconditioner $\D=\tmop{diagm} ( \tmd )$ such that $\kappa(\D^{- 1/2} \M \D^{- 1/2}) \leq 1/\tau$. 
\eqref{odp} solves the optimal diagonal preconditioning problem by \emph{maximizing} the inverse condition number. The SDP formulation provides a theoretically viable way to compute
the optimal diagonal preconditioner. 
However, to our knowledge, even when $\M$ is sparse, no existing method can solve problem~\eqref{odp} to high accuracy for large matrices. Starting from \eqref{odp}, this paper aims to find \textit{scalable and accurate} methods to compute an approximate optimal diagonal preconditioner. According to the experiments, our method reduces the condition number of sparse matrices with size up to $10^7$ by half in less than 200 seconds.

\section{Optimal diagonal preconditioner in a subspace} \label{sec:subspace}

We begin by expressing a diagonal preconditioner in terms of a basis $\{ \e_i \e_i^{\top}, i \in [n] \}$ of the space of diagonal matrices:
\begin{equation} \label{eqn:f} \tag{F}
\maxf{\z, \tau} ~\tau \quad \text{subject to}  \quad \M \tau - \sum_{i = 1}^n \e_i\e_i^\top z_i \preceq \0, \quad \sum_{i=1}^{n}\e_i \e_i^{\top} z_i - \M\preceq \0.	
\end{equation}
where  $\z \in \mathbb{R}^n$ and $\D = \sum_{i=1}^{n}\e_i \e_i^{\top}z_{i} \in \tmop{span} \{ \e_i \e_i^{\top}, i \in [n] \}$. 
We can further write \eqref{eqn:f} in the following vector form \eqref{eqn:v}
\begin{equation} \label{eqn:v} \tag{V}
\maxf{\z, \tau} ~\tau \quad \text{subject to}  \quad \M \tau - \tmop{diagm} \Big(\sum_{i=1}^{n}\e_{i}z_{i}\Big) \preceq \0, \quad \tmop{diagm} \Big(\sum_{i=1}^{n}\e_{i}z_{i}\Big) - \M\preceq \0.	
\end{equation}
We see that \eqref{eqn:v} searches for the optimal diagonal preconditioner $\tmd^\star$ in $\mathbb{R}^n$ using the standard basis for the vector space of diagonal preconditioners. 
However, this basis has serious drawbacks: notably, the basis elements $\e_i \e_i^\top$ are not themselves valid preconditioners since they are only rank one. 
As a result, searching over the standard basis requires solving an SDP of $n + 1$ variables, which is prohibitive in practice for large $n$. \\ 

Now we introduce our first dimension reduction idea: 
instead of using $\mathbb{R}^n$,  we focus on a subspace of diagonal matrices represented by $\mathcal{S}_k := \tmop{span} \{\tmd_1, \ldots, \tmd_k\}\subseteq \mathbb{R}^n$ with $\dim (\mathcal{S}_k) = k \leq n$. Notably, these basis elements $\{\tmd_i\}$ can themselves represent positive definite diagonal preconditioners and thus lead to sensible problems even for $k \ll n$.
This proposal leads to the following SDP: 
\begin{equation}
\label{eqn:odp-subspace}
\tag{S}\qquad \tau_k^{\star} \assign \max_{\z, \tau} ~\tau \quad \text{subject to}  \quad \M \tau - \sum_{i = 1}^k \D_i z_i \preceq \0, \quad \sum_{i = 1}^k \D_i z_i - \M\preceq \0,
\end{equation}
and recall that $\D_i = \tmop{diagm}(\tmd_i)$. We refer to \eqref{eqn:odp-subspace} as the (subspace) optimal diagonal preconditioning SDP. \eqref{eqn:odp-subspace} generalizes \eqref{eqn:f} by replacing the basis matrices $\e_1 \e_1^{\top},\dots,\e_n \e_n^{\top}$ with $\D_1,\dots,\D_k$, and \eqref{eqn:odp-subspace} is feasible whenever $\mathcal{S}_k$ contains strictly positive vectors.
Intuitively, it finds the combination $\sum_{i = 1}^k \D_i z_i$ of given diagonal matrices that minimizes $\kappa((\sum_{i = 1}^k \D_i z_i)^{-1/2}\M(\sum_{i = 1}^k \D_i z_i)^{-1/2})$. Letting $\kappa_k^{\star}$ denote this minimal condition number achievable by preconditioners in $\mathcal{S}_k$, we have $\tau_k^{\star} = 1/\kappa_k^{\star}$. The subspace formulation \eqref{eqn:odp-subspace} lays the foundation of our algorithms. 

\subsection{Optimal diagonal preconditioner in a deterministic subspace}

An important question concerning the dimension reduction idea is how to construct the basis $\{\tmd_1, \ldots, \tmd_k\}$. We begin with the simplest setting, where a set of deterministic diagonal preconditioners based on $\M$ are readily available. Along with the development of iterative solvers, several popular heuristic diagonal preconditioners have been developed, such as the Jacobi preconditioner, Ruiz scaling \cite{ruiz2001scaling}, and the diagonal approximate inverse preconditioner \cite{benzi1999comparative}. These preconditioners are inexpensive to compute and behave favorably in practice. Therefore, they are suitable candidates to construct the deterministic subspace $\mathcal{S}$.\\ 

Given a set of $k$ heuristic diagonal preconditioners $\mathcal{H} \assign \{ \tmd_{h_1}, \ldots, \tmd_{h_k} \}$, the
heuristic subspace is given by $\mathcal{S}_{\mathcal{H}} \assign \tmop{span} \mathcal{H} $. Since each heuristic preconditioner is strictly positive, $\mathcal{S}_{\mathcal{H}}
\cap \mathbb{R}_{+ +}^n \neq \varnothing$. Moreover, the optimal preconditioner in $\mathcal{H}$ does at least as well as the best individual basis preconditioner.
\begin{thm}\label{thm:1}
  Given subspace $\mathcal{S}_k = \mathcal{S}_{\mathcal{H}} = \tmop{span} \{
  \tmd_{h_1}, \ldots, \tmd_{h_k} \}$, let
  $\D_\mathcal{H}^{\star}:=\sum_{i=1}^k \D_{h_i}z_i^\star$ denote the optimal diagonal preconditioner in $\mathcal{H}$, where $\z^\star$ solves \eqref{eqn:odp-subspace}. Then $\kappa((\D_\mathcal{H}^{\star})^{-1/2}\M (\D_\mathcal{H}^{\star})^{-1/2}) \leq \min_{i \in [k]} \{ \kappa (
     \D_{h_i}^{- 1/2} \M \D_{h_i}^{- 1/2}) \}$.
\end{thm}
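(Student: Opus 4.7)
The plan is to show that each individual heuristic preconditioner $\tmd_{h_i}$ corresponds, up to an appropriate scaling, to a feasible point of the subspace SDP \eqref{eqn:odp-subspace}. Since the SDP maximizes $\tau = 1/\kappa$, the optimal value $\tau_k^\star$ must dominate the objective at every such feasible point, which will translate directly to the desired condition number inequality.

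More concretely, I would first fix any $i \in [k]$ and set $\A_i := \D_{h_i}^{-1/2} \M \D_{h_i}^{-1/2}$, with $\lambda_{\min}^{(i)}$ and $\lambda_{\max}^{(i)}$ its extremal eigenvalues. The standard sandwiching $\lambda_{\min}^{(i)} \I \preceq \A_i \preceq \lambda_{\max}^{(i)} \I$, conjugated by $\D_{h_i}^{1/2}$, yields
\begin{equation*}
\lambda_{\min}^{(i)} \D_{h_i} \preceq \M \preceq \lambda_{\max}^{(i)} \D_{h_i}.
\end{equation*}
Next, I would construct the candidate point $(\bar{\tau}, \bar{\z})$ with $\bar{z}_i = \lambda_{\min}^{(i)}$, $\bar{z}_j = 0$ for $j \neq i$, and $\bar\tau = \lambda_{\min}^{(i)}/\lambda_{\max}^{(i)} = 1/\kappa(\A_i)$. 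Feasibility of both SDP constraints in \eqref{eqn:odp-subspace} follows immediately from the two sides of the sandwich: $\sum_j \D_j \bar z_j = \lambda_{\min}^{(i)} \D_{h_i} \preceq \M$ gives the second constraint, and $\M \bar{\tau} \preceq \lambda_{\max}^{(i)} \bar{\tau}\, \D_{h_i} = \lambda_{\min}^{(i)} \D_{h_i}$ gives the first.

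Having a feasible point with objective $1/\kappa(\A_i)$ implies $\tau_k^\star \geq 1/\kappa(\D_{h_i}^{-1/2}\M\D_{h_i}^{-1/2})$. Taking the maximum of the right-hand side over $i \in [k]$ and using $\kappa_k^\star = 1/\tau_k^\star$ (which follows from Theorem \ref{thm:optprecond} applied to the subspace formulation) gives $\kappa_k^\star \leq \min_{i \in [k]} \kappa(\D_{h_i}^{-1/2}\M\D_{h_i}^{-1/2})$, which is precisely the claim, since $\D_\mathcal{H}^\star$ attains $\kappa_k^\star$ by construction.

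There is no real obstacle here; the argument is a one-step reduction exhibiting an explicit feasible point. The only mild subtleties are: (i) checking that the scalar $\bar z_i = \lambda_{\min}^{(i)}$ is positive, which is automatic since $\A_i$ is positive definite; and (ii) noting that the choice $\bar{\z} = \lambda_{\min}^{(i)} \e_i$ lies in $\mathcal{S}_k$ trivially. No properties of the other basis elements are used, so the bound holds for any enlargement of the basis as well, which is consistent with the monotonicity of $\tau_k^\star$ in $k$ exploited later in the paper.
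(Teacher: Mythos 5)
Your proof is correct and follows essentially the same route as the paper: the paper's one-line argument sets $\z$ in the direction of $\e_i$ and invokes SDP optimality, while you spell out the missing scaling $\bar z_i = \lambda_{\min}^{(i)}$ and $\bar\tau = 1/\kappa(\A_i)$ and verify feasibility explicitly, which is a strictly more careful rendering of the same idea.
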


\prove{
\begin{proof}
We recover each preconditioner's condition number by letting $\z = \e_i, i\in[k]$. By the optimality of SDP, we complete the proof.
\end{proof}
}

\begin{coro}
If ${\mathrm{\mathbf{1}}} \in \mathcal{H}$, then $\kappa((\D_\mathcal{H}^{\star})^{-1/2}\M(\D_\mathcal{H}^{\star})^{-1/2}) \leq \kappa ( \M )$.
\end{coro}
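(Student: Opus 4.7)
The plan is to invoke Theorem \ref{thm:1} directly with the specific basis element $\mathbf{1}$. Since $\mathbf{1} \in \mathcal{H}$, we can take one of the $\tmd_{h_i}$, say $\tmd_{h_j} = \mathbf{1}$, so that the associated basis preconditioner is $\D_{h_j} = \diagm(\mathbf{1}) = \I$. Then Theorem \ref{thm:1} gives
\[
\kappa\bigl((\D_\mathcal{H}^{\star})^{-1/2}\M (\D_\mathcal{H}^{\star})^{-1/2}\bigr) \leq \min_{i \in [k]} \kappa\bigl(\D_{h_i}^{-1/2} \M \D_{h_i}^{-1/2}\bigr) \leq \kappa\bigl(\I^{-1/2} \M \I^{-1/2}\bigr) = \kappa(\M),
\]
which is exactly the claim.

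Alternatively, and equivalently, one can argue this directly from the SDP \eqref{eqn:odp-subspace} without appealing to Theorem \ref{thm:1}: set $z_j = 1/\lambda_{\max}(\M)$ and $z_i = 0$ for $i \neq j$, and $\tau = \lambda_{\min}(\M)/\lambda_{\max}(\M) = 1/\kappa(\M)$. Then $\sum_i \D_{h_i} z_i = z_j \I$ and the constraints $\M\tau \preceq z_j \I \preceq \M$ reduce to $\lambda_{\max}(\M \tau) \leq z_j$ and $z_j \leq \lambda_{\min}(\M)$, both of which hold with equality for the choices above. Thus $(\tau, \z)$ is feasible with $\tau = 1/\kappa(\M)$, so $\tau_k^\star \geq 1/\kappa(\M)$, giving $\kappa_k^\star \leq \kappa(\M)$.

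There is no real obstacle here: the corollary is an immediate specialization of the previous theorem. The only thing to verify is the trivial fact that $\diagm(\mathbf{1}) = \I$ and $\kappa(\I^{-1/2}\M\I^{-1/2}) = \kappa(\M)$, after which the inequality follows from Theorem \ref{thm:1} applied to the index $j$ with $\tmd_{h_j} = \mathbf{1}$.
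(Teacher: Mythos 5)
Your primary argument---invoking Theorem \ref{thm:1} with the basis element $\mathbf{1}$, observing $\diagm(\mathbf{1}) = \I$ so that $\kappa(\I^{-1/2}\M\I^{-1/2}) = \kappa(\M)$ appears in the minimum---is exactly how the paper treats this corollary (as an immediate specialization, with no separate proof given). Your alternative direct-feasibility argument contains a slip: you set $z_j = 1/\lambda_{\max}(\M)$, but the constraint $z_j\I \preceq \M$ forces $z_j \leq \lambda_{\min}(\M)$, which fails unless $\lambda_{\max}(\M)\lambda_{\min}(\M) \geq 1$; the correct choice is $z_j = \lambda_{\min}(\M)$ together with $\tau = 1/\kappa(\M)$, after which both constraints hold with equality as you intended.
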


\begin{rem} \label{rem:1}
When $\mathcal{S}_k = \mathcal{S}_1 = \{\mathbf{1}\}$, \eqref{eqn:odp-subspace} simply computes the condition number of $\M$. In other words, the SDP formulation provides an alternative way to compute the condition number of a matrix.
\end{rem}

Now our approach constructs a better preconditioner in a deterministic subspace spanned by heuristic preconditioners. 
Experimental results demonstrate that it sometimes achieves significant reductions in condition number compared to individual basis preconditioners. 
However, there are drawbacks to using only heuristic diagonal preconditioners. First, we are limited by the availability of such preconditioners. Moreover, the whole-space optimal preconditioner $\tmd^\star$ may not be contained in $\mathcal{S}_\mathcal{H}$, and it is difficult to quantify the sub-optimality of $\tmd_\mathcal{H}^{\star}$ compared to $\tmd^\star$. These limitations motivate us to consider \emph{randomly} generated basis elements.
\subsection{Optimal diagonal preconditioner in a randomized subspace}
This section considers $\mathcal{S}_k$ to be a randomized subspace. Unlike the limited pool of heuristic preconditioners, this approach allows us to generate many random diagonal matrices arbitrarily. We sample $\tmd_1,
\ldots, \tmd_k$ i.i.d. from some distribution $\mathcal{P}$, and consider $\mathcal{S}_k = \tmop{span} \{ \1, \tmd_1, \ldots, \tmd_k \}$.\\

Now $\tau_k^{\star} = 1 / \kappa_k^{\star}$ is a random variable. Intuitively, as we increase $k$, $\dim (\mathcal{S}_k)$ increases along with the expected objective $\mathbb{E} [\tau_k^{\star}]$, thereby improving the optimal diagonal preconditioner in the randomized subspace. The precise behavior of $\kappa_k^{\star}$ relative to $\kappa^\star$ as $k$ increases is important because it informs us on the magnitude of $k$ in practice that can guarantee a good diagonal preconditioner by solving \eqref{eqn:odp-subspace}. To analyze the effect of increasing $k$, we choose a subgaussian distribution $\mathcal{P}$. Using the theory of random projection, we can establish the following two lemmas. For clarity of exposition, we leave all the proofs in this section in the appendix.

\begin{lem}[Approximate optimality: medium $k$] 
\label{lem:2}  
  Letting $\tau_k^{\star}, k\leq n$ be the optimal  value of \eqref{eqn:odp-subspace}. There exists some universal constant $L > 0$
  such that at least with probability $0.9$,
  \[ \tau_k^{\star} \geq \tau^{\star} - 20 \sqrt{1 + L n^{- 1 / 2}}
     \tfrac{\| \tmd^{\star} \|}{\lambda_{\min} ( \M )}
     \sqrt{1 - \tfrac{k}{n}}, \]
  where $\| \tmd^{\star} \|$ is the least-norm optimal diagonal preconditioner
  and $\lambda_{\min} ( \M )$ is the minimum eigenvalue of $\M$.
\end{lem}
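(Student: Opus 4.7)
The strategy is to exhibit an explicit feasible point for the subspace SDP \eqref{eqn:odp-subspace} by projecting the global optimum $\tmd^\star$ onto $\mathcal{S}_k$, then to control the loss in $\tau$ by the residual of this projection. Let $P_k$ denote the orthogonal projection of $\mathbb{R}^n$ onto $\mathcal{S}_k$, write $\tmd^\star = P_k\tmd^\star + e$ with $e \perp \mathcal{S}_k$, and set $\delta \assign \|e\|_\infty / \lambda_{\min}(\M)$. The key property is that $P_k\tmd^\star \in \mathcal{S}_k$, so any scalar multiple of $P_k\tmd^\star$ is a valid value of $\sum_{i=1}^k \D_i z_i$ in \eqref{eqn:odp-subspace}.

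I would next perform a one-parameter shrinkage to restore feasibility. Consider the candidate $(\tilde \tau,\tilde{\tmd})$ defined by $\tilde{\tmd} = \beta P_k\tmd^\star$ and $\tilde\tau = \beta(\tau^\star - \delta)$, where $\beta = 1/(1+\delta)$. Using $\diagm(\tmd^\star)\preceq \M$, $\M\tau^\star \preceq \diagm(\tmd^\star)$, and $\|\diagm(e)\|_{\mathrm{op}} = \|e\|_\infty = \delta\,\lambda_{\min}(\M)$, a direct computation yields
\begin{equation*}
\diagm(\tilde{\tmd}) = \beta\diagm(\tmd^\star) - \beta\diagm(e) \preceq \beta \M + \beta\|e\|_\infty \I \preceq \M,
\end{equation*}
where the last step uses $\beta\|e\|_\infty \leq (1-\beta)\lambda_{\min}(\M)$ by the choice of $\beta$; an analogous bookkeeping gives $\diagm(\tilde{\tmd}) \succeq \M\tilde\tau$. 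Hence $\tau_k^\star \geq \tilde\tau$, and rearranging plus $\tau^\star\leq 1$ yields
\begin{equation*}
\tau^\star - \tau_k^\star \;\leq\; \frac{\delta(\tau^\star + 1)}{1+\delta} \;\leq\; 2\delta \;=\; \frac{2\|e\|_\infty}{\lambda_{\min}(\M)} \;\leq\; \frac{2\|e\|_2}{\lambda_{\min}(\M)}.
\end{equation*}

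The remaining, and main, difficulty is the random-subspace tail bound $\|e\|_2 \leq 10\sqrt{1+Ln^{-1/2}}\sqrt{1-k/n}\,\|\tmd^\star\|$ with probability $0.9$. Here I would invoke the theory of subgaussian random projections: assembling $\tmd_1,\ldots,\tmd_k$ (centered, unit-variance, WLOG the $\mathbf{1}$ direction being absorbed) as columns of $\A\in\mathbb{R}^{n\times k}$, the Gram matrix satisfies $\tfrac{1}{n}\A^\top\A = \I_k + \E$ with $\|\E\|_{\mathrm{op}}=O(\sqrt{k/n}+k/n)$ by the standard subgaussian covariance estimation bound. Writing $\|P_k\tmd^\star\|^2 = \tmd^{\star\top}\A(\A^\top\A)^{-1}\A^\top\tmd^\star$ and noting $\mathbb{E}\|\A^\top\tmd^\star\|^2 = k\|\tmd^\star\|^2$ with subgaussian concentration around this mean of order $n^{-1/2}$, the perturbation of $(\A^\top\A)^{-1}$ from $\tfrac{1}{n}\I_k$ contributes only a multiplicative $1+Ln^{-1/2}$ factor. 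Combining these two concentration facts via a union bound, each event holding with probability at least $0.95$, yields the stated residual bound; plugging this into the deterministic inequality above and absorbing the universal constant from $2$ into the stated $20$ completes the argument. The technical heart that I expect to be the main obstacle is the uniform control of $(\A^\top\A)^{-1}$ with subgaussian rather than Gaussian entries, since one must argue that the smallest eigenvalue of $\A^\top\A$ stays bounded away from zero with probability at least $0.9$ even when $k$ is moderately large compared to $n$.
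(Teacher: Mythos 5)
Your deterministic step is correct and is actually a slightly different (and somewhat cleaner) route than the paper's. You rescale the projected candidate by $\beta = 1/(1+\delta)$ to restore feasibility, whereas the paper exploits the standing assumption $\mathbf{1} \in \mathcal{S}_k$ to add a free shift $z_0'\I$ that absorbs the residual. Both arguments deliver the same deterministic conclusion $\tau^\star - \tau_k^\star \lesssim \|e\|/\lambda_{\min}(\M)$; your shrinkage version does not need $\mathbf{1}$ to lie in the subspace, which is a mild generalization, at the (harmless) cost of a factor $2$ through $\tau^\star \leq 1$.

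The gap is in the probabilistic step, and it is not a small technical point that a union bound will fix. You propose to control $\|e\| = \|\bar P_k \tmd^\star\|$ by concentrating the Gram matrix $\tfrac{1}{n}\A^\top\A$ around $\I_k$ and concentrating $\|\A^\top\tmd^\star\|^2$ around $k\|\tmd^\star\|^2$. Standard subgaussian covariance estimation gives $\bigl\|\tfrac{1}{n}\A^\top\A - \I_k\bigr\| = O\bigl(\sqrt{k/n}\bigr)$, not $O(n^{-1/2})$, so the multiplicative error you would actually obtain is $1 + L\sqrt{k/n}$ rather than the $1 + Ln^{-1/2}$ stated in the lemma. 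More importantly, when $k$ is a constant fraction of $n$ (the regime where the $\sqrt{1-k/n}$ scaling is informative), this error term is $\Theta(1)$ and the route collapses: you cannot deduce $\|e\|^2 \approx (1-k/n)\|\tmd^\star\|^2$ from a Gram-matrix bound that only says the singular values of $\A$ lie in $[\,n(1-c), n(1+c)\,]$ for constant $c$. The paper avoids this entirely by invoking a result of Dereziński et al.\ on the \emph{expected residual projection matrix}: $\mathbb{E}[\bar P_k] \preceq (1 + L/\sqrt{n})(1 - k/n)\I$, evaluated in quadratic form at the \emph{fixed} vector $\tmd^\star$, followed by Markov. The crucial insight you are missing is that one does not need uniform control of $(\A^\top\A)^{-1}$ — only the average behavior of the residual projection applied to a single deterministic vector — and that precise-in-expectation result is what makes the $1/\sqrt{n}$ error factor and the full range $k \leq n$ both attainable.
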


\begin{lem}[Approximate optimality, large $k$]\label{lem:3}
  
  Under the same conditions as \tmtextbf{Lemma \ref{lem:2}}, there exist universal constants $s, S > 0$ such that at least with probability 
  $0.9$,
  \[ \tau_k^\star \geq \tau^{\star} - 2 \sqrt{\tfrac{\log (60 s n)}{S}} \tfrac{\|
     \tmd^{\star} \|}{\lambda_{\min} ( \M ) \sqrt{k}} . \]
\end{lem}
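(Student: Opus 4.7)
The plan is to exhibit an explicit random vector $\tilde{\tmd}\in\mathcal{S}_k$ approximating the least-norm optimum $\tmd^\star$ in coordinate-wise sup-norm with high probability, and then convert this closeness into a feasibility certificate for \eqref{eqn:odp-subspace} by using the sensitivity of its two LMI constraints together with the freedom to shift along $\mathbf{1}\in\mathcal{S}_k$.

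First, normalize: by absorbing scale and mean into the $\mathbf{1}$-component of $\mathcal{S}_k$ and into the universal constants $s,S$, one may assume the subgaussian sampling distribution satisfies $\mathbb{E}[\tmd_i]=\mathbf{0}$ and $\mathbb{E}[\tmd_i\tmd_i^\top]=\I$. Define the empirical projection estimator
\[
\tilde{\tmd} \assign \frac{1}{k}\sum_{i=1}^k \langle\tmd^\star,\tmd_i\rangle\, \tmd_i \in \tmop{span}\{\tmd_1,\ldots,\tmd_k\}\subseteq\mathcal{S}_k,
\]
so that $\mathbb{E}[\tilde{\tmd}] = \tmd^\star$. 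For each $j\in[n]$, write $\tilde{\tmd}_j - \tmd^\star_j = \frac{1}{k}\sum_{i=1}^k(\xi_{i,j} - \mathbb{E}\xi_{i,j})$ with $\xi_{i,j} \assign \langle\tmd^\star,\tmd_i\rangle\,\tmd_{i,j}$. Each summand is a product of two subgaussian random variables of subgaussian norms $O(\|\tmd^\star\|)$ and $O(1)$, hence sub-exponential with parameter $O(\|\tmd^\star\|)$. Bernstein's inequality for sub-exponential sums, combined with a union bound over $j\in[n]$ at level $\epsilon \assign \sqrt{\log(60sn)/S}\,\|\tmd^\star\|/\sqrt{k}$ (with $s$ chosen so that the total failure probability is at most $0.1$), yields with probability at least $0.9$
\[
\|\tilde{\tmd} - \tmd^\star\|_\infty \leq \epsilon.
\]

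Next, convert this into feasibility of \eqref{eqn:odp-subspace}. Since $\mathbf{1}\in\mathcal{S}_k$, the shifted vector $\tilde{\tmd}' \assign \tilde{\tmd} - \epsilon\mathbf{1}$ still lies in $\mathcal{S}_k$ and satisfies the diagonal sandwich $\diagm(\tmd^\star) - 2\epsilon\,\I \preceq \diagm(\tilde{\tmd}') \preceq \diagm(\tmd^\star)$. Setting $\tilde{\tau}\assign \tau^\star - 2\epsilon/\lambda_{\min}(\M)$ and using $\M\succeq\lambda_{\min}(\M)\,\I$, both LMI constraints $\M\tilde{\tau}\preceq\diagm(\tilde{\tmd}')$ and $\diagm(\tilde{\tmd}')\preceq\M$ follow from the corresponding constraints at $(\tau^\star,\tmd^\star)$: the upper constraint is inherited directly from $\diagm(\tilde{\tmd}')\preceq\diagm(\tmd^\star)\preceq\M$, and the lower constraint follows because $\diagm(\tilde{\tmd}')\succeq\diagm(\tmd^\star) - 2\epsilon\,\I \succeq \M\tau^\star - 2\epsilon\,\I \succeq \M\tilde{\tau}$. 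Hence $(\tilde{\tau}, \tilde{\tmd}')$ is feasible for \eqref{eqn:odp-subspace}, which gives $\tau_k^\star \geq \tilde{\tau}$, exactly the claimed inequality.

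The main obstacle is recovering the sharp $1/\sqrt{k}$ scaling. A naive approach controlling $\|\tilde{\tmd}-\tmd^\star\|_2$ via matrix operator-norm concentration of $\frac{1}{k}\sum_i\tmd_i\tmd_i^\top - \I$ would introduce an extra $\sqrt{n/k}$ factor and destroy the rate. The argument must instead proceed coordinate-wise through Bernstein, exploiting the crucial fact that the operator norm of a diagonal matrix equals the $\ell_\infty$ norm of its diagonal, so the union bound only contributes the mild $\sqrt{\log n}$ term that appears in the statement.
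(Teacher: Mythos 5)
Your proposal matches the paper's proof in all essentials: your estimator $\tilde{\tmd}=\tfrac{1}{k}\sum_i\langle\tmd^\star,\tmd_i\rangle\tmd_i$ is (up to the normalization of the sampling distribution) the same $\mathcal{D}\mathcal{D}^\top\tmd^\star$ used in the paper, both arguments establish $\|\tilde{\tmd}-\tmd^\star\|_\infty \lesssim \|\tmd^\star\|\sqrt{\log n/k}$ via coordinate-wise sub-exponential concentration plus a union bound over $n$ coordinates, and both then shift along $\mathbf{1}$ and relax $\tau$ by $2\epsilon/\lambda_{\min}(\M)$ to produce a feasible point of \eqref{eqn:odp-subspace}. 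You spell out the Bernstein step a bit more explicitly than the paper, which invokes it as ``the property of random projection,'' but the structure and route are the same.
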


Combining these two results, we get the bound on $\kappa_k^{\star}$.
\begin{thm} \label{thm:2}
  Let $\kappa^\star_k$ be the optimal condition number given randomized subspace
  $\mathcal{S}_k$. We have, with probability at least $0.8$, that
  \[ \kappa^\star_k \leq \min \Bigg\{ \tfrac{\kappa^{\star}}{1 - G_1 \kappa^{\star}
     \sqrt{\frac{\log n}{k}}}, \tfrac{\kappa^{\star}}{1 - G_2 \kappa^{\star}
     \sqrt{1 - \frac{k}{n}}}, \kappa ( \M ) \Bigg\}, \]
  for all $n \geq k \geq \max \{ n - \frac{n}{(G_2 \kappa^{\star})^2}, (G_1
  \kappa^{\star})^2 \log n \}$, where $G_1, G_2 > 0$ are universal
  constants.
\end{thm}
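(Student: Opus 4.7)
The plan is to combine Lemmas \ref{lem:2} and \ref{lem:3} through a union bound, invert the resulting additive lower bounds on $\tau_k^\star$ to obtain multiplicative upper bounds on $\kappa_k^\star = 1/\tau_k^\star$, and then incorporate the deterministic bound $\kappa_k^\star \leq \kappa(\M)$ via the Corollary to Theorem \ref{thm:1}.

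First, I would take a union bound over the two events in Lemmas \ref{lem:2} and \ref{lem:3}, each of probability at least $0.9$. The probability that both bounds on $\tau_k^\star$ hold simultaneously is therefore at least $1 - 0.1 - 0.1 = 0.8$, matching the theorem.

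Second, on that joint event I would convert each additive lower bound $\tau_k^\star \geq \tau^\star - \epsilon$ into a multiplicative upper bound on $\kappa_k^\star$. Using $\tau^\star = 1/\kappa^\star$ and assuming $\kappa^\star\epsilon < 1$, we have
\[
\kappa_k^\star \;=\; \frac{1}{\tau_k^\star} \;\leq\; \frac{1}{\tau^\star - \epsilon} \;=\; \frac{\kappa^\star}{1 - \kappa^\star \epsilon}.
\]
Applying this to the error term from Lemma \ref{lem:3} of order $\sqrt{\log n / k}\cdot\|\tmd^\star\|/\lambda_{\min}(\M)$ yields the first bound in the theorem, while applying it to the error from Lemma \ref{lem:2} of order $\sqrt{1 - k/n}\cdot\|\tmd^\star\|/\lambda_{\min}(\M)$ yields the second. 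The factor $\|\tmd^\star\|/\lambda_{\min}(\M)$ together with $L, s, S$ and the absolute numerical prefactors are absorbed into the new constants $G_1, G_2$. The hypothesis $k \geq \max\{(G_1\kappa^\star)^2\log n,\ n - n/(G_2\kappa^\star)^2\}$ is exactly what is needed to guarantee $\kappa^\star\epsilon < 1$ in both cases, so that the denominators $1 - G_i\kappa^\star(\cdot)$ are strictly positive and the bounds are non-vacuous.

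Third, I would add the deterministic bound $\kappa_k^\star \leq \kappa(\M)$ to the minimum. Since the randomized subspace is defined as $\mathcal{S}_k = \operatorname{span}\{\mathbf{1}, \tmd_1, \ldots, \tmd_k\}$, the all-ones vector always lies in $\mathcal{S}_k$, so by the Corollary to Theorem \ref{thm:1} (equivalently Remark \ref{rem:1}) the solution to \eqref{eqn:odp-subspace} dominates the trivial preconditioner $\D = \mathbf{I}$, yielding $\kappa_k^\star \leq \kappa(\M)$ deterministically. Taking the minimum of the three bounds completes the proof.

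The hard part is the clean absorption of $\|\tmd^\star\|/\lambda_{\min}(\M)$ into a single universal constant. The SDP constraints $\tau^\star\M \preceq \diagm(\tmd^\star) \preceq \M$ only pin down $d_i^\star$ entrywise to a wide interval, and a naive bound gives $\|\tmd^\star\|/\lambda_{\min}(\M) \lesssim \sqrt{n}\,\kappa(\M)$, which is not a universal constant. Justifying the claimed form therefore requires invoking the least-norm property of $\tmd^\star$ and trading the resulting $\kappa^\star$-dependence back against the $\kappa^\star$ already present in the theorem's denominator; this is the step that truly determines whether the constants $G_1, G_2$ can be taken to be absolute or must also encode problem-dependent information.
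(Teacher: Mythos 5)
Your proof follows essentially the same route as the paper's: union bound over the two lemmas (giving probability $0.8$), invert $\tau_k^\star \geq \tau^\star - \epsilon$ into $\kappa_k^\star \leq \kappa^\star / (1 - \kappa^\star \epsilon)$, and append the deterministic bound $\kappa_k^\star \leq \kappa(\M)$ (which the paper invokes as $\tau_k^\star \geq \kappa(\M)^{-1}$ since $\mathbf{1} \in \mathcal{S}_k$). The paper's own proof is three lines and does exactly this, absorbing $\|\tmd^\star\|/\lambda_{\min}(\M)$ together with $L$, $s$, $S$ and the numerical prefactors into $G_1, G_2$ with no further comment. The concern you raise in your final paragraph is a genuine one about the \emph{paper's} claim rather than your proof: the quantity $\|\tmd^\star\|/\lambda_{\min}(\M)$ is problem-dependent (a crude entrywise bound from $\diagm(\tmd^\star) \preceq \M$ gives something of order $\sqrt{n}\,\kappa(\M)$, not $\mathcal{O}(1)$), and the additional factors $\sqrt{\log(60sn)}/\sqrt{\log n}$ and $\sqrt{1 + L n^{-1/2}}$ are only bounded, not constant, in finite $n$. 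The paper sweeps all of this into the phrase ``universal constants'' without justification, so if you write this up you should either flag that $G_1, G_2$ depend on $\|\tmd^\star\|/\lambda_{\min}(\M)$ or add an argument (which the paper does not provide) that controls this ratio absolutely. As a reproduction of the paper's argument your proof is faithful and complete.
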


An important implication of \tmtextbf{Theorem \ref{thm:2}} is that when $k$ increases from small to moderate values, expanding the subspace with more random diagonal matrices only \emph{marginally} contributes to reductions in condition number. Recall that the convergence of iterative methods typically has $\mathcal{O} ( {\kappa})$ dependency on the condition number. Therefore, even if the optimal condition number $\kappa^\star_k$ of \eqref{eqn:odp-subspace} satisfies $\kappa_k^\star \leq (1+\alpha) \kappa^{\star}$ for $\alpha = \mathcal{O}(1)$, the acceleration can still be 
significant. As we will illustrate in our experiments, we only need
$k = \mathcal{O} (1)$ to achieve this desirable improvement in
condition number.  \\

So far, we have focused on constructing the subspace $\mathcal{S}_k$ of basis matrices using heuristic or randomly generated matrices. However, there is still a challenge: to produce a useful preconditioner, a solution to \eqref{odp} should satisfy the SDP constraints since even tiny violations of those constraints will be magnified by the ill-conditioning of the original matrix $\M$. In practice, we find that matrices with condition numbers around $10^7$ require an absolute violation less than $10^{-10}$ to produce a useful preconditioner. However, even if $k = \mathcal{O}(1)$, most off-the-shelf methods are still not scalable for large $n$: interior point methods \cite{wolkowicz2012handbook} are robust and accurate, but they involve an $n \times n$ matrix decomposition in each iteration; the best first-order methods \cite{burer2003nonlinear, deng2022new, ding2023revisiting, ding2021optimal, o2016conic, yurtsever2021scalable, agrawal2020disciplined} are more scalable, but they are in general not accurate enough \cite{tu2014practical}. Moreover, in large problems, the matrix $\M$ itself may not be available and can only be accessed through matrix-vector products of the form $\M \mathbf{v}$. In view of these challenges, we propose an effective solution by leveraging an old branch of SDP research: semi-infinite programming (SIP) and interior point cutting plane methods \cite{sivaramakrishnan2002linear}. The cutting plane method is widely believed to be inefficient for general SDPs. However, as we will show in the next section and in numerical experiments, it turns out to be the most competitive method for the optimal preconditioning problem.

\section{A semi-infinite approach to the optimal preconditioning SDP} \label{sec:semiinf}
In this section, we improve the scalability of the SDP approach to optimal diagonal preconditioning using semi-infinite programming (SIP) and cutting plane methods. The idea is to approximate semi-definite conic constraints with a set of linear constraints, thereby avoiding explicitly solving the whole SDP. This is a second application of the general dimension reduction idea.\\

Given SDP conic constraint $\bs \succeq \0$, its SIP reformulation turns it into an infinite number of \emph{linear}
constraints $\langle \tmv, \bs \tmv \rangle \geq 0, \text{ for all $\|
   \tmv \| = 1$}$. Applying this reformulation to \eqref{eqn:odp-subspace} gives 
\begin{eqnarray} \label{eqn:sip}
  \maxf{\tau, \z} & \tau & \\
  \text{subject to~~~} & \textstyle \sum_{i = 1}^k \langle \tmv_1, \D_i \tmv_1
  \rangle z_i - \langle \tmv_1, \M \tmv_1 \rangle \tau
  \geq 0, & \text{for all~}  \| \tmv_1 \| = 1 \nonumber \\
  & \textstyle\sum_{i = 1}^k \langle \tmv_2, \D_i \tmv_2 \rangle z_i
  \leq \langle \tmv_2, \M \tmv_2 \rangle & \text{for all~}  \|
  \tmv_2 \| = 1 \nonumber.
\end{eqnarray}
Using the SIP formulation, we can reduce the SDP \eqref{eqn:odp-subspace} to a linear program (LP) of $\mathcal{O}(k)$ variables
and an infinite number of constraints. In practice, it is impossible to enforce all the constraints. However, given a candidate solution $( \tau', \z')$, we know that an extremal eigenvalue separation oracle exists for checking whether the SDP constraints are satisfied, and if not, the oracle gives an separating constraint. Take the first block of constraints as
an example. We can perform the extremal eigenvalue computation to determine if
$\lambda_{\min} ( \sum_{i = 1}^k \D_i z_i' - \M \tau'
) \geq 0$. If not, this oracle returns a separating hyperplane parameterized by $\tmv' \in \mathbb{R}^n$
such that
\[ \langle \tmv', ( \textstyle\sum_{i = 1}^k \D_i z_i' - \M
   \tau ) \tmv' \rangle = \sum_{i = 1}^k \langle \tmv', \D_i
   \tmv' \rangle z_i' - \langle \tmv', \M \tmv' \rangle
   \tau' < 0. \]
   
By adding to LP the new constraint $\sum_{i = 1}^k \langle \tmv', \D_i
   \tmv' \rangle z_i - \langle \tmv', \M \tmv' \rangle
   \tau \geq 0$ parameterized by $\tmv'$, we cut $( \tau', \z')$ from the feasible region. We then repeat this process and compute an increasing number of linear constraints (cutting planes) that successively improve the approximation of the SDP feasible region. In practice, we start from a finite set of cutting planes parameterized by $\mathcal{V}^t_1, \mathcal{V}^t_2$, and solve the LP problem
   \begin{eqnarray} 
   \label{eq:finite-SIP}
  \maxf{\tau, \z} & \tau & \\
  \text{subject to~~~} & \textstyle\sum_{i = 1}^k \langle \tmv_1, \D_i \tmv_1
  \rangle z_i - \langle \tmv_1, \M \tmv_1 \rangle \tau
  \geq 0, & \text{for all~}   \tmv_1 \in \mathcal{V}^t_1, \nonumber \\
  & \textstyle\sum_{i = 1}^k \langle \tmv_2, \D_i \tmv_2 \rangle z_i 
  \leq \langle \tmv_2, \M \tmv_2 \rangle & \text{for all~} \tmv_2 \in \mathcal{V}^t_2 \nonumber.
\end{eqnarray}
After obtaining the optimal solution $( \tau^t, \z^t )$ to the finite LP \eqref{eq:finite-SIP}, we invoke the separation oracle. 
If the oracle does not return a separating hyperplane (up to some tolerance $\varepsilon$), it certifies that $( \tau^t, \z^t)$ is optimal for \eqref{eqn:sip}, therefore \eqref{eqn:odp-subspace}. On the other hand, if the oracle returns a separating hyperplane, we add the associated cutting plane to the finite LP and iterate till convergence. We summarize this procedure in \textbf{Algorithm \ref{alg:1}}.

\begin{algorithm}[h]\label{alg:1}
\KwIn{Initial constraint set $\mathcal{V}^1_1, \mathcal{V}^1_2$, tolerance parameter $\varepsilon$, maximum iteration $T$\;}
\SetKwRepeat{Do}{do}{while}
\For{$t = 1,\ldots,T$}{\

	\textbf{solve}  \eqref{eq:finite-SIP} with constraints defined by $\mathcal{V}^t_1, \mathcal{V}^t_2$ and obtain solution $(\tau^t, \z^t)$
	
	\textbf{compute $\lambda_{\min}(\sum_{i = 1}^k \D_i z_i^t - \M \tau^t)$} and eigenvector $\tmv_1^t$\\
	\textbf{compute $\lambda_{\min}(\M - \sum_{i = 1}^k \D_i z_i^t )$} and eigenvector $\tmv_2^t$
	
	\If{$\lambda_{\min}(\sum_{i = 1}^k \D_i z_i^t  - \M \tau^t) \leq -\varepsilon$}{
		$\mathcal{V}^{t + 1}_1 = \mathcal{V}^t_1 \cup \{\tmv_1^t\}$
	}
	\If{$\lambda_{\min}(\M - \sum_{i = 1}^k \D_i z_i^t ) \leq -\varepsilon$}{
		$\mathcal{V}^{t + 1}_2 = \mathcal{V}^t_2 \cup \{\tmv_2^t\}$
	}    
}

\caption{Cutting plane approach for \eqref{eqn:odp-subspace}}
\end{algorithm}
We make several remarks on the practical implementation of \textbf{Algorithm \ref{alg:1}}.
\begin{rem}
In practice, it is not necessary to compute the exact extremal eigenvector for a separation oracle. It is sufficient to find a direction $\tmv$ such that the corresponding quadratic form is negative.
\end{rem}
\begin{rem}
We can adopt an iterative extremal eigenvalue routine, such as the Lanczos method, as the separation oracle. As we observe in the experiments, an iterative oracle often takes $\mathcal{O} (\mathsf{nnz}(\M)) $ time. More importantly, iterative oracles only assume the availability of matrix-vector products. As a result, our method is applicable even if we only have access to $\M$ through black-box matrix-vector operations.
\end{rem}
Next, we analyze the complexity of \textbf{Algorithm \ref{alg:1}}.

\begin{lem} \label{lem:4}
  {\cite{krishnan2003properties}} Assume $\mathbf{1} \in \mathcal{S}_k$ and that each LP subproblem is solved to $\varepsilon$-accuracy. 
  The interior point cutting plane method requires
$K =\mathcal{O} ( (k^2 \mathcal{T}_S + k^4) \log ( \tfrac{\| \M
     \|}{\varepsilon} ) )$
  arithmetic operations to output an $\varepsilon$-feasible solution $(\hat{\tau},\hat{\z})$ such
  that
  \[ \textstyle\sum_{i = 1}^k \D_i \hat{z}_i - \M \hat{\tau} \succeq - \varepsilon
     \cdummy \I \qquad \M - \sum_{i = 1}^k \D_i \hat{z_i}  \succeq -
     \varepsilon \cdummy \I, \]
     where $\mathcal{T}_S$ is the cost of each separation oracle.
\end{lem}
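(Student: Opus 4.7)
The plan is to reduce the claim to the standard complexity result for interior point cutting plane methods (equivalently, analytic center cutting plane methods) applied to a convex body in $\mathbb{R}^{k+1}$, with \eqref{eq:finite-SIP} cast as a sequence of feasibility subproblems in the variables $(\tau,\z)$. Concretely, I would first fix a target value $\hat\tau$ and consider the convex feasibility set $\mathcal{F}(\hat\tau)=\{\z:\sum_i \D_i z_i\succeq \M\hat\tau,\ \M\succeq\sum_i\D_i z_i\}$, so that optimizing $\tau$ reduces to a binary search over $\mathcal{F}(\hat\tau)\ne\varnothing$. Each membership test is implemented by the separation oracle, which either returns $\tmv$ producing a violated linear inequality or certifies $\varepsilon$-feasibility.

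Next, I would establish the geometric parameters needed by the cutting plane complexity bound: an inner ball radius $r$ and an outer ball radius $R$ for the $\varepsilon$-feasibility set in $\mathbb{R}^{k+1}$. This is where the assumption $\mathbf{1}\in\mathcal{S}_k$ is crucial. Writing $\mathbf{1}=\sum_i \D_i \bar{z}_i$, the point $\bar{\z}$ together with $\bar\tau=\lambda_{\min}(\M)/\lambda_{\max}(\M)$ is strictly feasible by \textbf{Theorem~\ref{thm:optprecond}}, and small perturbations stay feasible in $\varepsilon$-tolerance, giving $r=\Omega(\varepsilon/\|\M\|)$. For the outer radius, the constraint $\sum_i \D_i z_i\preceq \M$ forces $\|\z\|=\mathcal{O}(\mathrm{poly}(\|\M\|))$ and $\tau\le 1$, giving $R=\mathcal{O}(\|\M\|)$. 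Hence $\log(R/r)=\mathcal{O}(\log(\|\M\|/\varepsilon))$.

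Then I would invoke the classical complexity result for the interior point cutting plane method (Atkinson--Vaidya / Mitchell--Todd, restated in \cite{krishnan2003properties}): on a convex body in $\mathbb{R}^{d}$ with inner/outer radii $r,R$, it attains $\varepsilon$-feasibility in $\mathcal{O}(d^2\log(R/r))$ outer iterations, each consisting of one separation oracle call plus $\mathcal{O}(d^2)$ work to restore the analytic center via rank-one Hessian updates and a bounded number of Newton steps. Substituting $d=k+1$ and our radius bounds gives $\mathcal{O}(k^2\log(\|\M\|/\varepsilon))$ iterations, each costing $\mathcal{T}_S+\mathcal{O}(k^2)$, so the total arithmetic cost is $\mathcal{O}((k^2\mathcal{T}_S+k^4)\log(\|\M\|/\varepsilon))$, matching the claim.

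The main obstacle is the geometric characterization in the second step: certifying that $\mathbf{1}\in\mathcal{S}_k$ actually yields an inner ball of radius polynomial in $\varepsilon/\|\M\|$, and that the Dikin-type updates on the growing linear system can indeed be maintained in $\mathcal{O}(k^2)$ per iteration rather than the naive $\mathcal{O}(k^3)$. Once those quantitative estimates are in place, the rest is a direct appeal to the black-box ACCPM complexity result.
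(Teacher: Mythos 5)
The paper's own proof is a one-line citation: the claim is \textbf{Theorem~3} of \cite{krishnan2003properties}, and the only thing the paper adds is the observation that the $\varepsilon$-accurate LP oracle assumption matches the hypotheses of that theorem. Your proposal reconstructs what sits inside the cited result, and the skeleton --- work in $\mathbb{R}^{k+1}$, use $\mathbf{1}\in\mathcal{S}_k$ to secure an inner ball, invoke the standard ACCPM iteration count $\mathcal{O}(d^2\log(R/r))$ with an amortized $\mathcal{O}(d^2)$ centering cost per cut --- is essentially the route that reference follows, so this is the same approach carried out in more detail rather than a genuinely different one.

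One quantitative step needs a correction. With $\sum_i \D_i \bar z_i = \I$, the point $(\bar\tau,\bar\z)$ with $\bar\tau = \lambda_{\min}(\M)/\lambda_{\max}(\M)$ is \emph{not} feasible in general: the block $\sum_i\D_i z_i \preceq \M$ would require $\I\preceq\M$, i.e.\ $\lambda_{\min}(\M)\ge 1$, which need not hold. You must rescale, taking $\z'=\lambda_{\min}(\M)\bar\z$ so that $\sum_i\D_i z_i' = \lambda_{\min}(\M)\I\preceq\M$, and then $\M\bar\tau = \lambda_{\min}(\M)\M/\lambda_{\max}(\M)\preceq\lambda_{\min}(\M)\I$ confirms feasibility; the inner-ball radius argument is unaffected and still yields $r=\Omega(\varepsilon/\|\M\|)$. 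The remaining obstacle you flag --- that the analytic center can be maintained in $\mathcal{O}(k^2)$ rather than $\mathcal{O}(k^3)$ per added cut --- is exactly what the cited reference establishes via rank-one Hessian updates, so since the paper treats that theorem as a black box, you do not need to re-derive it; it simply should be cited rather than left as an open gap.
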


\prove{
\begin{proof}
	Now that each LP is solved to accuracy $\varepsilon$, \textbf{Theorem 3} of \cite{krishnan2003properties} gives us the result.
\end{proof}
}
The next result shows we can turn an $\varepsilon$-feasible solution into an approximate optimal SDP solution.

\begin{lem}
  \label{lem:5} Under the same condition as \textbf{Lemma \ref{lem:4}}, given an $\varepsilon$-feasible solution to \eqref{eqn:odp-subspace}, we can get a feasible solution that achieves an objective value $\tau' = \tau_k^{\star} - \frac{2 \varepsilon}{\lambda_{\min} ( \M
     )}$. Moreover, we can extract a preconditioner that gives a condition number no larger than $1 / \tau'$.
\end{lem}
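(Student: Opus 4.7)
The plan is to round $(\hat\tau,\hat\z)$ into an \emph{exactly} feasible point for \eqref{eqn:odp-subspace} by shifting the preconditioner and slightly decreasing the objective. Set $\D' := \sum_{i=1}^k \D_i \hat z_i$, so that by $\varepsilon$-feasibility $\D' - \M\hat\tau \succeq -\varepsilon\I$ and $\M - \D' \succeq -\varepsilon\I$. The hypothesis $\mathbf{1}\in \mathcal{S}_k$ from \textbf{Lemma \ref{lem:4}} is what makes the rounding work: it implies $\I = \diagm(\mathbf{1})$ lies in $\tmop{span}\{\D_1,\ldots,\D_k\}$, so adding any multiple of $\I$ to $\D'$ keeps the perturbed preconditioner inside $\mathcal{S}_k$.

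I would first absorb the violation in the upper constraint by defining $\tilde\D := \D' - \varepsilon \I$, which lies in $\mathcal{S}_k$ and satisfies $\M - \tilde\D = (\M - \D') + \varepsilon\I \succeq 0$. This shift worsens the lower constraint by $\varepsilon\I$, so the combined violation against $\M\hat\tau$ is now at most $2\varepsilon\I$. I would then set $\tau' := \hat\tau - \tfrac{2\varepsilon}{\lambda_{\min}(\M)}$; since $\M \succeq \lambda_{\min}(\M)\I$, the gained slack $\M(\hat\tau - \tau')$ dominates $2\varepsilon \I$, so a one-line computation gives $\tilde\D - \M\tau' \succeq 0$. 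Hence $(\tau',\z')$, with $\z'$ encoding $\tilde\D$ in the basis $\{\D_i\}$, is feasible for \eqref{eqn:odp-subspace}.

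For the objective bound, I would use that the cutting plane LP \eqref{eq:finite-SIP} is a relaxation of \eqref{eqn:odp-subspace}, so its optimum upper-bounds $\tau_k^\star$ and in particular $\hat\tau \geq \tau_k^\star$. Combining with the construction yields $\tau' \geq \tau_k^\star - \tfrac{2\varepsilon}{\lambda_{\min}(\M)}$ as claimed. To extract the preconditioner, I would note that feasibility gives $\M\tau' \preceq \tilde\D \preceq \M$; conjugating by $\tilde\D^{-1/2}$ produces $\tau'\I \preceq \tilde\D^{-1/2}\M\tilde\D^{-1/2} \preceq \I$, so $\tilde\D$ achieves a condition number of at most $1/\tau'$.

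The main obstacle I anticipate is repairing \emph{both} violated semidefinite constraints simultaneously while staying inside the subspace $\mathcal{S}_k$. The argument works because the two perturbations act on different variables (one on $\D$, one on $\tau$) and the hypothesis $\mathbf{1}\in\mathcal{S}_k$ lets the identity shift be realized in the basis. Without that hypothesis, one could no longer add $\pm\varepsilon\I$ freely and would need a multiplicative rescaling that simultaneously tracks both extreme eigenvalues of $\D'^{-1/2}\M\D'^{-1/2}$, leading to a messier bound.
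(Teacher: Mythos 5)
Your proof is correct and follows essentially the same route as the paper's: shift the preconditioner by $-\varepsilon\mathbf{I}$ (realizable inside $\mathcal{S}_k$ thanks to $\mathbf{1}\in\mathcal{S}_k$), shift $\tau$ by $-2\varepsilon/\lambda_{\min}(\M)$, and verify both SDP blocks become feasible. You make two small points explicit that the paper glosses over: (i) $\hat\tau \geq \tau_k^\star$ because the finite cutting-plane LP is a relaxation of the SIP, which is what connects the constructed $\tau' = \hat\tau - 2\varepsilon/\lambda_{\min}(\M)$ to the claimed objective value (the paper silently identifies $\hat\tau$ with $\tau_k^\star$); and (ii) the condition-number extraction via $\M\tau' \preceq \tilde\D \preceq \M$ and conjugation by $\tilde\D^{-1/2}$, which is cleaner than the paper's auxiliary SDP. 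Same argument, slightly tighter bookkeeping.
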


\prove{
\begin{proof}

Now that we assume $\1 \in \mathcal{S}_k$, without loss of generality we
assume $\sum_{i = 1}^k \D_i a_i = \I$.  Suppose we arrive at $(
\hat{\tau}, \hat{\z} )$ such that
\begin{align}
  \textstyle\M - \sum_{i = 1}^k \D_i \hat{z}_i & \succeq - \varepsilon \cdummy \I
  \nonumber\\
  \textstyle  \sum_{i = 1}^k \D_i \hat{z}_i - \M \hat{\tau} & \succeq - \varepsilon
  \cdummy \I . \nonumber
\end{align}
Then we take $( \tau', \z' ) = ( \hat{\tau} - \tfrac{2
\varepsilon}{\lambda_{\min} ( \M )}, \hat{\z} - \varepsilon
\tma )$ and deduce that
\begin{align}
 \textstyle \M - \sum_{i = 1}^k \D_i z'_i ={} & \M - \textstyle\sum_{i = 1}^k \D_i z'_i \nonumber\\
  ={} & \M -  \textstyle\sum_{i = 1}^k \D_i (\hat{z}_i - \varepsilon a_i) \nonumber\\
  ={} & \M - \textstyle \sum_{i = 1}^k \D_i \hat{z}_i + \textstyle( \sum_{i = 1}^k \D_i a_i
  ) \varepsilon \nonumber\\
  ={} & \M - \textstyle\sum_{i = 1}^k \D_i \hat{z}_i + \varepsilon \cdummy \I \label{proof-1} \\
  \succeq{} & \0, \nonumber
\end{align}
where \eqref{proof-1} uses the assumption that $\sum_{i = 1}^k \D_i a_i = \I$.
On the other hand,
\begin{align}
  \textstyle\sum_{i = 1}^k \D_i z'_i - \M \tau' ={} & \textstyle\sum_{i = 1}^k \D_i (\hat{z}_i -
  \varepsilon a_i) - \M ( \hat{\tau} - \frac{2
  \varepsilon}{\lambda_{\min} ( \M )} ) \nonumber\\
  ={} & \textstyle\sum_{i = 1}^k \D_i \hat{z}_i - \M \hat{\tau} - \varepsilon \cdummy \I +
  \frac{2 \varepsilon}{\lambda_{\min} ( \M )} \M \nonumber\\
  \succeq{} & - 2 \varepsilon \cdummy \I + \tfrac{2 \varepsilon}{\lambda_{\min}
   ( \M )} \M \nonumber \\
  \succeq{} & \0 \label{proof-2},
\end{align}

where \eqref{proof-2} uses the fact $\frac{1}{\lambda_{\min}}\M \succeq \I$. Therefore, $( \tau', \z')$ is a feasible solution to the original
problem. Now fix $\hat{\tmd} = \D \hat{\z}'$ and let $\hat{\D} := \text{diagm}(\hat{\tmd})$. We know that $(\hat{\tau}, 1)$ is a feasible solution to the following SDP
\begin{eqnarray*}
  \maxf{\tau, z} & \tau 
  ~~~~\text{subject to~~~} & \M - \hat{\D} z \succeq \I,~~
   \hat{\D} z - \M \tau \succeq \0. 
\end{eqnarray*}
This SDP computes $\kappa(\hat{\D}^{-1/2}\M\hat{\D}^{-1/2})$. By optimality $\kappa(\hat{\D}^{-1/2}\M\hat{\D}^{-1/2})\leq 1/\tau'$ this completes the proof.
\end{proof}
}
Finally, we combine the previous results to provide the overall complexity of the cutting plane approach. 

\begin{thm} \label{thm:3}
Given $\delta \in (0, 1)$ and $k =\mathcal{O} (1)$, it takes
$$K =\mathcal{O} \Big(\mathcal{T}_S\log \big( \delta^{-1}     \kappa(\M) \big) \Big)$$
  arithmetic operations to find an approximate optimal preconditioner in $\mathcal{S}_k$ with condition number $\kappa'$, such that
  $\kappa' \leq (1 + \delta) \kappa^{\star}_k$. In other words, for $\delta = \mathcal{O}(1)$, we only need
  $\mathcal{O} (\mathcal{T}_S)$ arithmetic operations to compute an approximate optimal preconditioner in $\mathcal{S}_k$.
\end{thm}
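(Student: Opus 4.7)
The plan is to combine Lemma \ref{lem:4} and Lemma \ref{lem:5} by choosing the feasibility tolerance $\varepsilon$ small enough that the preconditioner extracted from the $\varepsilon$-feasible solution achieves the target relative accuracy $(1+\delta)\kappa_k^\star$.

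First, I would translate the accuracy requirement on $\kappa'$ into a requirement on $\varepsilon$. Lemma \ref{lem:5} guarantees that from any $\varepsilon$-feasible solution one can build a preconditioner with condition number at most $1/\tau'$, where $\tau' = \tau_k^\star - \tfrac{2\varepsilon}{\lambda_{\min}(\M)}$. Asking $1/\tau' \leq (1+\delta)/\tau_k^\star = (1+\delta)\kappa_k^\star$ is equivalent to $\tau_k^\star \leq (1+\delta)\tau'$, which rearranges to
\[
\varepsilon \;\leq\; \frac{\delta}{2(1+\delta)}\,\tau_k^\star\, \lambda_{\min}(\M) \;=\; \frac{\delta}{2(1+\delta)}\cdot \frac{\lambda_{\min}(\M)}{\kappa_k^\star}.
\]
So it suffices to pick $\varepsilon$ of order $\delta\,\lambda_{\min}(\M)/\kappa_k^\star$.

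Next, I would substitute this choice into the complexity bound of Lemma \ref{lem:4}. With $k=\mathcal{O}(1)$, the leading term $k^2\mathcal{T}_S + k^4$ collapses to $\mathcal{O}(\mathcal{T}_S)$ (assuming $\mathcal{T}_S$ dominates the $\mathcal{O}(1)$ arithmetic), and Lemma \ref{lem:4} gives
\[
K \;=\; \mathcal{O}\!\left(\mathcal{T}_S \log\!\frac{\|\M\|}{\varepsilon}\right)
\;=\; \mathcal{O}\!\left(\mathcal{T}_S \log\!\frac{\|\M\|\,\kappa_k^\star}{\delta\,\lambda_{\min}(\M)}\right)
\;=\; \mathcal{O}\!\left(\mathcal{T}_S \log\!\frac{\kappa(\M)\,\kappa_k^\star}{\delta}\right).
\]
Using the assumption $\mathbf{1}\in\mathcal{S}_k$ together with Theorem \ref{thm:1} (or the trivial feasible point $\z=\e_1$ when $\tmd_1 = \mathbf{1}$), we have $\kappa_k^\star \leq \kappa(\M)$, so $\log(\kappa(\M)\kappa_k^\star/\delta) = \mathcal{O}(\log(\kappa(\M)/\delta))$, yielding the claimed bound $K = \mathcal{O}(\mathcal{T}_S\log(\delta^{-1}\kappa(\M)))$. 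Finally, the corresponding preconditioner is obtained by extracting $\hat{\D} = \diagm(\sum_i \D_i z'_i)$ from the shifted solution $(\tau',\z')$ as in Lemma \ref{lem:5}.

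The only mildly delicate step is justifying that $\varepsilon$ can be chosen a priori even though $\kappa_k^\star$ appears in the bound. The cleanest resolution is to bound $\kappa_k^\star \leq \kappa(\M)$ and take $\varepsilon = c\,\delta\,\lambda_{\min}(\M)/\kappa(\M)$ for an absolute constant $c>0$; this is conservative but preserves the logarithmic dependence because $\log(\kappa(\M)^2/\delta) = \mathcal{O}(\log(\kappa(\M)/\delta))$. Everything else is routine bookkeeping on top of the two lemmas.
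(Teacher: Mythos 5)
Your proposal is correct and follows essentially the same route as the paper's proof: combine Lemma \ref{lem:4} with Lemma \ref{lem:5}, choose the feasibility tolerance $\varepsilon$ on the order of $\delta\,\lambda_{\min}(\M)\,\tau_k^\star$, and use $\kappa_k^\star\leq\kappa(\M)$ (together with $\kappa(\M)=\|\M\|/\lambda_{\min}(\M)$) to collapse the log factor. The paper works in the other direction, fixing $K=2\mathcal{T}_S\log(\tfrac{4}{\delta}\kappa(\M))$ up front and verifying that the induced $\varepsilon=\tfrac{\delta}{4}\lambda_{\min}(\M)\tau_k^\star$ makes Lemma \ref{lem:5} yield $\tau'=(1-\tfrac{\delta}{2})\tau_k^\star\geq\tfrac{1}{1+\delta}\tau_k^\star$; your version solves for $\varepsilon$ from the target inequality $1/\tau'\leq(1+\delta)\kappa_k^\star$ and then back-substitutes into Lemma \ref{lem:4}, arriving at the same bound with the analogous constant $\tfrac{\delta}{2(1+\delta)}$. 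Your final remark about replacing the unknown $\kappa_k^\star$ with $\kappa(\M)$ to make $\varepsilon$ an a priori choice is a genuine (if minor) clarification that the paper glosses over.
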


\prove{
\begin{proof}
Given $k =\mathcal{O} (1)$, if we take $K = 2 \mathcal{T}_S \log ( \frac{4}{\delta} \kappa ( \M ) ) ) =\mathcal{O}
( \mathcal{T}_S \log ( \frac{4}{\delta} \kappa ( \M )
) )$, then
\[ K = 2\mathcal{T}_S \log \big( \tfrac{4}{\delta} \kappa ( \M )
   \big) \geq \mathcal{T}_S \log \big( \tfrac{4}{\delta} \kappa ( \M
   ) \kappa_k^{\star} \big) =\mathcal{T}_S \log \big(
   \tfrac{4}{\delta} \tfrac{\| \M \|}{\lambda_{\min} ( \M
   )} \kappa_k^{\star} \big) \]
and by \tmtextbf{Lemma \ref{lem:5}}, we correspondingly have $\varepsilon = \frac{\delta}{4}
\lambda_{\min} ( \M ) \tau_k^{\star}$, giving
\[ \tau' = \tau_k^{\star} - \tfrac{2}{\lambda_{\min} ( \M )} \cdummy
   \tfrac{\delta}{4} \lambda_{\min} ( \M ) \tau_k^{\star} = ( 1
   - \tfrac{\delta}{2} ) \tau_k^{\star} \geq \tfrac{1}{1 + \delta}
   \tau_k^{\star}, \]
where $1 - \frac{\delta}{2} \geq \frac{1}{1 + \delta}$ for $\delta \in (0,
1)$. Then we take the inverse on both sides and deduce that
\[ \kappa' = 1 / \tau' \leq \tfrac{1 + \delta}{\tau_k^{\star}} = (1 + \delta)
   \kappa_k^{\star} . \]
This completes the proof.
\end{proof}
}

Till now, we have established scalable and efficient subroutines to compute approximate optimal diagonal preconditioners using the simple yet powerful idea of dimension reduction in two distinct ways.  The first dimension reduction searches for an optimal preconditioner in a subspace of dimension $k$ instead of $n$. The second dimension reduction transforms the optimal preconditioning SDP into an LP. However, $\mathcal{S}_k$ is constructed somewhat ``exogenously'', as we need to specify the basis elements beforehand, either based on existing heuristic preconditioners or randomly generated matrices. This naturally motivates the following question:
\begin{center}
\textit{Does $\M$ \textbf{itself} contain information on how to construct a better preconditioner?}
\end{center}

In the next section, we answer this question affirmatively. We show that after computing $\kappa(\D^{-1/2}\M\D^{-1/2})$ for a valid preconditioner $\tmd$, we automatically obtain an improving direction for $\tmd$ that further reduces the condition number ``optimally'' in some sense. This observation allows us to design an efficient iterative procedure that outputs preconditioners that monotonically reduce the condition number at every iteration, eliminating the need to rely on exogenously generated basis matrices.

\section{Preconditioner optimization via column (matrix) generation}
\label{sec:colgen}

In this section, we show how to extract new preconditioners automatically
based on the solution to the current subspace optimal preconditioning SDP. Recall that the condition number
$\kappa ({\D}^{-1/2} \M{\D}^{-1/2} )$ can be estimated by computing the extremal eigenvalues $\lambda_{\max} ({\D}^{-1/2} \M{\D}^{-1/2}
), \lambda_{\min} ({\D}^{-1/2} \M{\D}^{-1/2} )$ and $\kappa ({\D}^{-1/2} \M{\D}^{-1/2}
) = \frac{\lambda_{\max} ({\D}^{-1/2} \M{\D}^{-1/2} )}{\lambda_{\min} ({\D}^{-1/2} \M{\D}^{-1/2}
)}$ (e.g. {\cite{urschel2021uniform}}). 
Our key observation is that the extremal eigen\emph{vectors}
provide information that can be used to improve  $\tmd$.
This section shows how to exploit this idea 
efficiently using LP column generation, semi-infinite duality, and another dimension
reduction idea.  We begin with SDP duality.

\subsection{Dual interpretation of optimal diagonal preconditioning}

Given the subspace $\mathcal{S}_k$ of preconditioners spanned by $\{\tmd_1,\dots,\tmd_k\}$, the dual problem of SDP \eqref{eqn:odp-subspace} is given by
\begin{equation}
\begin{array}{lrl}\label{eqn:dm}
\tag{DS}& \qquad \tau_k^{\star} = \min_{\X_1, \X_2} & \langle \M, \X_1 \rangle \\
&  \text{subject to} & \langle \M, \X_2 \rangle = 1 \\
&  & \langle \D_i, \X_1 - \X_2 \rangle = 0, i \in [k]\\
&  & \X_1, \X_2 \succeq \0, 
\end{array}
\end{equation}
where $\X_1$ and $\X_2$ are the dual variables for the two SDP constraints in  \eqref{eqn:odp-subspace}.
Strong duality holds since $( \X_1, \X_2
) = ( \frac{1}{n} \M^{- 1}, \frac{1}{n} \M^{- 1} )$ is a feasible solution satisfying Slater's condition. 
Recall that the optimal whole-space diagonal preconditioner is denoted by $\D^{\star}$. We make the following observations about the dual formulation to provide intuitions and help motivate our matrix generation method.

\begin{enumerate}[leftmargin=*,itemsep=2pt,label=\textbf{\arabic*)}]
\item The dual problem of \eqref{odp}, which we denote by \eqref{eqn:df}, is a special case of \eqref{eqn:dm} by taking $\D_i = \e_i \e_i^\top, i \in [n]$.
\begin{equation}
\begin{array}{lrl}\label{eqn:df}
\tag{DF}& \qquad  \minf{\X_1, \X_2} & \langle \M, \X_1 \rangle \\
&  \text{subject to} & \langle \M, \X_2 \rangle = 1 \\
&  & \langle \e_i \e_i^\top, \X_1 - \X_2 \rangle = 0, i \in [n]\\
&  & \X_1, \X_2 \succeq \0. 
\end{array}
\end{equation}

\item The condition $\langle \D^\star, \X_1 - \X_2 \rangle = 0$ on the dual solution $(\X_1, \X_2)$ guarantees that \eqref{eqn:dm} identifies the whole-space optimal diagonal preconditioner $\D^\star$. This is because when the condition holds, adding $\D^\star$ to the constraints in \eqref{eqn:dm} cannot improve its objective. Equivalently, adding $\tmd^\star$ to $\{\tmd_1,\dots,\tmd_k\}$ in \eqref{eqn:odp-subspace} cannot improve the resulting condition number, i.e., $\{\tmd_1,\dots,\tmd_k\}$ already spans $\tmd^\star$.

\item \eqref{eqn:df} imposes the
constraint $\tmop{diag} ( \X_1 - \X_2 ) = \0$, which is a sufficient condition for $\langle \D^\star, \X_1 - \X_2 \rangle = 0$. Hence \eqref{eqn:df} is guaranteed to identify the optimal preconditioner $\D^\star$. 

\item Although \eqref{eqn:df} enforces the sufficient condition $\tmop{diag} ( \X_1 - \X_2 ) = \0$ for recovering $\D^\star$, it is inefficient in the following sense. In \eqref{eqn:df}, we impose $\tmop{diag} ( \X_1 - \X_2
) = \0$ using $n$ linear constraints involving $\e_i \e_i^{\top}$, while in \eqref{eqn:dm}, each constraint $\langle \D_i, \X_1 - \X_2 \rangle =
\sum_{j = 1}^n d_{i j}
\langle  \e_j \e_j^{\top}, \X_1 - \X_2 \rangle = 0$ 
is a linear combination of the constraints in \eqref{eqn:df}. Adding the optimal diagonal preconditioner $\D^{\star}$ gives the strongest linear constraint and maximizes the primal objective. Of course, this is infeasible since $\D^\star$ is unknown.

\item Relatedly, in \eqref{eqn:df} all $n$ basis matrices $\e_i \e_i^\top$ are required to create a valid (positive definite) diagonal preconditioner, whereas for \eqref{eqn:dm}, even $k=1$ (i.e., a single preconditioner) yields a meaningful problem. For example, when $\mathcal{S}_1 = \{\1\}$, \eqref{eqn:dm} has objective $1 / \kappa(\M)$, which corresponds to computing the condition number of $\M$. Similarly, given any strictly positive diagonal matrix $\D$, when $\mathcal{S}_1 = \{\tmd\}$ \eqref{eqn:dm} precisely computes (inverse of) the condition number $\kappa(\D^{-1/2}\M\D^{-1/2})$. As we gradually add more linear constraints, $\tau_k^{\star}$ is
non-decreasing and $\tau_k^{\star} = \tau^{\star} = 1 / \kappa ( (
\D^{\star} )^{-1/2} \M \D^{\star} )^{-1/2} )$ as soon as $\tmd^{\star} \in \tmop{span} \{
\tmd_1, \ldots, \tmd_k \}$.
\end{enumerate}
Duality provides valuable insights on how to construct a new preconditioner: now that adding a new preconditioner in the primal search space is equivalent to adding a linear constraint to the dual, finding a new preconditioner boils down to seeking the \emph{most effective} linear constraint. 

\subsection{Column generation for the optimal preconditioning SDP}

Suppose we have solved \eqref{eqn:odp-subspace} for some $\mathcal{S}_k$ with $\mathcal{S}_k \cap
\mathbb{R}_{+ +}^n \neq \varnothing$ and that we have the dual optimal
solution $( \X_1^{\star}, \X_2^{\star} )$ to \eqref{eqn:dm}. If $\tmop{diag} (
\X_1^{\star} - \X_2^{\star} ) = \0$, we must have found the whole-space optimal
diagonal preconditioner $\D^\star$ since $\langle \D^{\star}, \X_1^{\star} - \X_2^{\star} \rangle = 0$. Otherwise, $\tmop{diag} ( \X_1^{\star} -
\X_2^{\star} ) \neq \0$. We can identify an improving direction by finding the linear constraint that is \tmtextit{maximally violated} by $\X_1^{\star} - \X_2^{\star}$:
\begin{equation}\label{eqn:pricing}
  \hat{\tmd} = \underset{\| \tmd \|_p = 1}{\arg \max}~ ~
  \langle \tmd, \tmop{diag} ( \X_1^{\star} - \X_2^{\star} )
  \rangle,
\end{equation}
where for $p \in \{ 1, 2, \infty \}$ the problem has a closed form solution.
In the linear programming literature, \eqref{eqn:pricing} is known as the \emph{pricing problem} for
column generation \cite{lubbecke2005selected, oskoorouchi2007matrix, oskoorouchi2011interior}, and is used to generate a new column with maximum dual infeasibility. 
One slight difference here is that our method generates a diagonal matrix instead of generating a column. 
After obtaining $\hat{\tmd}$, we solve \eqref{eqn:odp-subspace} with $\mathcal{S}_{k+1}:=\tmop{span} \{\tmd_1, \ldots, \tmd_k\}\cup \{
\hat{\tmd} \} $ to obtain the new optimal diagonal preconditioner in the expanded subspace. Since $\X_1^{\star},
\X_2^{\star}$ are, in some sense, ``most infeasible'' with respect to the
constraint $\langle \hat{\D}, \X_1^{\star} - \X_2^{\star} \rangle
= 0$, we expect $\hat{\tmd}$ to perform better than a random matrix. \\

Two difficulties remain in implementing our method in practice:
First, how can the SIP approach, which works in the primal space,
efficiently construct the dual optimal variables $\X_1^{\star}, \X_2^{\star}$?
Second, as the number of basis matrices increases, the resulting SDP becomes more expensive. 
Somewhat interestingly, both challenges can be overcome
using semi-infinite duality and another dimension reduction idea.
\subsection{Semi-infinite duality and dual solution retrieval}
\begin{figure}
\centering
	\includegraphics[scale=0.5]{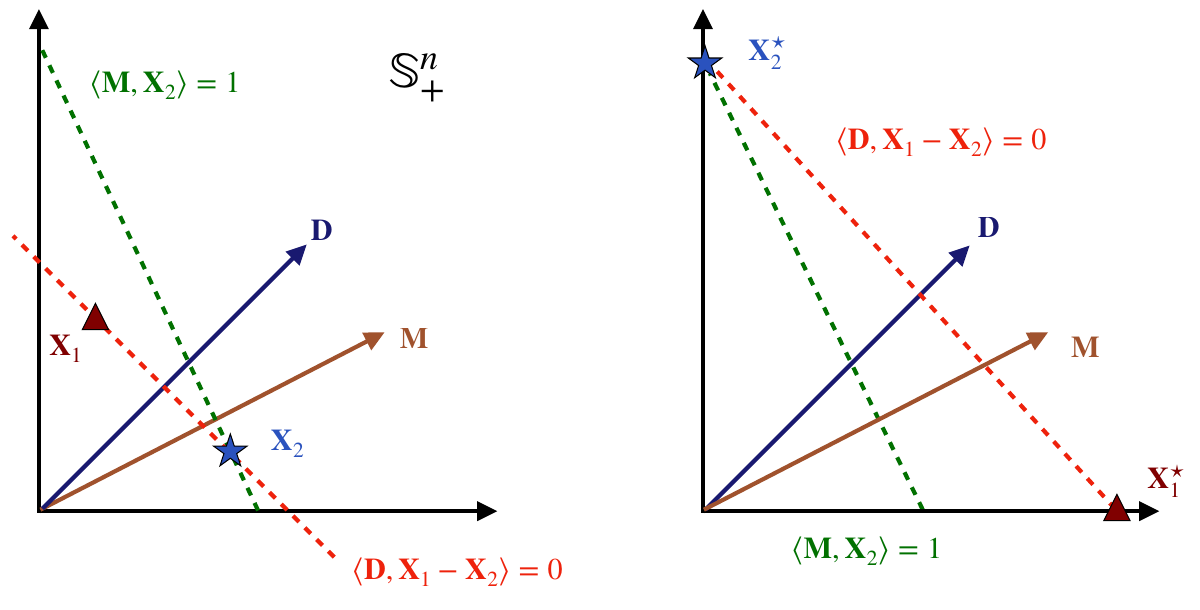}
\caption{Left: an illustration of the dual problem. Given $\X_2$ on the hyperplane $\langle \M, \X_2 \rangle =1$, 
we seek $\X_1$ such that \textbf{1)}. $\X_1 - \X_2$ is orthogonal to each $\D_i$ and \textbf{2)}. $\X_1$ minimizes $\langle \M, \X_1 \rangle$. Right: the optimal solution.}
\end{figure}
Now let us see how to obtain the dual solution. First,
write the dual problem of \eqref{eq:finite-SIP} associated with $\mathcal{V}_1,
\mathcal{V}_2$ as
\begin{eqnarray*}
  \minf{\X_1, \X_2, \x_1, \x_2} & \langle \M, \X_1 \rangle & \\
  \text{subject to} & \langle \M, \X_2 \rangle = 1 & \\
  & \langle \D_i, \X_1 - \X_2 \rangle = 0, & i \in [k]\\
  & \X_1 = \sum_{j = 1}^{| \mathcal{V}_1 |} x_{1 j} \tmv_1^j
  {\tmv_1^j}^{\top} & \\
  & \X_2 = \sum_{j = 1}^{| \mathcal{V}_2 |} x_{2 j} \tmv_2^j
  {\tmv_2^j}^{\top} & \\
  & \x_1, \x_2 \geq \0. & 
\end{eqnarray*}
After we certify the optimal solution to \eqref{eq:finite-SIP} also satisfies the SDP constraints, we can retrieve $\X_1^{\star} = \sum_{j = 1}^{|
\mathcal{V}_1 |} x_{1 j}^{\star} \tmv_1^j {\tmv_1^{j\top}}$ and $\X_2^{\star} =
\sum_{j = 1}^{| \mathcal{V}_2 |} x_{2 j}^{\star} \tmv_2^j {\tmv_2}^{j\top}$
from the optimal LP dual solution $(\x_1^\star, \x_2^\star)$. 
The new search direction relies only on the diagonal of $\X_1^{\star}$
and $\X_2^{\star}$, hence only $\mathcal{O} ((|
\mathcal{V}_1 | + | \mathcal{V}_2 |)n)$ arithmetic operations are needed.

\subsection{Dimension reduction for column generation}

As in LP column generation methods, subspace dimension $k$
increases as the algorithm progresses. Since \textbf{Algorithm \ref{alg:1}} has a $k^4$ dependency, the
cutting plane method slows down significantly as $k$ grows.
In the column generation literature \cite{lubbecke2005selected}, 
one solution is to drop some of the columns (here, diagonal matrices) when $k$ becomes large. 
However, this approach requires a careful strategy to decide both the size of $k$ and which matrices to drop. 
The structure of our problem allows a more efficient solution. 
Consider $k = 2$. We seek the best
linear combination of two matrices, 
$\tmd^{\text{{best}}} = z_1^{\star} \tmd_1 + z_2^{\star} \tmd_2$. 
After computing the best preconditioner $\tmd^{\text{{best}}}$ in the current subspace 
and obtaining a new direction $\hat{\tmd}$, 
instead of keeping $\tmd_1, \tmd_2$ in our subspace, we can
replace them by $\tmd^{\text{{best}}}$ and continue searching in the two-dimensional
subspace $\tmop{span} \{ \tmd^{\text{{best}}}, \hat{\tmd} \}$ (updating $\tmd_1 = \tmd^{\text{{best}}}, \tmd_2 = \hat{\tmd}$). 
By repeating this procedure, the algorithm monotonically reduces the condition number 
while maintaining a subspace of size $k = 2$. 
See \textbf{Figure \ref{fig:iter}} and \textbf{Algorithm \ref{alg:2}} for an illustration.

\begin{algorithm}[h]
\label{alg:2}
\KwIn{Initial basis $\{\tmd_1, \tmd_2\}$, maximum iteration $T$\;}
\SetKwRepeat{Do}{do}{while}
\For{$t = 1,\ldots,T$}{\

{\textbf{solve}  \eqref{eqn:odp-subspace} using SIP to obtain $\tmd^{\text{{best}}}=z_1^{\star} \tmd_1 + z_2^{\star} \tmd_2$ and $\tmop{diag}(\X_1^{\star} - \X_2^{\star})$}\\
{\textbf{solve} pricing problem \eqref{eqn:pricing} and get $\hat{\tmd}$}\\
{\textbf{let} $\tmd_1 \leftarrow \tmd^{\text{{best}}}$ and $\tmd_2 \leftarrow \hat{\tmd}$}
}
\caption{Iterating in the space of preconditioners}
\end{algorithm}

\begin{figure}[h]
\centering
\includegraphics[scale=0.55]{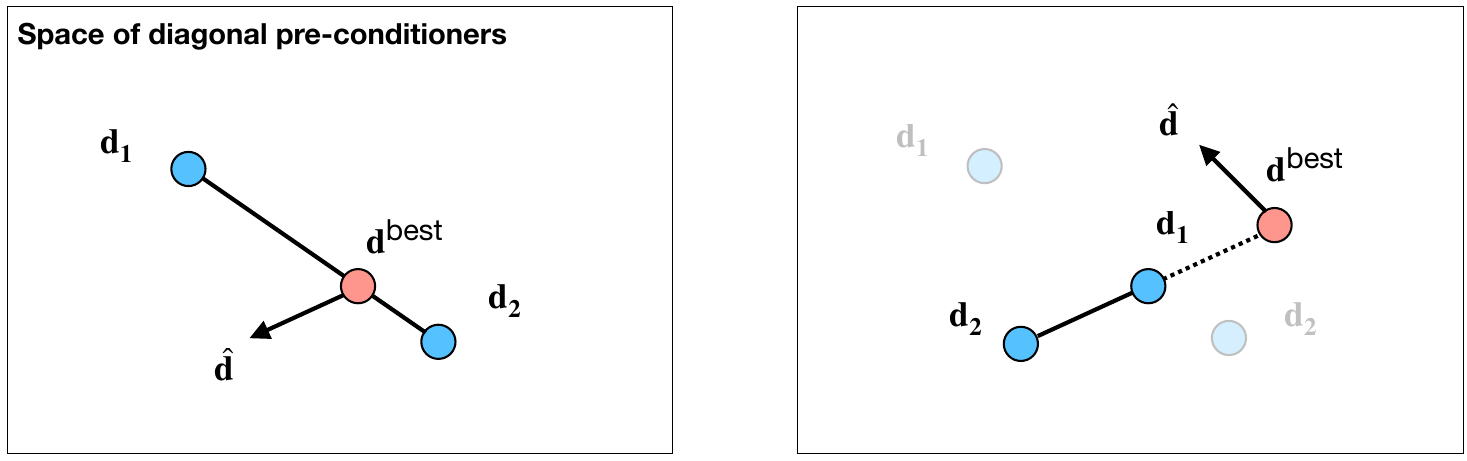}
\caption{Iterating in the space of preconditioners. In each iteration, we obtain the best linear combination between the best preconditioner so far $\tmd_1$ and an improving direction $\tmd_2$. After solving the SIP, we update the best preconditioner to $\tmd_1 \leftarrow \tmd^{\text{{best}}}$ and get a new improving direction $\tmd_2 \leftarrow \hat{\tmd}$ from the pricing problem. \label{fig:iter}}
\end{figure}
\begin{thm} \label{thm:4}
{\textbf{Algorithm} \ref{alg:2}} generates a sequence of improved preconditioners.
\end{thm}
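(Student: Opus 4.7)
The plan is to prove monotonicity by a straightforward subspace-inclusion argument: the optimal preconditioner from iteration $t$ lies in the subspace searched at iteration $t+1$, so the optimal value cannot decrease.

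Concretely, let $\tmd_1^{(t)}, \tmd_2^{(t)}$ denote the basis at iteration $t$, let $\mathcal{S}_2^{(t)} = \tmop{span}\{\tmd_1^{(t)}, \tmd_2^{(t)}\}$, and let $(\tau_t^\star, z_1^{\star,t}, z_2^{\star,t})$ be the optimal solution of \eqref{eqn:odp-subspace} over $\mathcal{S}_2^{(t)}$, producing $\tmd_t^{\text{best}} = z_1^{\star,t}\tmd_1^{(t)} + z_2^{\star,t}\tmd_2^{(t)}$ with $\kappa_t^\star = 1/\tau_t^\star$. At iteration $t+1$, Algorithm~\ref{alg:2} sets $\tmd_1^{(t+1)} = \tmd_t^{\text{best}}$ and $\tmd_2^{(t+1)} = \hat{\tmd}_t$, so the new search subspace satisfies $\tmd_t^{\text{best}} \in \mathcal{S}_2^{(t+1)}$.

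The key step is to exhibit a feasible point at iteration $t+1$ recovering the previous objective. Take $(\tau, z_1, z_2) = (\tau_t^\star, 1, 0)$. Then $\sum_{i=1}^2 \tmd_i^{(t+1)} z_i = \tmd_t^{\text{best}}$, so the two SDP constraints in \eqref{eqn:odp-subspace} read exactly
\[
\tmop{diagm}(\tmd_t^{\text{best}}) - \M \tau_t^\star \succeq \0, \qquad \M - \tmop{diagm}(\tmd_t^{\text{best}}) \succeq \0,
\]
both of which hold by feasibility of $(\tau_t^\star, z_1^{\star,t}, z_2^{\star,t})$ at iteration $t$. Hence $(\tau_t^\star, 1, 0)$ is feasible for \eqref{eqn:odp-subspace} at iteration $t+1$, so $\tau_{t+1}^\star \geq \tau_t^\star$, equivalently $\kappa_{t+1}^\star \leq \kappa_t^\star$, establishing monotone non-increase of condition number across iterations.

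There is no real obstacle here; the result follows from the fact that the previous best preconditioner is retained in the new basis, so the new SDP is a relaxation of the trivial one that returns the same preconditioner. If one wishes to argue \emph{strict} improvement when $\tmd_t^{\text{best}}$ is not already whole-space optimal, one can invoke the dual interpretation from Section~\ref{sec:colgen}: when $\tmop{diag}(\X_1^\star - \X_2^\star) \neq \0$, the pricing problem \eqref{eqn:pricing} returns a direction $\hat{\tmd}_t$ along which the dual constraint $\langle \hat{\tmd}_t, \X_1^\star - \X_2^\star\rangle = 0$ is violated, so adding $\hat{\tmd}_t$ to the primal subspace yields a direction of improvement and $\tau_{t+1}^\star > \tau_t^\star$; otherwise $\tmd_t^{\text{best}} = \tmd^\star$ and the algorithm has already reached the whole-space optimum. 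The only mild subtlety to check is that the sequence consists of valid (positive definite) preconditioners throughout, which is guaranteed as long as the initial basis spans a vector intersecting $\mathbb{R}^n_{++}$, since then the feasible point $(1,0)$ above keeps $\tmd_t^{\text{best}} \succ \0$ at every iteration.
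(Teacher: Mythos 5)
Your argument is correct and mirrors the paper's own proof: both hinge on the observation that the previous iterate's best preconditioner $\tmd^{\text{best}}$ is retained in the next two-dimensional subspace, so a feasible point recovering the old objective always exists and the optimal value cannot decrease. Your added remarks on strict improvement via the dual/pricing problem and on maintaining positive-definiteness are sensible but beyond what the paper's proof itself addresses.
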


\prove{
\begin{proof}
Assume $\mathcal{S}_2 = \tmop{span} \left\{ \tmd_1, \tmd_2 \right\}$ and we solve an SDP
for $(z_1^{\star}, z_2^{\star})$ such that $z_1^{\star} \tmd_1 + z_2^{\star}
\tmd_2$ gives the best condition number in $\mathcal{S}_2 .$ Now that the next
subspace $\mathcal{S}_2^+ = \tmop{span}~ \{ z_1^{\star} \tmd_1 + z_2^{\star} \tmd_2,
\hat{\tmd} \}$ contains $z_1^{\star} \tmd_1 + z_2^{\star} \tmd_2$,
 the optimal diagonal preconditioner in
$\mathcal{S}_2^+$ is no worse than $z_1^{\star} \tmd_1 + z_2^{\star} \tmd_2$.
This completes the proof.
\end{proof}
}

\begin{rem}
The procedure can be viewed as a (Frank-Wolfe style) iterative optimization in the space of preconditioners. The improving direction is obtained from SIP duality and the stepsize comes from another SIP. 
This procedure yields a simple, effective method to reduce the condition number, even though the general problem of minimizing the condition number is nonsmooth and nonconvex \cite{marechal2009optimizing}.
\end{rem}

\begin{rem}
Our approach shares spirits with the spectral bundle method \cite{helmberg2000spectral}. However, our method is motivated by SDP duality, column generation, and pricing problems instead of the conventional SDP penalty formulation, subgradient method, and model function. According to our tests, the cutting plane method performs more favorably than the spectral bundle method.
\end{rem}

\subsection{Implications of dimension reduction and extensions}
Importantly, the low dimension $k=2$ of \eqref{eqn:odp-subspace} guaranteed by \textbf{Algorithm \ref{alg:2}} offers additional computational benefits,
as SDPs have desirable rank properties when $k$ is small \cite{lemon2016low}:
generically, the rank of the optimal dual solution is 1 \cite{ding2021simplicity}.

\begin{thm}\label{thm:5}
Suppose $\tmop{span}~\{\tmd_1, \tmd_2\} \cap \mathbb{R}^n_{++} \neq \varnothing$. Then \eqref{eqn:dm} has an optimal solution $(\X_1^{\star}, \X_2^{\star})$ such that $\tmop{rank}(\X_1^{\star}) = \tmop{rank}(\X_2^{\star}) = 1$. Moreover, if $\tmop{rank}(\M - z_1 \D_1 - z_2 \D_2) \geq n-r$ for every $(z_1, z_2)$, all the optimal solutions to  \eqref{eqn:dm} satisfy $\max\{\tmop{rank}(\X_1^{\star}), \tmop{rank}(\X_2^{\star})\}\leq 2r - 1$.
\end{thm}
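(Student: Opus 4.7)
The plan is to combine the classical Pataki--Barvinok rank bound for extreme SDP solutions with primal--dual complementary slackness. For the first claim, I would count the scalar equality constraints of \eqref{eqn:dm} with $k=2$: the normalization $\langle \M, \X_2 \rangle = 1$ together with the two subspace conditions $\langle \D_i, \X_1 - \X_2 \rangle = 0$ for $i=1,2$ give exactly three equalities, while the conic constraints are $\X_1, \X_2 \succeq \0$. Viewing $(\X_1, \X_2)$ as a single block-diagonal matrix in $\mathbb{S}^{2n}_+$, Pataki's theorem produces an extreme optimal solution whose ranks $\rho_i := \tmop{rank}(\X_i^\star)$ obey $\tfrac{\rho_1(\rho_1+1)}{2} + \tfrac{\rho_2(\rho_2+1)}{2} \leq 3$.

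Next I would use the subspace hypothesis $\tmop{span}\{\tmd_1,\tmd_2\} \cap \mathbb{R}^n_{++} \neq \varnothing$ to rule out rank zero for either block. Pick $c_1,c_2$ with $c_1\D_1 + c_2\D_2 \succ \0$. If $\X_2^\star = \0$, the normalization $\langle \M, \X_2 \rangle = 1$ fails; if $\X_1^\star = \0$, the $(c_1,c_2)$-combination of the two subspace constraints yields $\langle c_1\D_1 + c_2\D_2, \X_2^\star \rangle = 0$, forcing $\X_2^\star = \0$ and contradicting feasibility. Hence $\rho_1, \rho_2 \geq 1$, and the Pataki inequality collapses to $\rho_1 = \rho_2 = 1$, establishing the first statement.

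For the second statement, I would invoke strong duality, guaranteed by Slater's condition (the point $(\X_1,\X_2) = (\tfrac{1}{n}\M^{-1}, \tfrac{1}{n}\M^{-1})$ is strictly feasible), together with complementary slackness: for any primal optimum $(\tau^\star, z_1^\star, z_2^\star)$ and any dual optimum $(\X_1^\star,\X_2^\star)$, one has $\X_1^\star(z_1^\star\D_1 + z_2^\star\D_2 - \tau^\star\M) = \0$ and $\X_2^\star(\M - z_1^\star\D_1 - z_2^\star\D_2) = \0$, so $\tmop{range}(\X_i^\star)$ lies in the kernel of the corresponding slack matrix. Since $\tau^\star > 0$, the factorization $z_1^\star\D_1 + z_2^\star\D_2 - \tau^\star\M = -\tau^\star(\M - (z_1^\star/\tau^\star)\D_1 - (z_2^\star/\tau^\star)\D_2)$ lets me apply the hypothesis $\tmop{rank}(\M - z_1\D_1 - z_2\D_2) \geq n-r$ to both slack matrices, bounding each kernel dimension by $r$.

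The main obstacle is upgrading these per-block bounds into the claimed $\max\{\tmop{rank}(\X_1^\star),\tmop{rank}(\X_2^\star)\} \leq 2r-1$ for \emph{every} optimal dual solution, not merely the extreme ones. My plan is to describe the dual optimal face as the convex hull of its extreme points (each having small rank by the first-part Pataki argument) and to observe that every extreme optimum's range sits in a common low-dimensional slack kernel determined by a fixed primal optimum. Combining the Pataki rank-$2$ ceiling on each extreme block with this shared $r$-dimensional kernel, via a careful dimension count of the affine span of the optimal face, should then produce the claimed $2r-1$ bound.
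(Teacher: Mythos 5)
Your Pataki-plus-nonvanishing argument for the rank-one claim is essentially the paper's proof (they also invoke \cite{pataki1998rank} to bound $\tmop{rank}(\X_1^\star\oplus\X_2^\star)$ by $2$ and then rule out the zero block by the same two observations: $\langle\M,\X_2\rangle=1$ forces $\X_2^\star\neq\0$, and $\tmop{span}\{\tmd_1,\tmd_2\}\cap\mathbb{R}^n_{++}\neq\varnothing$ forces $\X_1^\star\neq\0$).

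For the second claim, the complementary-slackness argument you outline is already a \emph{complete} proof, and the ``main obstacle'' you describe does not exist. Complementary slackness in SDP holds for \emph{every} pair of primal--dual optima once strong duality holds and both sides are attained; it is not restricted to extreme points. So for any dual optimum $(\X_1^\star,\X_2^\star)$ and any fixed primal optimum $(\tau^\star,z_1^\star,z_2^\star)$ with $\tau^\star>0$, the identities $\X_2^\star(\M - z_1^\star\D_1 - z_2^\star\D_2)=\0$ and $\X_1^\star(z_1^\star\D_1 + z_2^\star\D_2 - \tau^\star\M)=\0$ hold, and your factorization through $\tau^\star$ gives $\tmop{rank}(\X_i^\star)\leq n - \tmop{rank}(\text{slack}_i)\leq r$ for $i=1,2$. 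Since the subspace hypothesis forces $r\geq 1$ (for $\D=c_1\D_1+c_2\D_2\succ\0$, $\det(\M - t\D)$ vanishes at any generalized eigenvalue of $(\M,\D)$), this yields $\max\{\tmop{rank}(\X_1^\star),\tmop{rank}(\X_2^\star)\}\leq r\leq 2r-1$, which is what is claimed (and in fact slightly stronger for $r\geq 2$). The convex-hull-of-extreme-points detour you propose is unnecessary and would in any case not go through easily, since convex combinations of low-rank matrices can have large rank. The paper itself does it a bit differently: it invokes the SDP rank theorem of \cite{lemon2016low} to obtain $\tmop{rank}(\X_1^\star\oplus\X_2^\star)\leq 2n - \min_{z}\tmop{rank}(\cdot) - \min_z\tmop{rank}(\cdot)\leq 2r$ and then subtracts one because each block is nonzero, landing exactly on $2r-1$. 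Your per-block route is cleaner and avoids the need to cite a separate rank theorem; you should simply delete the final paragraph and conclude.
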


\prove{
\begin{proof}
The existence of a rank-one solution is guaranteed by \cite{pataki1998rank}, where we know there exist  $\X_1^\star$ and $\X_2^\star$ such that $\tmop{rank} (\X_1^\star \oplus \X_2^\star) \leq \sqrt{2k} = 2$. If $\X_1^\star = \0$, then the optimal value of \eqref{eqn:dm} is 0 and this implies there is no valid preconditioner, contradicting the assumption $\tmop{span}~\{\tmd_1, \tmd_2\} \cap \mathbb{R}^n_{++} \neq \varnothing$. On the other hand, $\X_2^\star \neq \0$ due to the constraint $\langle \M, \X_2 \rangle = 1$. Therefore both $\X_1^\star$ and $\X_2^\star$ must be rank-one.
The second claim follows from the SDP rank theorem \cite{lemon2016low}
\begin{align}
  \tmop{rank} ( \X_1^{\star} \oplus \X_2^{\star} ) \leq{} & 2 n -
  \min_{\tau^{\star}, z_1^{\star}, z_2^{\star}} \tmop{rank} ( ( \M -
  \D_1 z_1^{\star} - \D_1 z_2^{\star} ) \oplus ( \D_1 z_1^{\star} +
  \D_1 z_2^{\star} - \M \tau^{\star} ) ) \nonumber\\
  ={} & 2 n - \min_{z_1^{\star}, z_2^{\star}} \tmop{rank} ( \M - \D_1
  z_1^{\star} - \D_1 z_2^{\star} ) - \min_{z_1^{\star}, z_2^{\star}}
  \tmop{rank} ( \D_1 z_1^{\star} + \D_1 z_2^{\star} - \M \tau^{\star}
  ) \nonumber\\
  \leq{} & 2 n - \min_{z_1, z_2} \tmop{rank} ( \M - \D_1 z_1 - \D_1 z_2
  ) - \min_{z_1, z_2} \tmop{rank} ( \D_1 z_1 + \D_1 z_2 - \M
  ) \label{proof-3} \\
  \leq{} & 2 n - 2 (n - r) =2 r\nonumber
\end{align}

where \eqref{proof-3} uses the fact $\tau^\star > 0$ and that
\[ \tmop{rank} ( \D_1 z_1^{\star} + \D_1 z_2^{\star} - \M \tau^{\star}) = \tmop{rank} ( \D_1 (z_1^{\star} / \tau^{\star}) + \D_1 (z_2^{\star}  / \tau^{\star})- \M).\]
Since $\X_1^\star, \X_2^\star$ cannot be $\0$, this completes the proof.
\end{proof}
}

\textbf{Theorem \ref{thm:5}} suggests that optimal solutions to \eqref{eqn:dm} have low rank, which aligns with the low iteration complexity of \textbf{Algorithm \ref{alg:1}}. 
As an important implication of this low-rank property,
it is cheap to evaluate the dual optimal matrices
 $\X_1^{\star} = \sum_{j = 1}^{|
\mathcal{V}_1 |} x_{1 j}^{\star} \tmv_1^j {\tmv_1}^{j\top}$ and $\X_2^{\star} =
\sum_{j = 1}^{| \mathcal{V}_2 |} x_{2 j}^{\star} \tmv_2^j {\tmv_2}^{j\top}$, since $(\x_1^{\star}, \x_2^{\star}	)$ has a sparse support. 
This observation motivates a series of extensions.

\paragraph{Non-diagonal preconditioners.} 
Formulation \eqref{eqn:odp-subspace} can be extended to non-diagonal preconditioners if we allow $\D_i = \tmP_i$ to be non-diagonal.
\begin{equation}
\label{eqn:nondiag}
\maxf{\z, \tau} ~\tau \quad \text{subject to}  \quad \M \tau - \sum_{i = 1}^k \tmP_i z_i \preceq \0, \quad \sum_{i = 1}^k \tmP_i z_i - \M\preceq \0,
\end{equation}
This formulation seeks the best combination of a set of arbitrary matrices $\{\tmP_i\}$. If $\tmop{span}\{\tmP_i\} \cap \mathbb{S}^n_+$ is nonempty, all our previous results hold. Assuming $k = 2$ and we have access to $(\X_1^\star, \X_2^\star)$ as the sum of $\mathcal{O}(1)$ vector outer products, we can design the following pricing problems and their corresponding preconditioners.
\begin{itemize}[leftmargin=*,itemsep=0pt]
	\item \textbf{Preconditioners of arbitrary sparsity pattern}. \\Given specified sparsity pattern (for example, tri-diagonal matrices), solve the pricing problem
	\[
\maxf{\|\tmP\| = 1}~ ~
  \langle \tmP,  \X_1^{\star} - \X_2^{\star}
  \rangle \]
  \item \textbf{Low-rank plus diagonal preconditioner}.\\
Assume $\tmP_1$ is diagonal. Given a specified rank $r$, solve the pricing problem
  \[
\maxf{\| \tma_i \| = 1}~~
  \langle \textstyle \sum_{i=1}^r \tma_i \tma_i^\top,  ( \X_1^{\star} - \X_2^{\star} )
  \rangle, \]
 and we solve \eqref{eqn:nondiag} to find the best linear combination of $\tmP_1$ and  $\sum_{i=1}^r \tma_i \tma_i^\top$
\end{itemize}

\paragraph{Design of sparse approximate inverse preconditioners}
We conclude by elaborating on the comment at the beginning of the section that the \emph{extremal} eigenvectors contain information that improves the condition number. We focus on the simplest case where $\D=\I$. Assume that two extremal eigenvalues of $\M$ have multiplicity 1 with eigenvectors $\tmv_{\max}$ and $\tmv_{\min}$. Taking $k = 1$ and $\mathcal{S}_1 = \{\1\}$, then \eqref{eqn:odp-subspace} computes $\kappa(\M)$:
\begin{equation*}
 \maxf{z, \tau} ~\tau \quad \text{subject to}  \quad \M \tau -  \I z \preceq \0, \quad \I z \preceq \M.
\end{equation*}
At the optimal solution $(z^\star,\tau^\star)$,  $\tau^\star=1/\kappa(\M)$ and $z=\lambda_{\min}(\M)$.
By \textbf{Theorem \ref{thm:5}} we know that $\tmop{rank}(\X_1^{\star}) = \tmop{rank}(\X_2^{\star}) = 1$. Moreover, by complementary slackness, we conclude $\X_1^\star =  \tmv_{\max}\tmv_{\max}^\top, \X_2^\star = \tmv_{\min} \tmv_{\min}^\top$ respectively. Therefore, the magnitude of each coordinate of $(\tmv_{\min} \tmv_{\min}^\top - \tmv_{\max}\tmv_{\max}^\top)$ can be viewed as a score function that measures \textit{which sparsity pattern decreases the condition number most}. This motivates a new heuristic to find the sparsity pattern for sparse approximate inverse preconditioners.

\section{Numerical experiments} \label{sec:exp}

In this section, we conduct numerical experiments to illustrate the practical
efficiency of the proposed method. Our experiment is divided into three parts. In
the first part, we focus on the theoretical properties of our proposed methods and verify our theoretical findings on simulated data. We test our method on real-life and large-scale matrix instances in the second part. Lastly, we test some extensions of our method.

\subsection{Experiment setup} \label{sec:setup}
\paragraph{Synthetic data.} For synthetic data, given sparsity parameter
$\sigma$ and regularization parameter $\alpha > 0$, we generate sparse random
matrix $\A \in \mathbb{R}^{n \times n}$ and take $\M = \A^{\top} \A + \alpha
\cdummy \I$. Each nonzero element of $\A$ is sampled from \tmtextbf{1)}
uniform distribution $\mathcal{U} [0, 1]$ and \tmtextbf{2)} standard normal
distribution $\mathcal{N} (0, 1)$.

\paragraph{Real matrices.} For real matrices, we take matrices from \texttt{SuiteSparse}
matrix collection {\cite{kolodziej2019suitesparse}}.
\begin{enumerate}[label=\textbf{\arabic*)}, ref=Ex\arabic*, leftmargin=*]
  \item \tmtextbf{Dataset generation}. To verify the
  marginal decrease behavior of the randomized approach in \textbf{Section \ref{sec:subspace}}, we choose $n = 30, \sigma \in \{
  0.3, 1.0 \}, \alpha = 10^{- 5}$. In the second
  part, we choose $n \in \{ 10^2, 10^4, 10^6, 10^7 \}$ with corresponding sparsity $\sigma \in \{ 10^{-
  1}, 10^{- 3}, 10^{- 6}, 5\times10^{-8} \}$, $\alpha = 10^{- 3}$.
  
  \item \tmtextbf{SDP/LP solver}. We employ \texttt{COPT} \cite{ge2022cardinal} and \texttt{HDSDP} \cite{gao2022hdsdp} to solve SDP
  problems and when verifying the marginal decrease behavior of randomized
  preconditioners. \texttt{COPT} or \texttt{Gurobi} \cite{gurobi2021gurobi} is applied to solve the LP subproblems
  constructed in the cutting plane approach in \textbf{Section \ref{sec:semiinf}}.
  
  \item \tmtextbf{Cutting plane configuration}. We initialize $\mathcal{V}_1,
  \mathcal{V}_2$ in the cutting plane method with 20 initial vectors generated
  randomly from normal distributions. If the problem is unbounded, we will continue
  adding random vectors until the LP becomes bounded. We apply the Lanczos
  method for extremal eigenvalue computation as the separation oracle. The relative
  tolerance of oracle is set to $10^{- 14}$.
  
  \item \tmtextbf{Subspace generation}. For the experiment of randomized subspace, we generate diagonal matrices from $\mathcal{U} [-0.5, 0.5]^n$.
  
  \item \tmtextbf{Experiment environment}. Experiments run on
  \texttt{Mac Mini} of 16\texttt{GB} memory with \texttt{M1 Apple
  Silicon}
\end{enumerate}

\subsection{Verification of theory}

We start by verifying our theoretical findings. We
focus on \tmtextbf{1)} \tmtextbf{Theorem \ref{thm:2}}, the marginal decrease
behavior of randomized preconditioners. \tmtextbf{2)} \tmtextbf{Theorem \ref{thm:3}}, the convergence behavior of the cutting plane approach. \\\

The first experiment's results are summarized in \tmtextbf{Figure \ref{fig:2}}. It verifies that the
effect of adding a new random preconditioner marginally decreases. Indeed, we
see a clear sublinear trend in $\kappa_k$ as $k$ increases. Therefore, adopting a moderate $k$ to achieve satisfactory improvements in condition number is reasonable.   \\
\begin{figure}[h]
\centering
\includegraphics[scale=0.78]{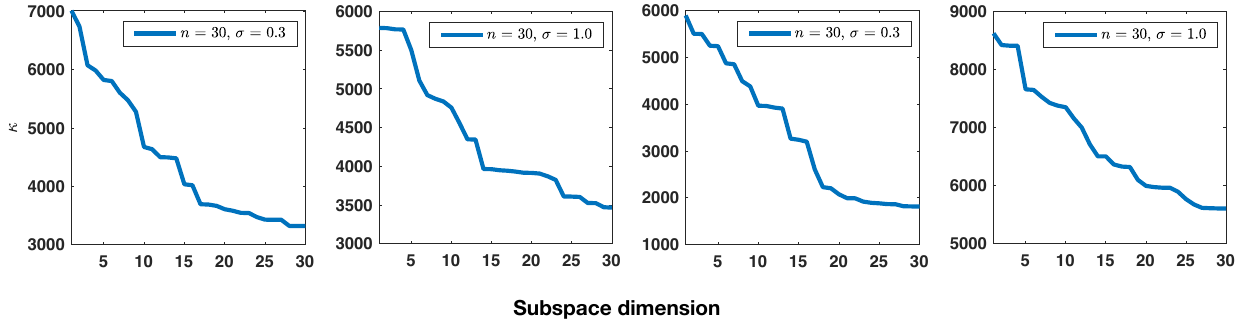}
  \caption{Condition number and randomized subspace dimension $k$. First two columns: uniform distribution; Last two columns: normal distribution;
 x-axis is subspace dimension $k$; y-axis is condition number. \label{fig:2}}
\end{figure}
\begin{figure}[h]
\centering
\includegraphics[scale=0.65]{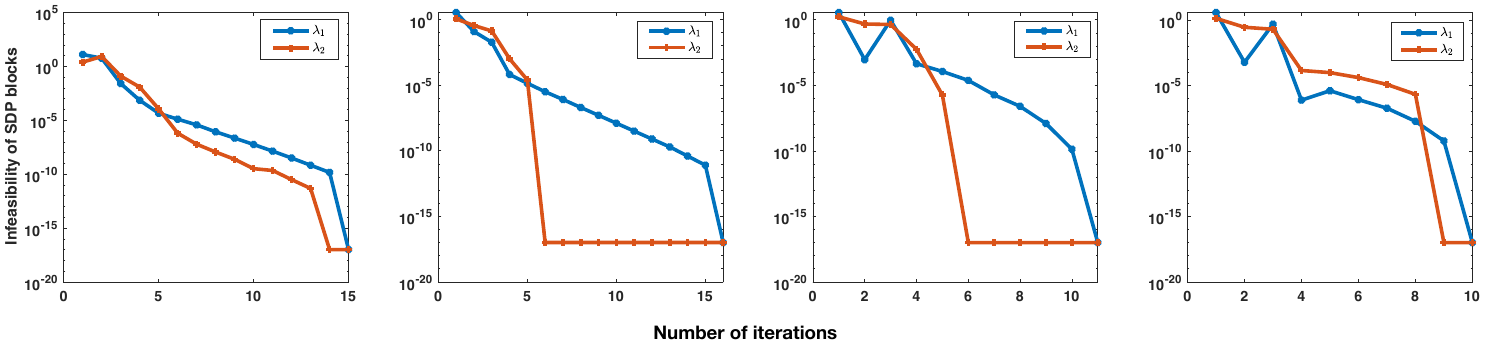}
\caption{Convergence behavior of the SIP approach to solving \eqref{eqn:odp-subspace}. From left to right: matrices of $(n, \sigma) \in \{(10^2, 10^{-1}), (10^4, 10^{-4}), (10^6, 10^{-6}), (10^7, 5\times10^{-8})\}$. x-axis: number of cutting plane iterations; y-axis, the infeasibility of the two SDP blocks, measured by the minimum eigenvalue of two SDP blocks.}
\label{fig:3}
\end{figure}

\textbf{Figure \ref{fig:3}} demonstrates the effectiveness of the cutting plane method in solving the SDP \eqref{eqn:odp-subspace}, as evidenced by the violation of the two SDP constraints. Notably, using the Lanczos method as a separation oracle proves to be practically efficient, requiring $\mathcal{O} ( \tmop{nnz} ( \M ) )$ operations. Additionally, for a moderate value of $k$, the cutting plane algorithm typically converges within 20 iterations. The second block converges more rapidly than the first, which is expected considering the additional variable $\tau$ involved in the first block.

\paragraph{Scalability.} Our next experiment evaluates the scalability of our approach to large-scale sparse matrices. According to \textbf{Table \ref{tab:1}}, our approach can deal with matrices of size up to $10^7$ in \texttt{Matlab}.

\begin{table}[h]
\centering
\caption{Time statistics for matrices in \textbf{Figure \ref{fig:3}} \label{tab:1}}
\begin{tabular}{cccc}
\toprule
$n$ & $\sigma$ & nnz & CPU Time \\
\midrule 
$10^3$ & $10^{-1}$ & 6000 & 0.2\\
$10^4$ & $10^{-4}$ & $10^6$ & 0.6 \\
$10^6$ & $10^{-6}$ & $2\times10^6$ & 15.3 \\
$10^7$ & $5\times10^{-8}$ & $1.3\times10^7$ & 186.2 \\
\bottomrule	
\end{tabular}
\end{table}

\subsection{Practical matrices}

In this section, we evaluate the performance of our method on the \texttt{SuiteSparse} matrix collection and matrices from practical applications.

\paragraph{SuiteSparse collection.}
We consider matrices from the \texttt{SuiteSparse} matrix collection. 
We tested our method on 1000 matrices of size $n \leq 5000$, as it is too expensive to compute the condition number for larger matrices explicitly.  For each matrix, we compare the following preconditioners:

\begin{itemize}[leftmargin=15pt]
	\item \emph{Original.} $\D = \mathbf{I}$.
	\item \emph{Jacobi.} $\D = \diagm (\diag(\M))$.
	\item \emph{Subspace.} $\D$ is the optimal diagonal preconditioner in subspace $\mathcal{S}_2 = \tmop{span} \{\1, \diag(\M) \}$.
	\item \emph{Iterative.} Run \textbf{Algorithm \ref{alg:2}} for 5 iterations and use the resulting preconditioner.
\end{itemize}

 If the cutting plane method fails to compute either the subspace preconditioner or the iterative preconditioner, the matrix result is discarded. 
Condition numbers are evaluated using \texttt{Matlab}'s \texttt{cond} function. We ended up getting 914 matrices, and 86 of the matrices tested failed due to huge condition numbers, and \texttt{Gurobi} fails to solve the linear program when $\tau$ is too small.\\

\textbf{Figure \ref{fig:distribution}} presents the distribution of these condition numbers on a logarithmic scale.  \textbf{Figure \ref{fig:improvement}} presents the same data to highlight the improvements in condition numbers. On average, both the subspace and iterative preconditioners achieve around 2-fold improvement in the condition number. Specifically, the subspace method, on average, reduces the condition number by a factor of 1.9, while the iterative preconditioner reduces by a factor of 2.1.
Notably, this improvement represents a 2.0-fold improvement over the Jacobi preconditioner, which suggests that the Jacobi preconditioner, on average, performs worse on the tested benchmark. There are more than 100 matrices where the iterative preconditioner outperforms the subspace preconditioner by a factor of 1.4.
\begin{figure}[h]
\centering
\includegraphics[scale=0.33]{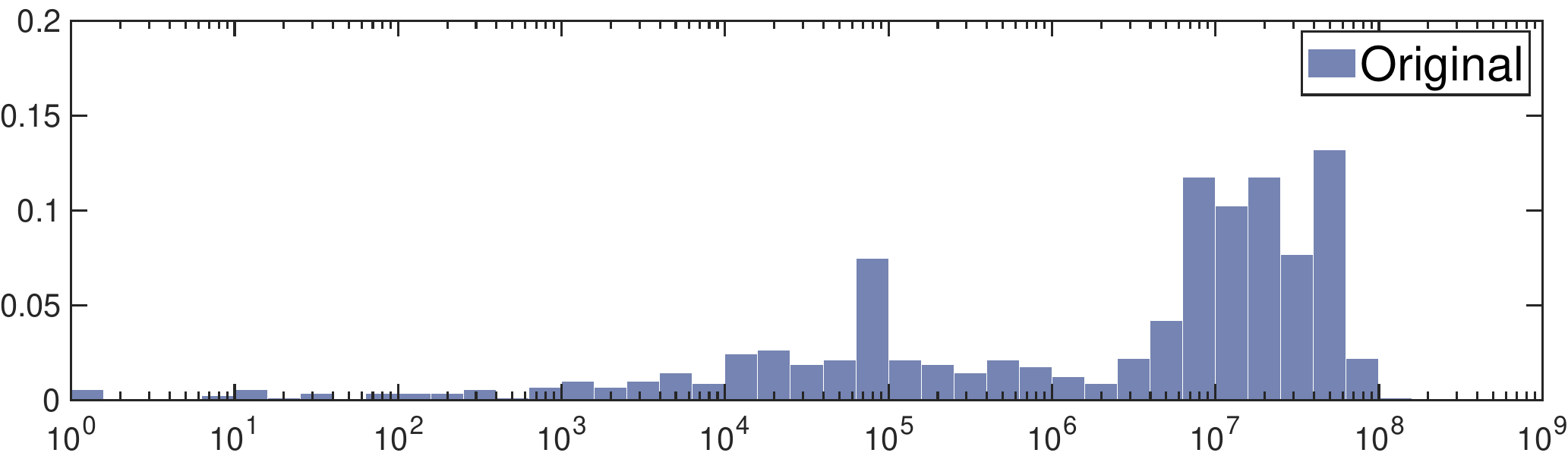} \\
\includegraphics[scale=0.33]{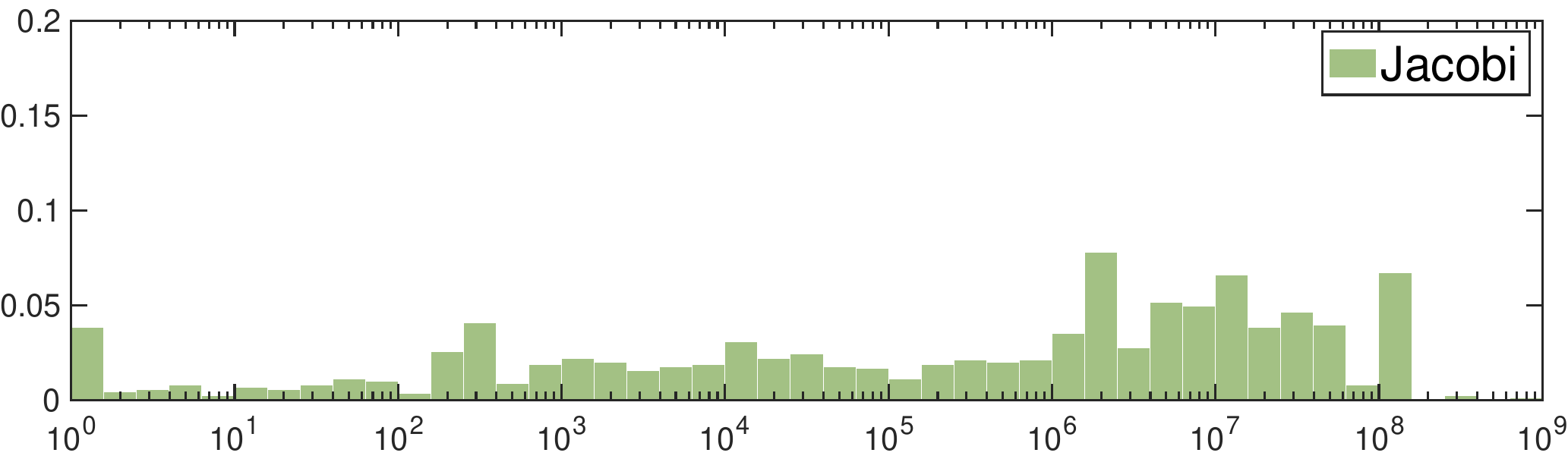}\\
\includegraphics[scale=0.33]{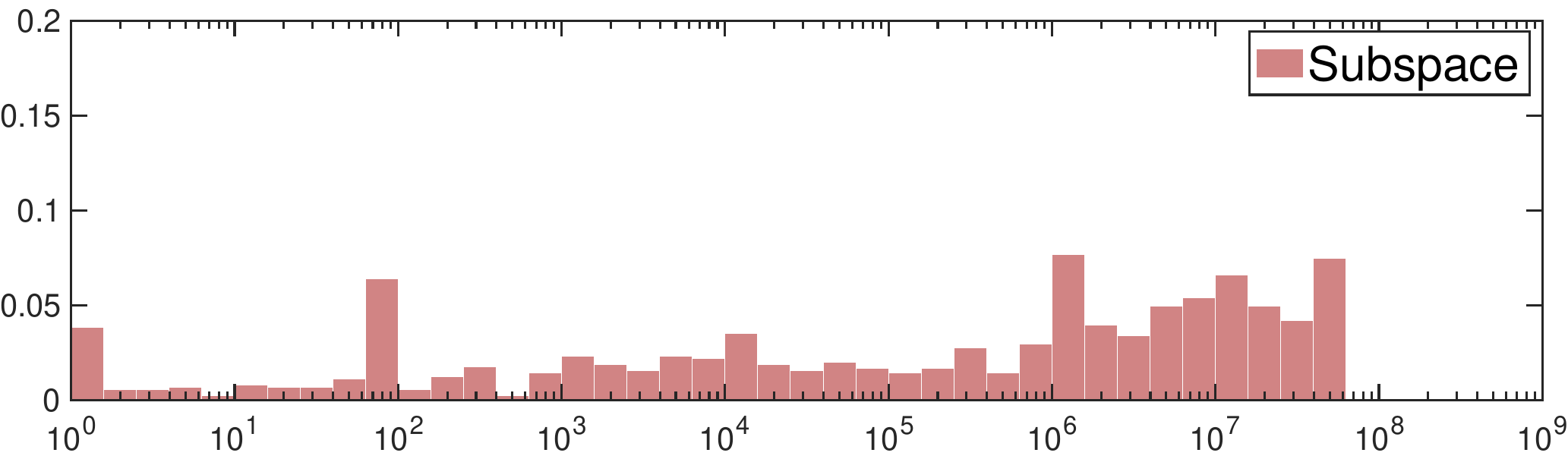}\\
\includegraphics[scale=0.33]{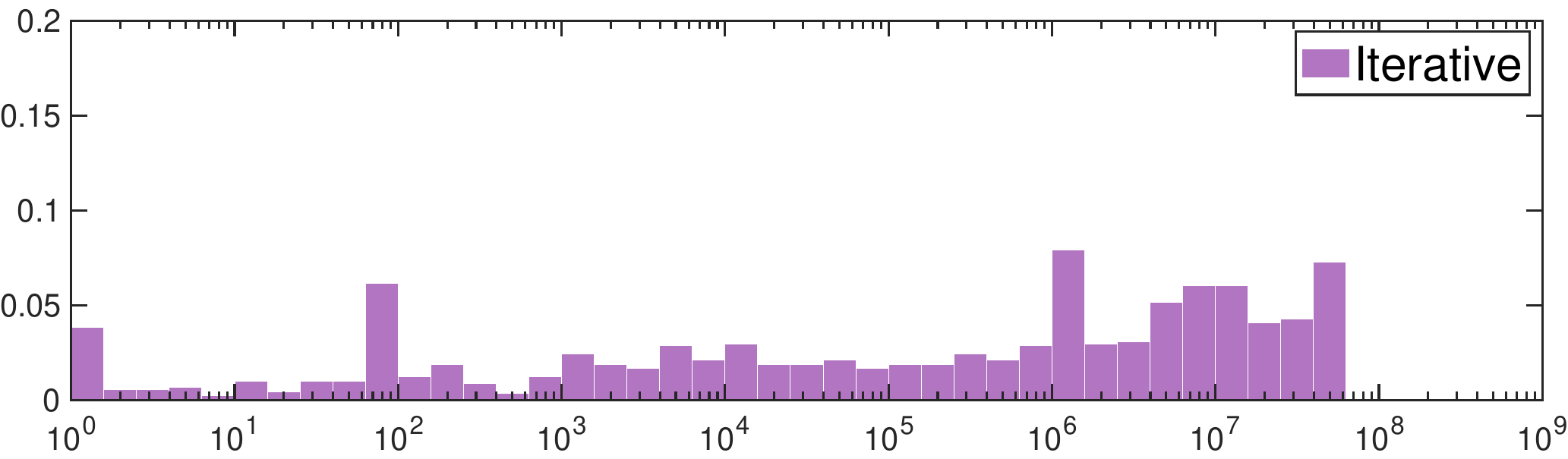}
\caption{Distribution of condition number for different preconditioners. \label{fig:distribution}}
\end{figure}

\begin{figure}[h]
\centering
\includegraphics[scale=0.22]{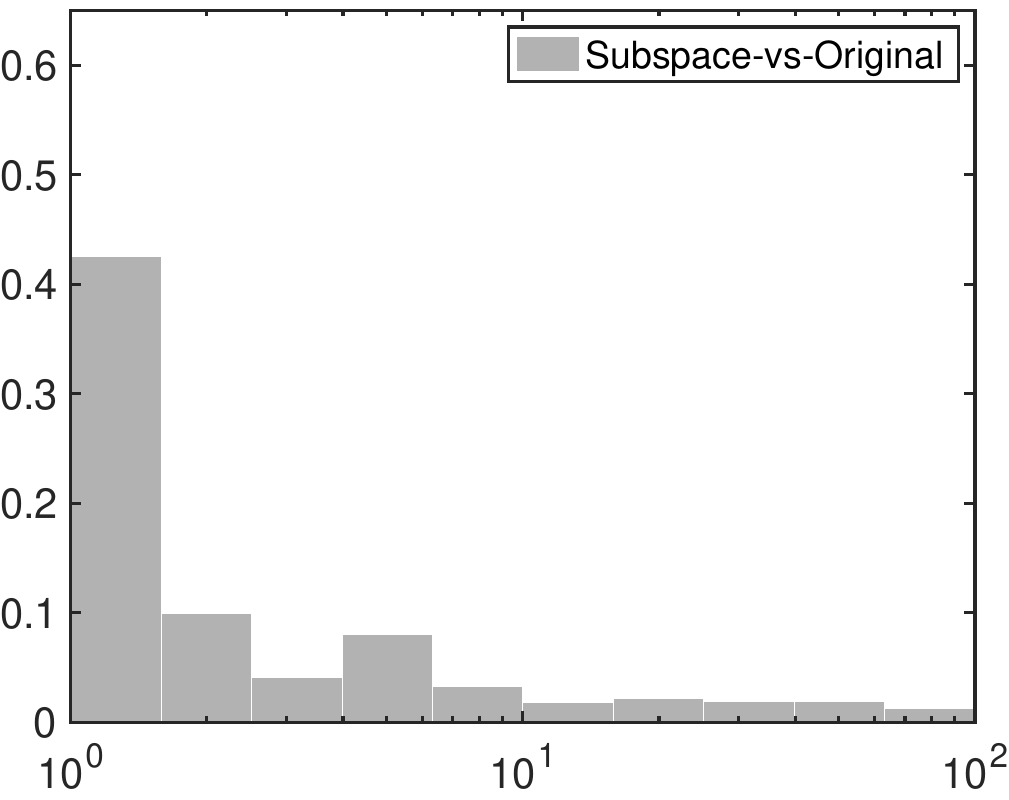}
\includegraphics[scale=0.22]{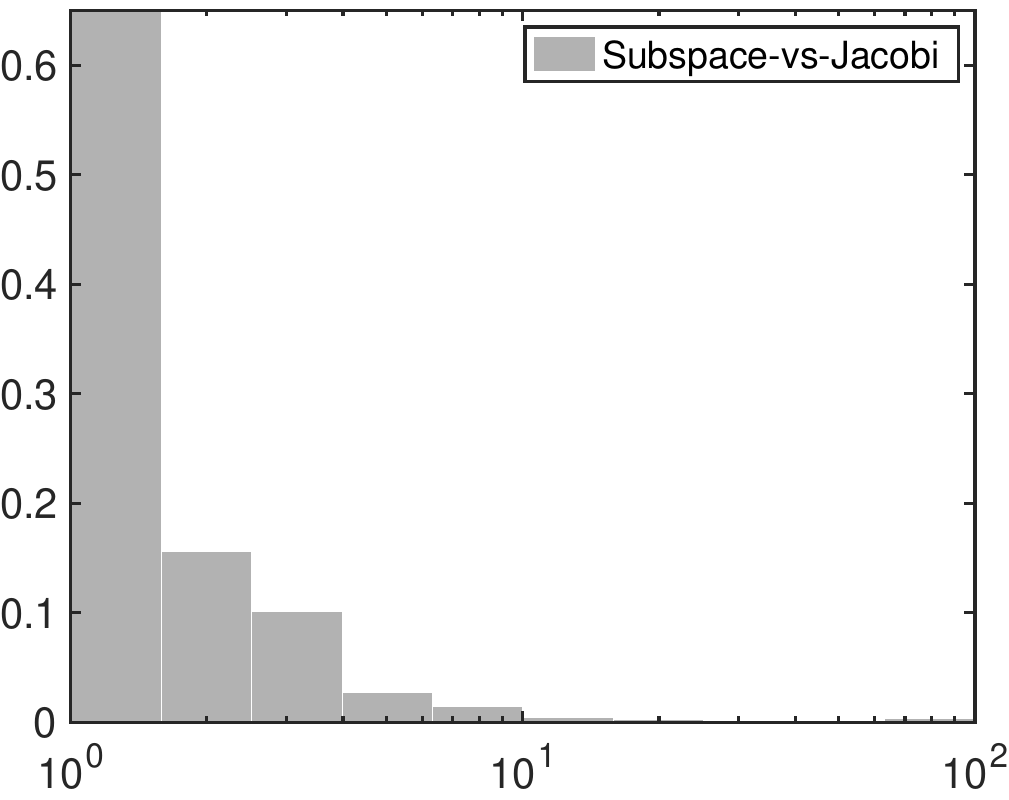}
\includegraphics[scale=0.22]{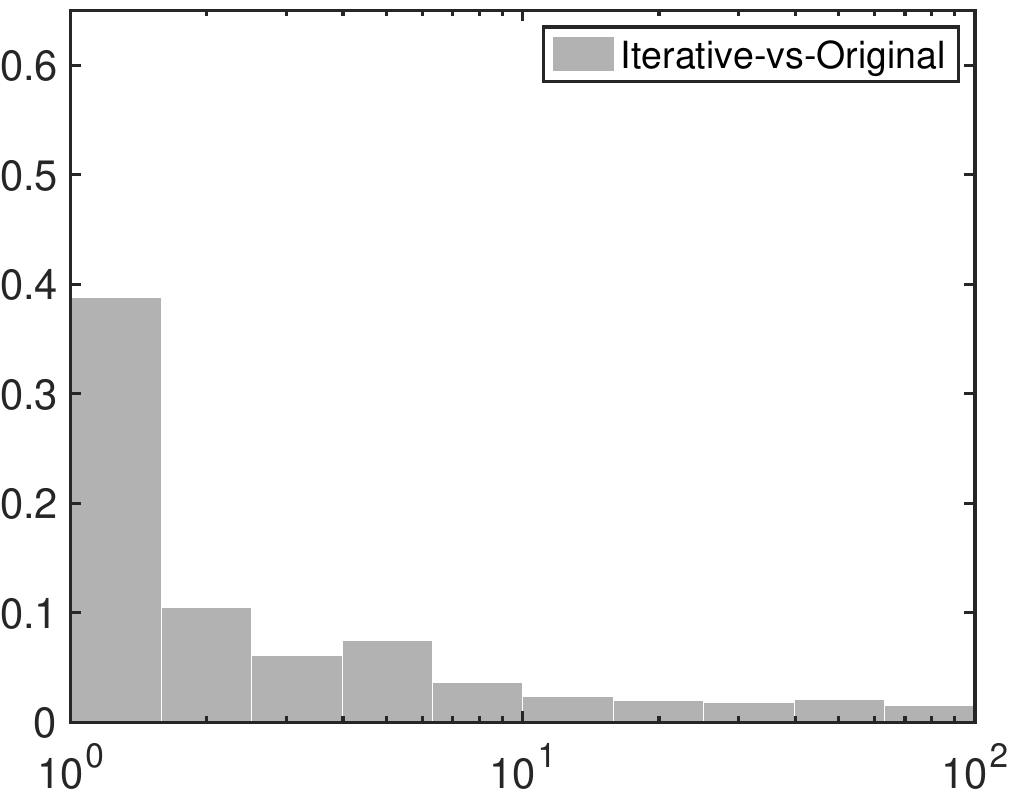}
\includegraphics[scale=0.22]{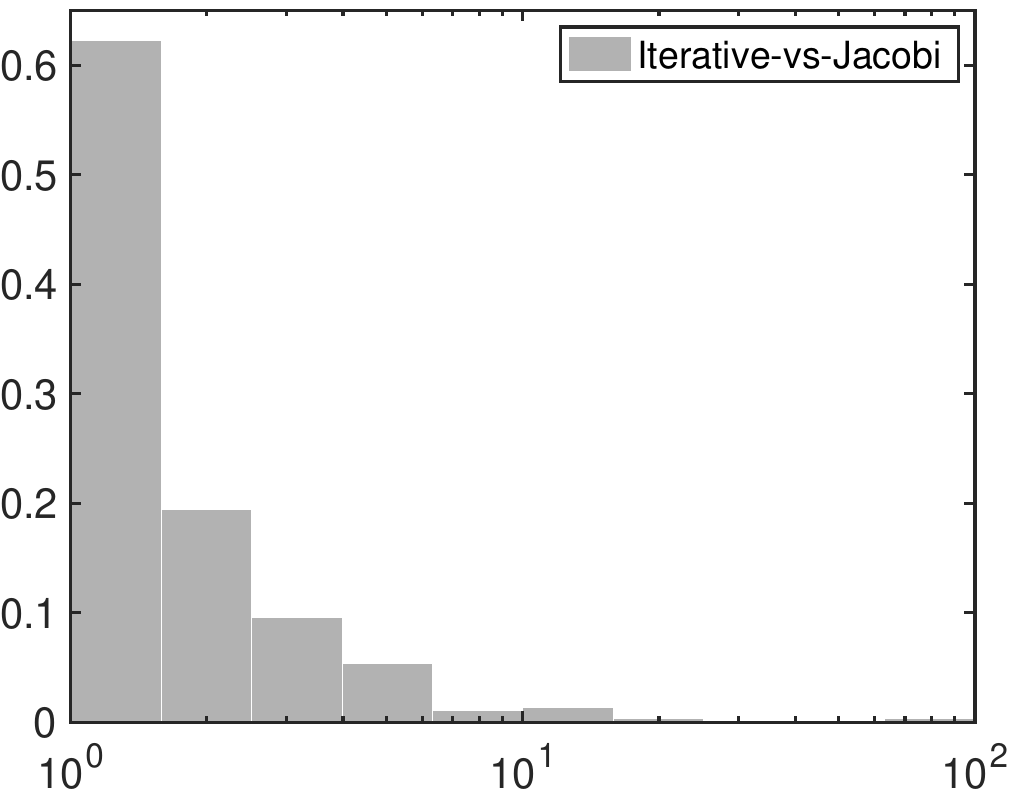}
\caption{Distribution of the ratio between condition numbers. \label{fig:improvement}
Both the subspace and iterative methods generally improve over Jacobi by a factor between 1 and 100.}
\end{figure}

\paragraph{Large ill-conditioned Hessian matrices.}
In this part of the experiment, we test our approach on large-scale linear systems arising in optimization problems, including
\begin{itemize}[leftmargin=30pt]
	\item Hessian matrices from regularized logistic regressions \cite{menard2002applied}, and
	\item normal equation from interior point methods \cite{ye2011interior}.
\end{itemize}
Both problems produce ill-conditioned matrices $\M = \nabla^2 f(\x)= \A \mathbf{B} \A^\top$ with fixed data matrix $\A$ and diagonal matrix $\mathbf{B}$ determined by variable $\x$. We test the effectiveness of our preconditioners using preconditioned conjugate gradient method. \\

\textbf{Table \ref{table:2}} shows the number of matrix-vector multiplications needed to solve the Newton system from logistic regression to $10^{-10}$. As is demonstrated in \textbf{Table \ref{table:2}}, the preconditioner developed by our algorithm reduces the total number of matrix-vector multiplications. This underscores the efficacy of our preconditioner.\\

Even more interestingly, \textbf{Figure \ref{fig:ipm}} plots the residual of PCG as the number of matrix-vector products increases. The matrix, taken from the normal equation of the interior point method close to convergence, is very ill-conditioned. Each line in the figure represents the convergence of PCG after an iterative preconditioning step from \textbf{Algorithm \ref{alg:2}}. For a fair comparison, we also count the number of matrix-vector multiplications required in the Lanczos procedure and delay the plot of PCG convergence by the same number of multiplications. Even if we perform a noticeable number of matrix-vector multiplications reducing the condition number, the total number of matrix-vector products will still be reduced.

\begin{table}[h]
\caption{Number of matrix-vector multiplications required to solve the logistic regression problem. \label{table:2}}
\centering
\begin{tabular}{ccccc}
\toprule
 $n$& 
 Iterative & Jacobi & Original\\
\midrule
 $10^4$& 2536 & 3202 & 2804\\
 $5\times10^5$& 2422 & 3004 & 2710\\
 $5\times10^5$& 2454 & 3054 & 2798\\
 $10^6$  & 1048 & 1246 & 1222\\
 $10^6$& 1106 & 1294 & 1266\\
\bottomrule
\end{tabular}	
\end{table}

\begin{figure}[h]
\centering
\includegraphics[scale=0.35]{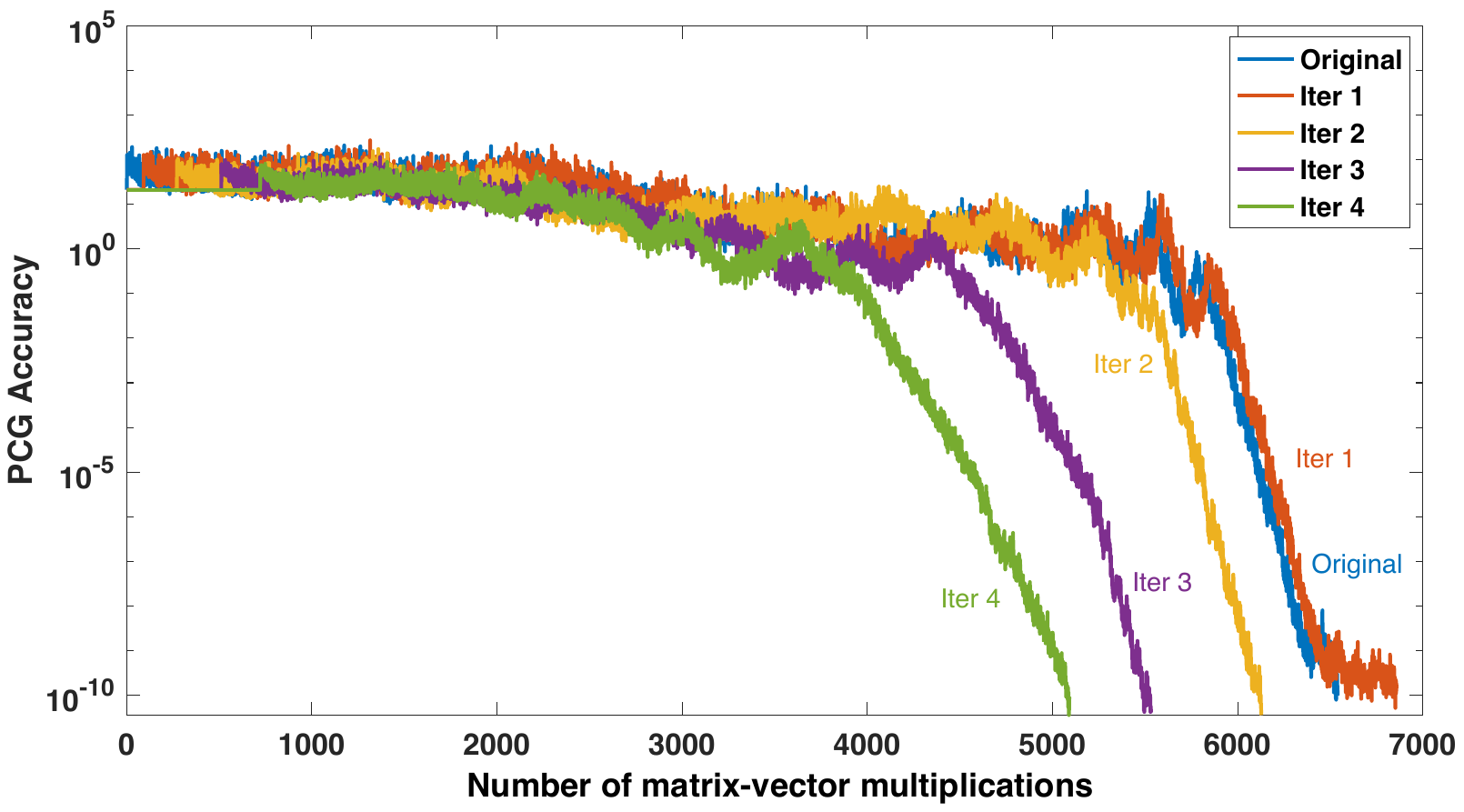}
\caption{Convergence of PCG on the normal equation from the interior point method. x-axis: number of matrix-vector multiplications including the cost of computing the preconditioner. y-axis: PCG accuracy. \label{fig:ipm} The normal equation is taken from \texttt{Netlib} instance \texttt{adlittle}.}
\end{figure}

\subsection{Extensions}
We conclude this section with some preliminary tests of our proposed extensions. These results indicate that our methodology could be independently interesting for other numerical analysis problems. However, it's important to note that these results are only applicable to \textit{a subset of} the matrices we tested.

\paragraph{Condition number estimation.}

Condition number estimation is important in many applications \cite{urschel2021uniform, ferng1991adaptive,cline1979estimate}. Computing the condition number is expensive, and estimating the condition number accurately is hard. Following \textbf{Remark \ref{rem:1}}, it is possible to apply the cutting plane method to \eqref{eqn:odp-subspace} with $\mathcal{S}_1 = \{\1\}$ to evaluate $\kappa(\M)$. We compare the following estimators:
\begin{itemize}[leftmargin=*]
	\item \emph{Condition number estimation using Lanczos extremal eigenvalue computation.} \\
We call \texttt{Matlab eigs} routine with maximum iteration 1200 for both $\hat{\lambda}_{\max}(\M)$ and $\hat{\lambda}_{\min}(\M)$. Condition number is then estimated by $\hat{\kappa}(\M) = \tfrac{\hat{\lambda}_{\max}(\M)}{\hat{\lambda}_{\min}(\M)}$.
	\item \emph{Condition number estimation using} \textbf{Algorithm \ref{alg:1}}.\\
 We solve \eqref{eqn:odp-subspace} and estimate $\hat{\kappa}(\M) = 1/\tau^\star$. Each separation oracle is limited to running 200 iterations, so the total number of matrix-vector multiplications is comparable.
\end{itemize}

We take the \texttt{bcsstk27} matrix from \texttt{SuiteSparse} and randomly subsample $n$ of its columns. For each $n \in \{100, 150, \ldots,400\}$, we record the relative error in condition number estimation as
$e =\frac{\kappa(\M) - \hat{\kappa}(\M)}{\kappa(\M)}$,

where $\kappa(\M)$ is computed using Matlab's \verb|cond| function.
We report results averaged over 10 matrices from the collection. \textbf{Figure \ref{fig:5}} (left) illustrates the comparison between the two methods. The cutting plane method achieves higher accuracy than the Lanczos estimator when the matrix size becomes larger.
\begin{figure} 
\centering
\includegraphics[scale=0.26]{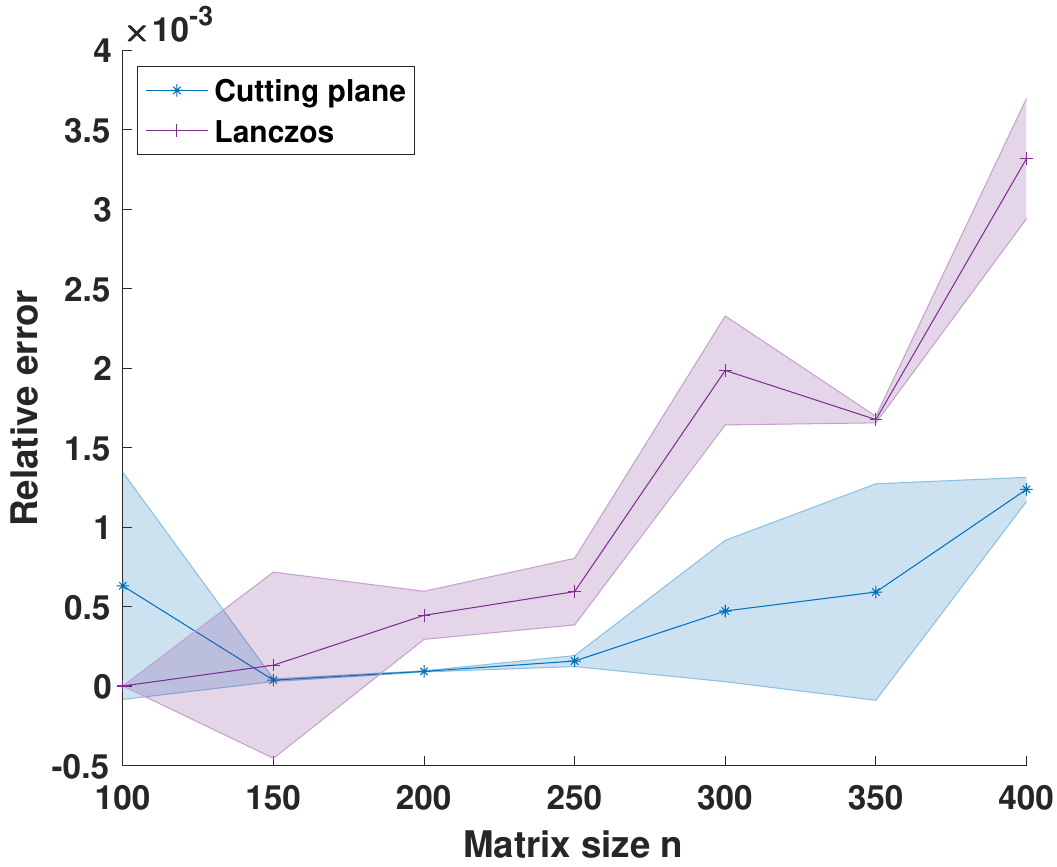} \quad\qquad
\includegraphics[scale=0.26]{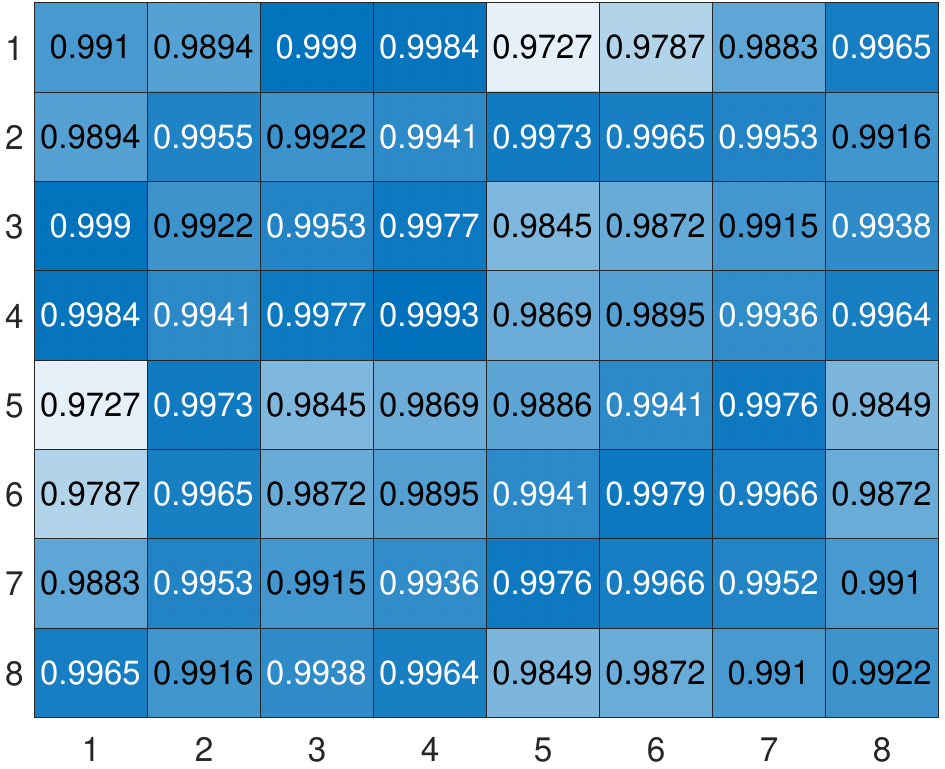}
\includegraphics[scale=0.26]{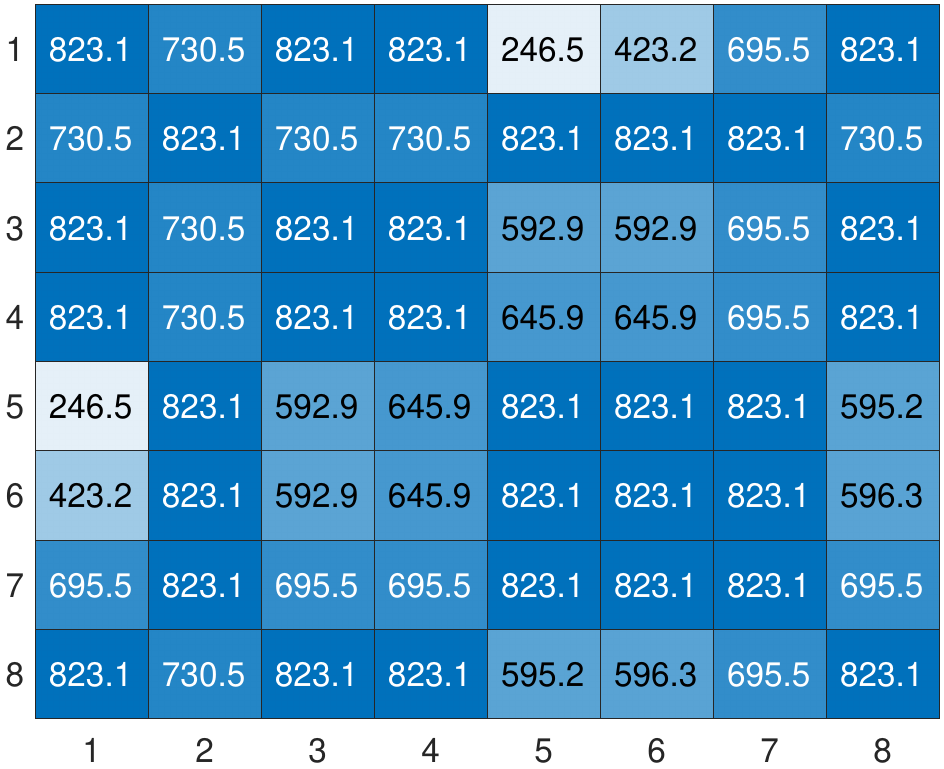}
\caption{Left: relative error in condition number estimation. Middle: absolute value of $\tmv_{\max} \tmv_{\max}^\top - \tmv_{\min} \tmv_{\min}^\top$ (elementwise). 
Right: optimal condition number of ``diagonal plus one off-diagonal'' sparsity pattern.
\label{fig:5}}
\end{figure}

\paragraph{Sparsity pattern for preconditioners.} This paper has focused on diagonal preconditioners. However, allowing sparse (and not simply diagonal) preconditioners can substantially reduce the condition number. 
The following experiment demonstrates that $|\X_1^\star - \X_2^\star| = |\tmv_{\max}\tmv_{\max}^\top - \tmv_{\min}\tmv_{\min}^\top|$, where $|\cdot|$ takes the element-wise absolute value of a matrix, 
can guide sparsity pattern selection. To illustrate this potential, we conduct the following experiment:

\begin{enumerate}[leftmargin=*, label=\textbf{Step \arabic*.}, ref=\textbf{Step \arabic*}]
	\item Generate an $8 \times 8$ positive definite matrix.
	\item Compute $|\tmv_{\max}\tmv_{\max}^\top - \tmv_{\min}\tmv_{\min}^\top|$.
	\item Compute the optimal preconditioner with ``diagonal plus one off-diagonal element''. For each off-diagonal element in the $8 \times 8$ matrix, we solve an optimal preconditioning problem to find the optimal preconditioned condition number with given sparsity pattern. \label{step3} 
	\item We display the results as $8 \times 8$ heat maps: the first records the magnitude of the elements of $|\tmv_{\max}\tmv_{\max}^\top - \tmv_{\min}\tmv_{\min}^\top|$, and the other figure records the condition numbers of the optimally preconditioned matrices from \ref{step3}.
\end{enumerate}

\textbf{Figure \ref{fig:5}} (right) suggests a strong correlation between $|\tmv_{\max}\tmv_{\max}^\top - \tmv_{\min}\tmv_{\min}^\top|$ and reductions in condition number. Interestingly, even with \textit{only one extra} off-diagonal element, the condition number can be reduced by a factor of 3
(from $823.1$ to $246.5$). 
We also see that the choice of the off-diagonal element is important.

\section{Conclusion}

We propose efficient algorithms to scalably find an approximate optimal diagonal preconditioner for a positive definite matrix. Our result builds on an SDP formulation of the optimal diagonal preconditioning problem, and we employ various techniques to enhance its scalability. Our result not only provides an efficient way for condition number optimization, but also yields results that may be of independent interest in numerical analysis.

\renewcommand \thepart{}
\renewcommand \partname{}

\bibliographystyle{plain}
\bibliography{ref.bib}
\doparttoc
\faketableofcontents
\part{}

\newpage
\appendix
\addcontentsline{toc}{section}{Appendix}
\part{Appendix} 
\parttoc

\section{Auxiliary results}

\begin{lem}[Residual of randomized projection \cite{derezinski2020precise}]\label{lem:aux2}
  
  Let $\bar{\tmP} = \I - \mathcal{D}^{\top} ( \mathcal{D} \mathcal{D}^{\top} ) \mathcal{D}, \mathcal{D} =
  \Z \Sigma \in \mathbb{R}^{k \times n}$, where ${\Z}$ has i.i.d. sub-gaussian
  rows with zero mean and bounded covariance matrix. $\Sigma$ is an $n$ by $n$
  positive semi-definte matrix. Then for $k < n$, there exists universal
  constant $L$ such that
  \[ ( 1 - \tfrac{L}{\sqrt{r}} ) \bar{\tmP}_{\gamma} \preceq
     \mathbb{E} [ \bar{\tmP} ] \preceq ( 1 +
     \tfrac{L}{\sqrt{r}} ) \bar{\tmP}_{\gamma}, \]
  where $\tmP_{\gamma} \assign ( \gamma \Sigma + \I )^{- 1}$ and
  $\gamma$ is chosen such that $\tmop{tr} ( \tmP_{\gamma}) = n - k$; $r = \frac{\tmop{tr} (\Sigma)}{\| \Sigma \|}$ is the stable rank
  of $\Sigma$.
\end{lem}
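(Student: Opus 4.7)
Since this statement is cited directly from \cite{derezinski2020precise}, the natural proof strategy mirrors theirs: identify a deterministic surrogate for $\mathbb{E}[\bar{\tmP}]$ and bound the deviation in the PSD order using concentration of quadratic forms in sub-gaussian vectors. (I also read the displayed formula as $\bar{\tmP} = \I - \mathcal{D}^{\top}(\mathcal{D}\mathcal{D}^{\top})^{-1}\mathcal{D}$, the orthogonal projector onto the kernel of $\mathcal{D}$, which is what makes the statement nontrivial.)

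First, I would introduce the resolvent $R(\lambda) := (\lambda\I + \mathcal{D}^{\top}\mathcal{D})^{-1}$ for $\lambda > 0$, and recover $\bar{\tmP}$ in the limit $\lambda \to 0^{+}$, since $\lambda R(\lambda)$ converges to the orthogonal projector onto $\ker(\mathcal{D})$ on the relevant subspace. Because $\mathcal{D} = \Z\Sigma$ and the rows of $\Z$ are independent sub-gaussian vectors, $R(\lambda)$ depends rationally on each row, and the Sherman--Morrison formula isolates the contribution of a single row in a clean way. A leave-one-out recursion then motivates the fixed-point ansatz that a deterministic matrix of the form $(\lambda\I + \psi(\lambda)\Sigma^{2})^{-1}$ should be a faithful surrogate for $\mathbb{E}[R(\lambda)]$, with $\psi(\lambda)$ determined by a self-consistency equation driven by the covariance of the rows.

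Second, I would push $\lambda \to 0^{+}$: the scaling forces $\psi(\lambda)\sim \gamma/\lambda$ for a positive constant $\gamma$, leaving the deterministic equivalent $\bar{\tmP}_{\gamma} = (\gamma\Sigma + \I)^{-1}$. The projector identity $\tmop{tr}(\bar{\tmP}) = n - k$ holds deterministically, and in the limit transfers to the calibration $\tmop{tr}(\bar{\tmP}_{\gamma}) = n - k$; since $\gamma \mapsto \tmop{tr}((\gamma\Sigma+\I)^{-1})$ is strictly decreasing and continuous on $(0,\infty)$ with limits $n$ and $0$, such $\gamma$ exists and is unique.

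Third, the two-sided bound $\mathbb{E}[\bar{\tmP}] = (1 \pm L/\sqrt{r})\bar{\tmP}_{\gamma}$ comes from applying the Hanson--Wright inequality to each quadratic form $\mathbf{z}_i^{\top} A \mathbf{z}_i$ that appears in the leave-one-out expansion, where $A$ is built from $R(\lambda)$ and $\Sigma$. The deviation is of Frobenius order $\|A\|_{F}$, and the ratio $\|A\|_{F}/\|A\|$ is controlled by the stable rank $r = \tmop{tr}(\Sigma)/\|\Sigma\|$, which produces the advertised $1/\sqrt{r}$ factor. The main obstacle is propagating these scalar concentrations into an operator-order statement about the full resolvent: one must maintain, on a high-probability event, uniform invertibility and spectral bounds on $\lambda\I + \mathcal{D}^{\top}\mathcal{D}$ so that Neumann expansions around the deterministic equivalent remain convergent. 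This is precisely the Stieltjes-transform style analysis carried out in \cite{derezinski2020precise}, which I would invoke rather than redo, inheriting the constant $L$ from their sub-gaussian tracking.
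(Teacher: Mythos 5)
The paper does not prove Lemma~\ref{lem:aux2} at all: it is stated as a black-box citation of \cite{derezinski2020precise}, with no argument given in the appendix. So there is no paper proof to compare against, and the right question is only whether your sketch is a faithful high-level account of what that reference does. On that score your roadmap is reasonable and consistent with the Stieltjes-transform/deterministic-equivalent machinery in \cite{derezinski2020precise}: regularize the residual projector by a resolvent $(\lambda\I + \mathcal{D}^\top\mathcal{D})^{-1}$, apply Sherman--Morrison leave-one-out to derive a self-consistent equation for the surrogate, send $\lambda\to 0^+$ to recover $\bar{\tmP}_\gamma$ with $\gamma$ calibrated by $\tmop{tr}(\bar{\tmP}_\gamma)=n-k$, and control fluctuations of the quadratic forms via Hanson--Wright, with the error scale set by the stable rank $r$. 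Your reading of the missing inverse in $(\mathcal{D}\mathcal{D}^\top)^{-1}$ is also the correct repair of the paper's typo.

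One small inconsistency worth flagging, which you glossed over: with the paper's normalization $\mathcal{D}=\Z\Sigma$ one has $\mathbb{E}[\mathcal{D}^\top\mathcal{D}]\propto\Sigma^2$, so the leave-one-out fixed point naturally involves $\Sigma^2$ (as your own intermediate ansatz $(\lambda\I+\psi(\lambda)\Sigma^2)^{-1}$ correctly records), yet the lemma's conclusion involves $\Sigma$ to the first power. The statement as written implicitly matches the convention $\mathcal{D}=\Z\Sigma^{1/2}$ used in \cite{derezinski2020precise}, and the paper is insensitive to this because it only invokes the lemma with $\Sigma=\I$; your sketch silently jumps from $\Sigma^2$ in the ansatz to $\Sigma$ in the limit without noting this. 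This is not a flaw in your proof idea, but you should reconcile the normalization rather than carry both forward.
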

\begin{lem}[Random projection]
  Let $\D \in \mathbb{R}^{n \times k}$ be a random matrix sampled from
  subgaussian distribution element-wise. Then any $\varepsilon \in (0,
  1)$, we have\label{lem:aux3}
  \[ \mathbb{P} \{ (1 - \varepsilon) \| \x \|^2 \leq \|
     \D \x \|^2 \leq (1 + \varepsilon) \| \x \|^2 \}
     \geq 1 - c \exp (- C \varepsilon^2 k) \]
  for some universal constants $c, C > 0$.
\end{lem}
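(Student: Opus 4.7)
The plan is to recognize this as the standard Johnson--Lindenstrauss concentration inequality for a sub-Gaussian random projection, and to derive it by reducing $\|\D\x\|^2$ to a sum of independent sub-exponential scalars and then invoking Bernstein's inequality. By homogeneity of both sides in $\x$, I would first assume $\|\x\| = 1$, so the claim becomes $\Pr\{|\|\D\x\|^2 - 1| > \varepsilon\} \leq c \exp(-C\varepsilon^2 k)$ under the natural normalization of the sub-Gaussian entries that makes $\mathbb{E}\|\D\x\|^2 = 1$.

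Writing $\|\D\x\|^2 = \sum_i Y_i^2$ with $Y_i$ the $i$-th coordinate of $\D\x$, each $Y_i$ is sub-Gaussian (a linear combination of independent sub-Gaussian scalars with weights $x_j$, hence with sub-Gaussian norm proportional to $\|\x\|=1$), so $Y_i^2$ is sub-exponential with a uniformly bounded parameter. Applying Bernstein's inequality to the centered sum $\sum_i (Y_i^2 - \mathbb{E} Y_i^2)$, and observing that for $\varepsilon \in (0,1)$ the Gaussian branch of the Bernstein tail dominates, yields
\[
\Pr\Big\{ \Big| \sum_i Y_i^2 - \mathbb{E} \sum_i Y_i^2 \Big| > \varepsilon \Big\} \leq 2 \exp(-C \varepsilon^2 k),
\]
which after absorbing the factor $2$ into $c$ is precisely the desired bound. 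A cleaner one-shot alternative would be to invoke the Hanson--Wright inequality directly on the quadratic form $\x^\top \D^\top \D \x$, which subsumes both steps at once.

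The main obstacle is purely bookkeeping: one has to fix a normalization convention for the sub-Gaussian entries that matches $\mathbb{E}\|\D\x\|^2 = \|\x\|^2$, and then propagate that scaling through the sub-Gaussian to sub-exponential conversion and through Bernstein's constants so that the exponent comes out as $C\varepsilon^2 k$ rather than some other dimension. The \emph{effective} dimension $k$ is dictated by the number of independent scalar summands carrying the concentration after normalization, which is also why a small amount of care is required regarding how $\D \in \mathbb{R}^{n\times k}$ is read and which dimension plays the role of the JL target. Since this is a classical Johnson--Lindenstrauss-type result, the cleanest option in the paper is to cite a standard reference on sub-Gaussian concentration rather than reproduce the constants from scratch.
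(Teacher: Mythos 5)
Your proposal is correct and is exactly the standard argument. The paper itself does not supply a proof of this lemma: it appears in the appendix's ``Auxiliary results'' section as a stated fact (alongside Lemma~\ref{lem:aux2}, which carries an explicit citation) and is invoked as a black-box Johnson--Lindenstrauss concentration bound in the proofs of Lemmas~\ref{lem:2}--\ref{lem:3}. Reducing to $\|\x\|=1$, writing $\|\D\x\|^2$ as a sum of squares of independent sub-Gaussian linear forms, noting each square is sub-exponential with parameter $O(\|\x\|^2)$, and applying Bernstein (or equivalently Hanson--Wright on the quadratic form $\x^\top\D^\top\D\x$) is precisely how one proves this, and for $\varepsilon\in(0,1)$ the sub-Gaussian branch of the Bernstein tail dominates as you say.

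The one place where your caution is not merely bookkeeping but substantive: the concentration exponent in any version of this lemma is controlled by the \emph{number of independent sub-exponential summands}, i.e.\ the dimension of the image vector $\D\x$. Read literally, with $\D\in\mathbb{R}^{n\times k}$ acting on $\x\in\mathbb{R}^k$, the image lives in $\mathbb{R}^n$ and the tail would be $\exp(-C\varepsilon^2 n)$, not $\exp(-C\varepsilon^2 k)$. The stated bound $\exp(-C\varepsilon^2 k)$ therefore forces one to read $\D$ as mapping \emph{into} $\mathbb{R}^k$ (equivalently, the lemma should be stated for $\D^\top\x$ with $\x\in\mathbb{R}^n$). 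This is in fact consistent with how the lemma is used downstream: in the proof of Lemma~\ref{lem:3} the matrix $\mathcal{D}\in\mathbb{R}^{n\times k}$ has columns given by the $k$ basis preconditioners, and the concentration on $\|(\mathcal{D}\mathcal{D}^\top-\I)\tmd^\star\|_\infty$ is driven by $k$ (with a union bound over $n$ coordinates contributing the extra factor of $n$). So your instinct to flag ``which dimension plays the role of the JL target'' is well placed; the paper's statement is oriented in the less conventional way.
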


\section{Proof of results in Section \ref{sec:subspace}}

In the proof of this section, we take $\D_0 = \I$ and associate it with an extra variable $z_0$ in \eqref{eqn:odp-subspace}, resulting in the augmented problem
\begin{equation*}
\label{eqn:odp-subspace-augmented}
\maxf{z_0, \z, \tau}  ~\tau^\star \quad \text{subject to}  \quad \M \tau^\star- \sum_{i = 1}^k \D_i z_i + \D_0 z_0 \preceq \0, \quad \sum_{i = 1}^k \D_i z_i + \D_0 z_0 \preceq \M.
\end{equation*}
And we define $\mathcal{D} := (\tmd_1, \tmd_2, \ldots, \tmd_k) \in \mathbb{R}^{n \times k}$ so that $\diag(\sum_{i = 1}^k \D_i z_i) = \sum_{i = 1}^k \tmd_i z_i = \mathcal{D} \z$.

\subsection{Proof of Lemma \ref{lem:2}}

We first prove the following lemma, which is a direct consequence of
\tmtextbf{Lemma \ref{lem:aux2}}.

\begin{lem}
  
  Let $\mathcal{D} \in \mathbb{R}^{n \times k}$ be a random matrix with i.i.d. subgaussian
  columns. Define the random projection matrix together with its residual projection
  \[ \tmP_{\mathcal{D}} \assign \mathcal{D} ( \mathcal{D}^{\top} \mathcal{D} )^{- 1} \mathcal{D}^{\top} \qquad
     \bar{\tmP}_{\mathcal{D}} \assign \I - \tmP_{\mathcal{D}} . \]
  Then there exists a universal constant $L > 0$ such that
  \[ ( 1 - \tfrac{L}{\sqrt{n}} ) ( 1 - \tfrac{k}{n} ) \I
     \preceq \mathbb{E} [ \bar{\tmP}_{\mathcal{D}} ] \preceq ( 1 +
     \tfrac{L}{\sqrt{n}} ) ( 1 - \tfrac{k}{n} ) . \]
\end{lem}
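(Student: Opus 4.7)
The plan is to obtain the claim as an immediate specialization of Lemma \ref{lem:aux2} with $\Sigma = \I$. The only real work will be matching the conventions of the two statements and then computing the deterministic equivalent $\tmP_\gamma$ in closed form.

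First I would reconcile dimensions. Lemma \ref{lem:aux2} is stated for a matrix living in $\mathbb{R}^{k\times n}$ with i.i.d. subgaussian rows, whereas here $\mathcal{D} \in \mathbb{R}^{n\times k}$ has i.i.d. subgaussian columns. Taking the ``$\mathcal{D}$'' of aux2 to be $\mathcal{D}^\top$ identifies the two settings, and under this identification the residual projection $\I - \mathcal{D}^\top(\mathcal{D}\mathcal{D}^\top)^{-1}\mathcal{D}$ of aux2 (read with $\mathcal{D}\leftarrow \mathcal{D}^\top$) coincides exactly with $\bar{\tmP}_{\mathcal{D}} = \I - \mathcal{D}(\mathcal{D}^\top \mathcal{D})^{-1}\mathcal{D}^\top$ in our lemma. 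Assuming, as is standard for ``i.i.d.\ subgaussian columns,'' that those columns are centered and isotropic, I would then set $\Sigma = \I$ and $\Z = \mathcal{D}^\top$ in the notation of aux2.

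Second I would evaluate $\tmP_\gamma$ and the stable rank explicitly. With $\Sigma = \I$, the defining formula $\tmP_\gamma = (\gamma \Sigma + \I)^{-1}$ collapses to $\tmP_\gamma = \tfrac{1}{1+\gamma}\I$. The trace normalization $\tr(\tmP_\gamma) = n-k$ forces $\gamma = k/(n-k)$, and hence
\[ \tmP_\gamma \;=\; \Big(1 - \tfrac{k}{n}\Big)\I. \]
Likewise the stable rank of the identity is $r = \tr(\I)/\|\I\| = n$, so $\sqrt{r} = \sqrt{n}$. Substituting both expressions into the two-sided sandwich of Lemma \ref{lem:aux2} yields
\[ \Big(1 - \tfrac{L}{\sqrt{n}}\Big)\Big(1 - \tfrac{k}{n}\Big)\I \;\preceq\; \mathbb{E}[\bar{\tmP}_{\mathcal{D}}] \;\preceq\; \Big(1 + \tfrac{L}{\sqrt{n}}\Big)\Big(1 - \tfrac{k}{n}\Big)\I, \]
which is the claim.

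The main obstacle, to the extent there is one, is almost entirely notational. Lemma \ref{lem:aux2} phrases its sandwich in terms of a symbol ``$\bar{\tmP}_\gamma$'' while only $\tmP_\gamma$ is formally defined; before concluding I would confirm that in the isotropic case $\Sigma = \I$ this discrepancy is immaterial, because $\tmP_\gamma$ is then a scalar multiple of $\I$ and one can pass between the two conventions at the cost of absorbing a harmless constant into $L$. A secondary point is the implicit isotropy assumption on the subgaussian columns: if the columns have bounded but non-identity covariance, the same reduction still goes through by choosing $\Sigma$ to be that covariance, but the deterministic equivalent $\tmP_\gamma$ is then no longer a clean multiple of $\I$, and recovering the stated bound requires the isotropic specialization.
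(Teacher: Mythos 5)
Your proposal is correct and follows essentially the same route as the paper: specialize Lemma \ref{lem:aux2} to $\Sigma = \I$, solve the trace condition $\tmop{tr}(\tmP_\gamma) = n-k$ for $\gamma$ (the paper writes $\gamma = \frac{n}{n-k}-1$, which equals your $k/(n-k)$), obtain $\tmP_\gamma = (1-\tfrac{k}{n})\I$, and note the stable rank $r=n$. The paper's proof is terser and does not dwell on the $\bar{\tmP}_\gamma$ vs.\ $\tmP_\gamma$ notational slip or the isotropy caveat, but your reading matches how the paper actually uses the bound.
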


\begin{proof}
According to \tmtextbf{Lemma \ref{lem:aux2}}. We take $\Sigma = \I, r =
\frac{\tmop{tr} ( \I )}{\| \I \|} = n$. Then
\[ \tmop{tr} ( \tmP_{\gamma} ) = \tmop{tr} ( ( \gamma
   \Sigma + \I )^{- 1} ) = \tfrac{\tmop{tr} ( \I
   )}{\gamma + 1} = \tfrac{n}{\gamma + 1} = n - k. \]
Solving for $\gamma$ gives $\gamma = \frac{n}{n - k} - 1$ and plugging it
back, we get $\tmP_{\gamma} = ( 1 - \frac{k}{n} ) \I$ and this
completes the proof.
\end{proof}

Our analysis is based on the following intuition: we can construct a feasible
dual solution $( \tau', \z', z_0' )$ such that $\z' = \tmP_{\mathcal{D}}
\tmd^{\star}$ is the optimal solution to the following least squares problem
\[ \z' = \arg \min_{\z}  \{ \| \mathcal{D} \z - \tmd^{\star} \|^2 \} \]
When $k$ increases, $\| \tmd^{\star} - \z' \| = \|
\bar{\tmP}_{\mathcal{D}} \tmd^{\star} \|$ decreases at a rate of $\sqrt{1 - k / n}$ as
according to \tmtextbf{Lemma \ref{lem:aux3}}. Moreover, by properly choosing $z_0'$
and $\tau'$, we can find a dual feasible solution, thus getting guarantee
for optimality. Using \tmtextbf{{Lemma \ref{lem:aux3}}}, we have
\[ \mathbb{E} [ \| \bar{\tmP}_{\mathcal{D}} \tmd^{\star} \|^2
   ] =\mathbb{E} [ \langle \tmd^{\star}, \bar{\tmP}_{\mathcal{D}}
   \tmd^{\star} \rangle ] = \langle \tmd^{\star}, \mathbb{E}
   [ \bar{\tmP}_{\mathcal{D}} ] \tmd^{\star} \rangle \leq \|
   \tmd^{\star} \|^2 \big( 1 + \tfrac{L}{\sqrt{n}} \big) \big( 1 -
   \tfrac{k}{n} \big) =: e_k \]
where the first equality uses the property of projection. Next by Markov's
inequality, for $\alpha > 1$, we have
\[ \mathbb{P} \{ \| \bar{\tmP}_{\mathcal{D}} \tmd^{\star} \| \geq
   \alpha \sqrt{e_k} \} \leq \mathbb{P} \{ \|
   \bar{\tmP}_{\mathcal{D}} \tmd^{\star} \| \geq \alpha \mathbb{E} [
   \| \bar{\tmP}_{\mathcal{D}} \tmd^{\star} \| ] \} \leq
   \frac{1}{\alpha} . \]
Conditioned on the event $\| \bar{\tmP}_{\mathcal{D}} \tmd^{\star} \|
\leq \alpha \sqrt{e_k}$, letting $\z' = \tmP_{\mathcal{D}} \tmd^{\star}$, we have
\[ \sum_{i = 1}^k \D_i z_i' = \D^{\star} + \Big( \sum_{i = 1}^k \D_i z_i' -
   \D^{\star} \Big) \preceq \D^{\star} + \alpha \sqrt{e_k} \cdummy \I, \]
where we use the fact that
\[\textstyle \| \sum_{i = 1}^k \D_i z_i' - \D^{\star}\| = \| \mathcal{D} \z' -
   \tmd^{\star} \|_{\infty} \leq \| \mathcal{D} \z' - \tmd^{\star} \| =
   \| \bar{\tmP}_{\mathcal{D}} \tmd^{\star} \| \leq \alpha \sqrt{e_k}.\]
Now we take $z_0' = - \alpha \sqrt{e_k}$ and
\[ \sum_{i = 1}^k \D_i z_i' + \I \cdummy z_0' = \D^{\star} + \Big( \sum_{i =
   1}^k \D_i z_i' - \D^{\star} \Big) - \alpha \sqrt{e_k} \cdot \I \preceq
   \D^{\star} \preceq \M . \]
On the other hand, we take $\tau' = \tau^{\star} - \frac{2 \alpha
\sqrt{e_k}}{\lambda_{\min} ( \M )}$ and deduce that
\[ \sum_{i = 1}^k \D_i z_i' + \I \cdummy z_0' = \D^{\star} + \Big( \sum_{i =
   1}^k \D_i z_i' - \D^{\star} \Big) - \alpha \sqrt{e_k} \cdummy \I \succeq
   \Big( \tau^{\star} - \frac{2 \alpha \sqrt{e_k}}{\lambda_{\min} ( \M
   )} \Big) \M . \]
Now that $( \tau', \z', z_0 )$ is a feasible solution, we know that
by optimality condition $\tau_k \geq \tau'$ and
\[ \mathbb{P} \{ \tau_k \geq \tau^{\star} - \frac{20
   \sqrt{e_k}}{\lambda_{\min} ( \M )} \} \geq 1 -
   \frac{1}{\alpha} . \]
Finally we set $\alpha = 20$ to complete the proof.

\

\subsection{Proof of Lemma \ref{lem:3}}
Our proof is inspired by \cite{liberti2021random}. Define $\z' = \mathcal{D}^{\top} \tmd^{\star}$, and we show that with high
probability,
\begin{align}
  \M \tau^{\star} - \sum_{i = 1}^k \D_i z_i' \preceq{} & \varepsilon \|
  \tmd^{\star} \|, \quad
  \sum_{i = 1}^k \D_i z_i' \preceq{}  \M + \varepsilon \| \tmd^{\star}
  \| . \nonumber
\end{align}

Define $\bs_\mathcal{D}^1 \assign \M \tau^{\star} - \sum_{i = 1}^k \D_i z_i'$ and $\bs^1
= \M \tau^{\star} - \sum_{i = 1}^n \e_i \e_i^{\top} d^{\star}_i$. We
successively deduce that, for any $\x \in \mathbb{R}^n$,
\begin{align}
  \langle \x, \bs_\mathcal{D}^1 \x \rangle ={} & \langle \x, \Big( \M
  \tau^\star- \sum_{i = 1}^k \D_i z_i' \Big) \x \rangle \nonumber\\
  ={} & \langle \x, \M \x \rangle \tau^\star- \sum_{i = 1}^k \langle
  \x, \D_i \x \rangle z_i' \nonumber\\
  ={} & \langle \x, \M \x \rangle \tau^\star- \sum_{i = 1}^k \langle
  \x, \sum_{j = 1}^n \e_j \e_j^{\top} d_{j i} \x \rangle z_i' \label{proof-4}
  \\
  ={} & \langle \x, \M \x \rangle \tau^\star- \sum_{j = 1}^n \langle
  \x, \e_j \e_j^{\top} \x \rangle \sum_{i = 1}^k d_{j i} z_i'
  \nonumber\\
  ={} & \langle \x, \M \x \rangle \tau^\star- \sum_{j = 1}^n \langle
  \x, \e_j \e_j^{\top} \x \rangle [ \mathcal{D} \mathcal{D}^{\top} \tmd^{\star}
  ]_j, \label{proof-5}
\end{align}
where in \eqref{proof-4} $d_{j i}$ denotes the $j$-th diagonal element of $\D_i$; \eqref{proof-5} uses the definition of matrix-vector multiplication and that $\z' = \mathcal{D}^\top \tmd^\star$. Now further assume that $\| \x \| = 1$. We have
\begin{align}
  \langle \x, \bs^1 \x \rangle ={} & \langle \x, ( \bs^1 -
  \bs_\mathcal{D}^1 + \bs_\mathcal{D}^1 ) \x \rangle \nonumber\\
  ={} & \langle \x, \bs_\mathcal{D}^1 \x \rangle + \langle \x, (
  \bs^1 - \bs_\mathcal{D}^1 ) \x \rangle \nonumber\\
  ={} & \langle \x, \bs_\mathcal{D}^1 \x \rangle + \sum_{j = 1}^n \langle
  \x, \e_j \e_j^{\top} \{ [ \mathcal{D} \mathcal{D}^{\top} \tmd^{\star} ]_j -
  d^{\star}_j \} \x \rangle \nonumber\\
  \leq{} & \langle \x, \bs_\mathcal{D}^1 \x \rangle + \| ( \mathcal{D}
  \mathcal{D}^{\top} - \I ) \tmd^{\star} \|_{\infty}, \nonumber
\end{align}
where the last inequality is by
\[ \Big\| \sum_{j = 1}^n \e_j \e_j^{\top} \big\{ [ \mathcal{D} \mathcal{D}^{\top}
   \tmd^{\star} ]_j - d^{\star}_j \big\} \Big\| = \|\text{diagm}
   ( ( \mathcal{D} \mathcal{D}^{\top} - \I ) \tmd^{\star} ) \| =
   \| ( \mathcal{D} \mathcal{D}^{\top} - \I ) \tmd^{\star} \|_{\infty} . \]
Therefore, by the property of random projection, we have, with probability $1 - 3
n c \exp (- C \varepsilon^2 k)$, that $\| ( \mathcal{D} \mathcal{D}^{\top} - \I
) \tmd^{\star} \|_{\infty} \leq \varepsilon \| \tmd^{\star}
\|$, which further implies that $\langle \x, \bs^1 \x \rangle \leq \langle \x, \bs_\mathcal{D}^1 \x
   \rangle + \varepsilon = \langle \x, ( \bs_\mathcal{D}^1 + \varepsilon
   \| \tmd^{\star} \| \I ) \x \rangle$
for all $\x, \| \x \| = 1$. Finally we deduce that
\[ \lambda_{\max} ( \bs^1_\mathcal{D} ) = \max_{\| \x \| = 1} 
   \langle \x, \bs^1_D \x \rangle \leq \max_{\| \x \| =
   1}  \langle \x, ( \bs_\mathcal{D}^1 + \varepsilon \| \tmd^{\star}
   \| \I ) \x \rangle = \lambda_{\max} ( \bs_\mathcal{D} )
   + \varepsilon \| \tmd^{\star} \| \leq \varepsilon \|
   \tmd^{\star} \|, \]
where we use the relation $\bs_\mathcal{D} \preceq \0$ since it is feasible for the
original problem. Overall we have shown that
\[\textstyle \mathbb{P} \{ \M \tau^\star- \sum_{i = 1}^k \D_i z_i' \preceq \varepsilon
   \| \tmd^{\star} \| \} \geq 1 - 3 n c \exp (- C
   \varepsilon^2 k) \]
For the other block, we can similarly define $\bs_\mathcal{D}^2 \assign \M -
\sum_{i = 1}^k \D_i z_i'$ and $\bs^2 = \M - \sum_{i = 1}^n \e_i
\e_i^{\top} d^{\star}_i$ and apply the same argument to show that
\[ \textstyle\mathbb{P} \{ \sum_{i = 1}^k \D_i z_i' - \M \preceq \varepsilon
   \| \tmd^{\star} \| \} \geq 1 - 3 n c \exp (- C
   \varepsilon^2 k) \]
Taking union bound over the two events, we have, with probability $1 - 6 n c
\exp (- C \varepsilon^2 k)$, that
\begin{align}
  \M \tau^\star- \sum_{i = 1}^k \D_i z_i' \preceq{} & \varepsilon \| \tmd^{\star}
  \| \cdummy \I \nonumber\\
  \sum_{i = 1}^k \D_i z_i' \preceq{} & \M + \varepsilon \| \tmd^{\star}
  \| \cdummy \I \nonumber
\end{align}
and this completes the first part of the proof. Next we derive a bound for the
dual objective by showing that a dual feasible solution $( \tau', \z',
z_0' )$ exists. By choosing $z_0' = - \varepsilon \| \tmd^{\star}
\|$, we have
\[ \sum_{i = 1}^k \D_i z_i' + \I \cdot z_0' - \M = \sum_{i = 1}^k \D_i z_i' - \M -
   \varepsilon \| \tmd^{\star} \| \cdummy \I \preceq \varepsilon
   \| \tmd^{\star} \| \cdummy \I - \varepsilon \| \tmd^{\star}
   \| \cdummy \I = \0 . \]
Then we choose $\tau' = \tau^{\star} - \frac{2 \varepsilon \| \tmd^{\star}
\|}{\lambda_{\min} ( \M )}$ and notice that
\begin{align}
	& \M \tau' - \sum_{i = 1}^k \D_i z_i' - z_0 \I \\
	={} & \M \tau^{\star} - \frac{2
   \varepsilon \| \tmd^{\star} \|}{\lambda_{\min} ( \M )}
   \M - \sum_{i = 1}^k \D_i z_i' + \varepsilon \| \tmd^{\star} \|
   \cdummy \I \nonumber \\
   \preceq{} & 2 \varepsilon \| \tmd^{\star} \| \cdummy \I +
   \frac{2 \varepsilon \| \tmd^{\star} \|}{\lambda_{\min} ( \M
   )} \M \preceq \0 \nonumber
\end{align}
Since $\tau_k$ is the optimal value of the problem, we have
$\tau_k \geq \tau' = \tau^{\star} - \frac{2 \varepsilon \| \tmd^{\star}
   \|}{\lambda_{\min} ( \M )}$
and this completes the proof.

\subsection{Proof of Theorem \ref{thm:2}}

Recall that $\tau_k^{\star} \geq \kappa ( \M )^{- 1}$ since $\I$ is
included in the subspace. Then we apply union bound to get, with probability
$0.8$, that
\[ \tau_k \geq \max \{ \tau^{\star} - 2 \sqrt{\tfrac{\log (60 s n)}{S}}
   \tfrac{\| \tmd^{\star} \|}{\lambda_{\min} ( \M )
   \sqrt{k}}, \tau^{\star} - 20 \sqrt{1 + L n^{- 1 / 2}} \tfrac{\|
   \tmd^{\star} \|}{\lambda_{\min} ( \M )} \sqrt{1 -
   \tfrac{k}{n}} \} \]
Taking inverse on both sides, we have
\[ \kappa_k \leq \min \Big\{ \tfrac{\kappa^{\star}}{1 - G_1 \kappa^{\star}
   \sqrt{\frac{\log n}{k}}}, \tfrac{\kappa^{\star}}{1 - G_2 \kappa^{\star}
   \sqrt{1 - \frac{k}{n}}}, \kappa ( \M ) \Big\} \]
and this completes the proof.

\section{Additional Experiments}

In this section, we provide three new sets of
experiments that demonstrate the cases where our proposed approach can apply.

\subsection{Amortize Time across Multiple Solves}

In this experiment, we simulate the scenario where linear system $\mathbf{Mx}
= \mathbf{b}$ is solved for multiple $\mathbf{b}$ but the same $\mathbf{M}$.
Denote
\begin{itemize}[leftmargin=15pt]
  \item $T_{\tmop{pre}}$
  
  the time for solving for finding an approximate optimal diagonal
  preconditioner $\hat{\mathbf{P}}$
  
  \item $T_{\tmop{IC}}$
  
  the time for solving for finding a no-fill incomplete Cholesky decomposition
  implemented by \texttt{MATLAB ichol}.
  
  \item $T_{\tmop{solve}}$
  
  the average solution time of solving $\mathbf{Mx} = \mathbf{b}$ with Jacobi
  preconditioner
  
  \item $T_{\tmop{Psolve}}$
  
  the average solution time of solving $\mathbf{Mx} = \mathbf{b}$ with
  $\hat{\mathbf{P}}$
  
  \item $T_{\tmop{ICsolve}}$
  
  the average solution time of solving $\mathbf{Mx} = \mathbf{b}$ with
  incomplete Cholesky preconditioner
\end{itemize}
Each of $T$ above is averaged over 100 solves with different right-hand sides.

If $T_{\tmop{Psolve}} < T_{\tmop{solve}}$, theoretically after $N_1 =
\lceil \frac{T_{\tmop{pre}}}{T_{\tmop{solve}} - T_{\tmop{Psolve}}}
\rceil$ solves our approach becomes faster than using Jacobi
preconditioner. Similarly, if $T_{\tmop{Psolve}} < T_{\tmop{ICsolve}}$, our
approach becomes faster than the incomplete Cholesky preconditioner after at most
$N_2 = \lceil \frac{T_{\tmop{pre}} - T_{\tmop{IC}}}{T_{\tmop{ICsolve}} -
T_{\tmop{Psolve}}} \rceil$ solves. Note that if $T_{\tmop{Psolve}} \geq
T_{\tmop{ICsolve}}$, $\hat{\mathbf{P}}$ is inferior to an incomplete Cholesky
preconditioner when there is no memory restriction. We consider a special case
where preconditioner subspace is taken to be $\tmop{span} \{ \mathbf{I},
\tmop{diag} (\mathbf{M}) \}$. \ PCG is allowed to run 1000 iterations. If PCG
fails to converge to $10^{- 5}$ tolerance after 1000 iterations, we record the
time as $+ \infty$ and let $N = 1$.

\paragraph{Random Matrices}Our first experiment runs on random matrices as in
the \tmtextbf{Section 6}, where we take $n = 10^3$ and $\sigma = 1.0$. PCG with an incomplete Cholesky preconditioner does not
converge in 1000 iterations for the tested random matrices. \tmtextbf{Figure 1} displays the distribution of
$N_1$.\\

\begin{figure}[h]
\centering
{\includegraphics[scale=0.3]{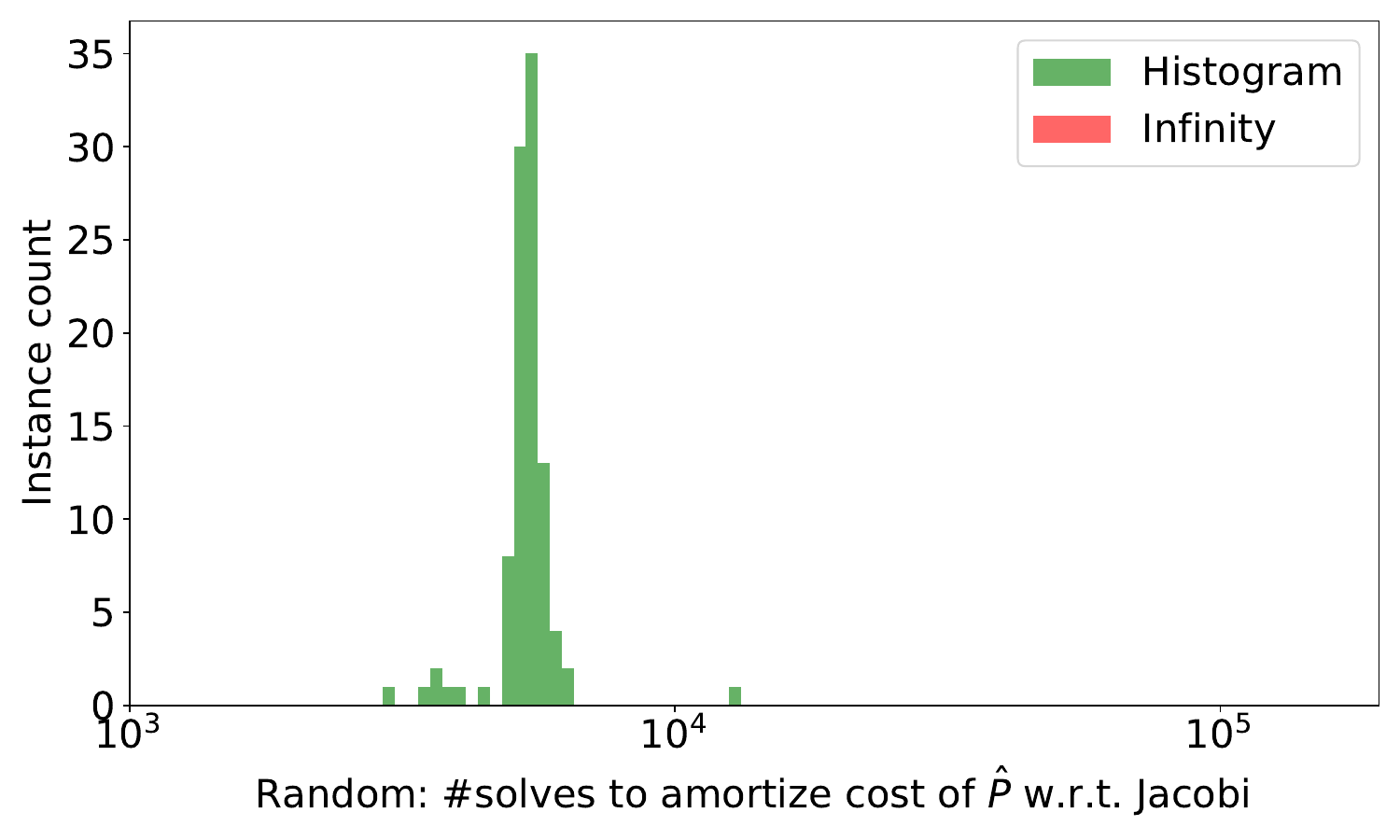}}
  \caption{Frequency plot of \#solves to amortize the cost of finding
  $\hat{\mathbf{P}}$ \label{fig:add-1}}
\end{figure}

According to the figure, we see that in most cases, after round 5000 solves, the
solution time improvement of finding $\hat{\mathbf{P}}$ can be amortized
(with respect to Jacobi preconditioner).

\paragraph{SuiteSparse} We also test on 300 matrices from
\texttt{SuiteSparse} and plot the distribution of $N_1$ and $N_2$ over
these instances.

\subsection{More matrices from IPM normal equation}

This section gives more normal matrix instances from
\texttt{NETLIB/MIPLIB}. We transform each problem into standard form,
apply the standard primal-dual interior point method till $10^{- 8}$ accuracy.
Then we take the normal matrix at iteration $\min \{ 10, T \}$, where $T$ is
the IPM iteration count

\begin{figure}[h]
\centering
\includegraphics[scale=0.2]{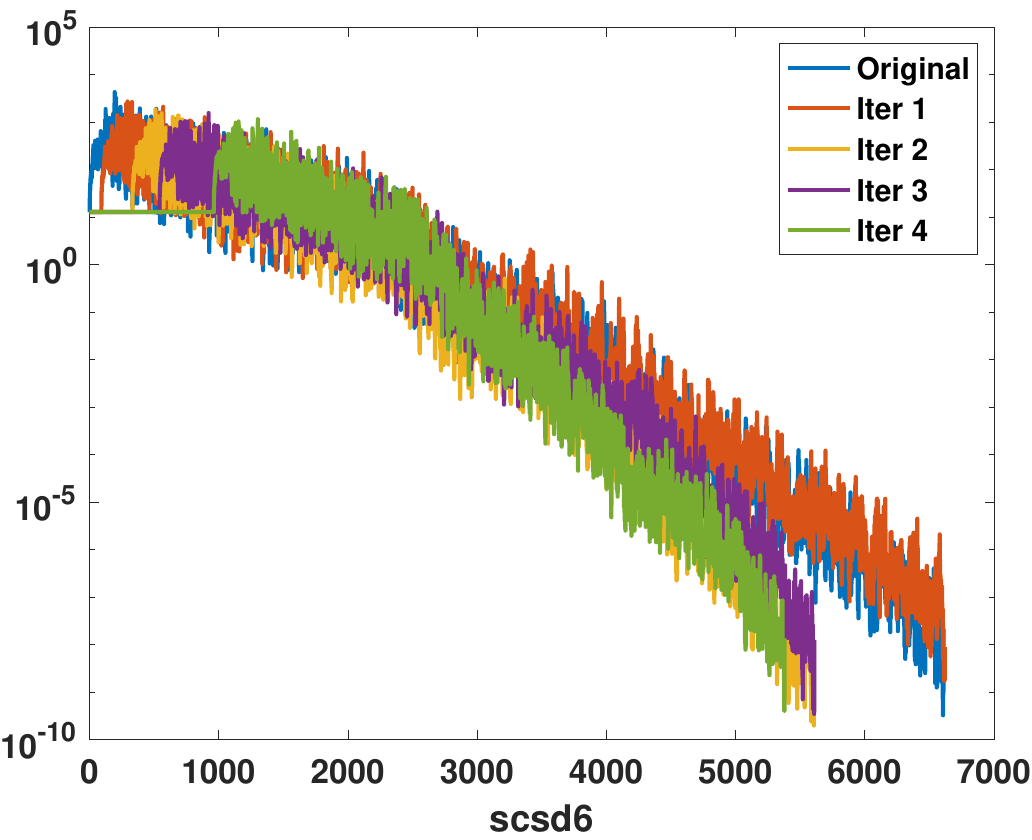}
{\includegraphics[scale=0.2]{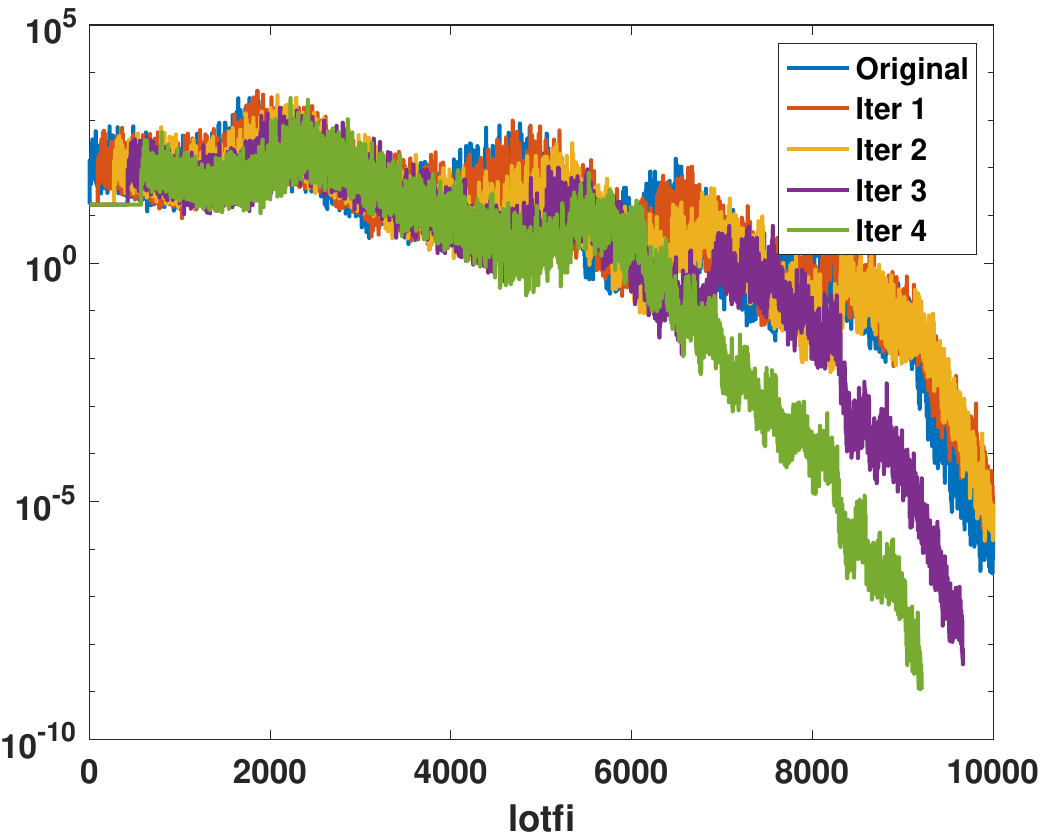}}{\includegraphics[scale=0.2]{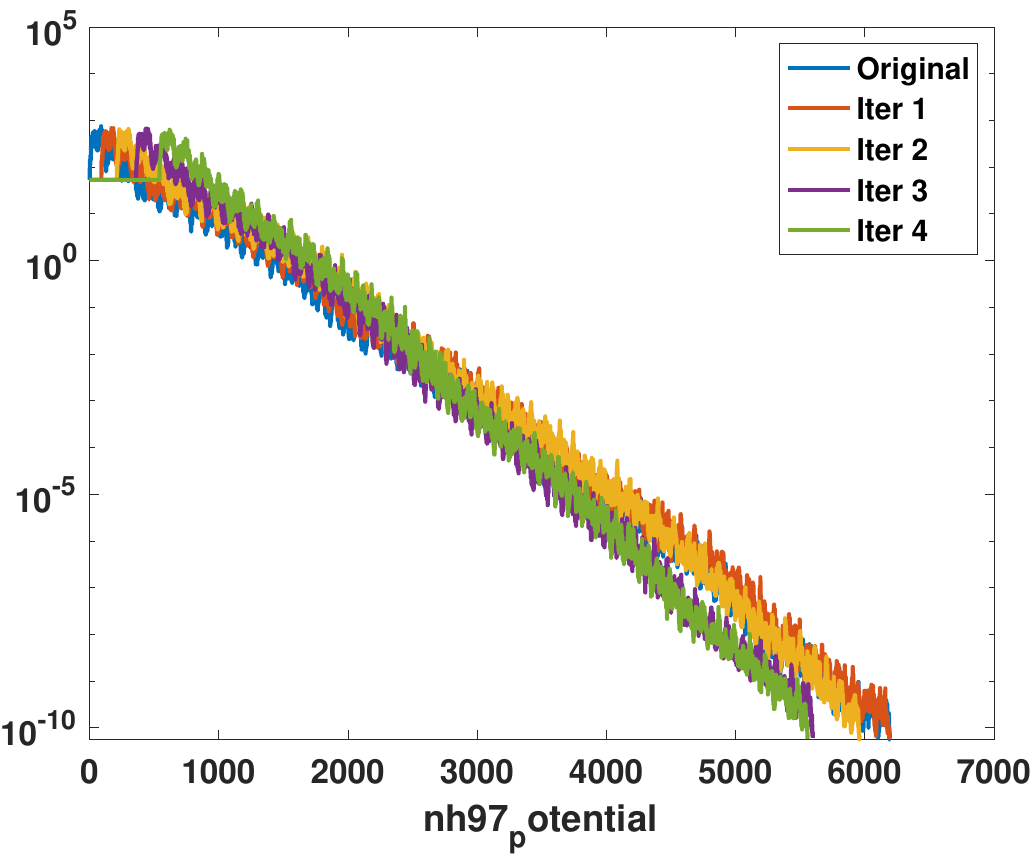}}{\includegraphics[scale=0.2]{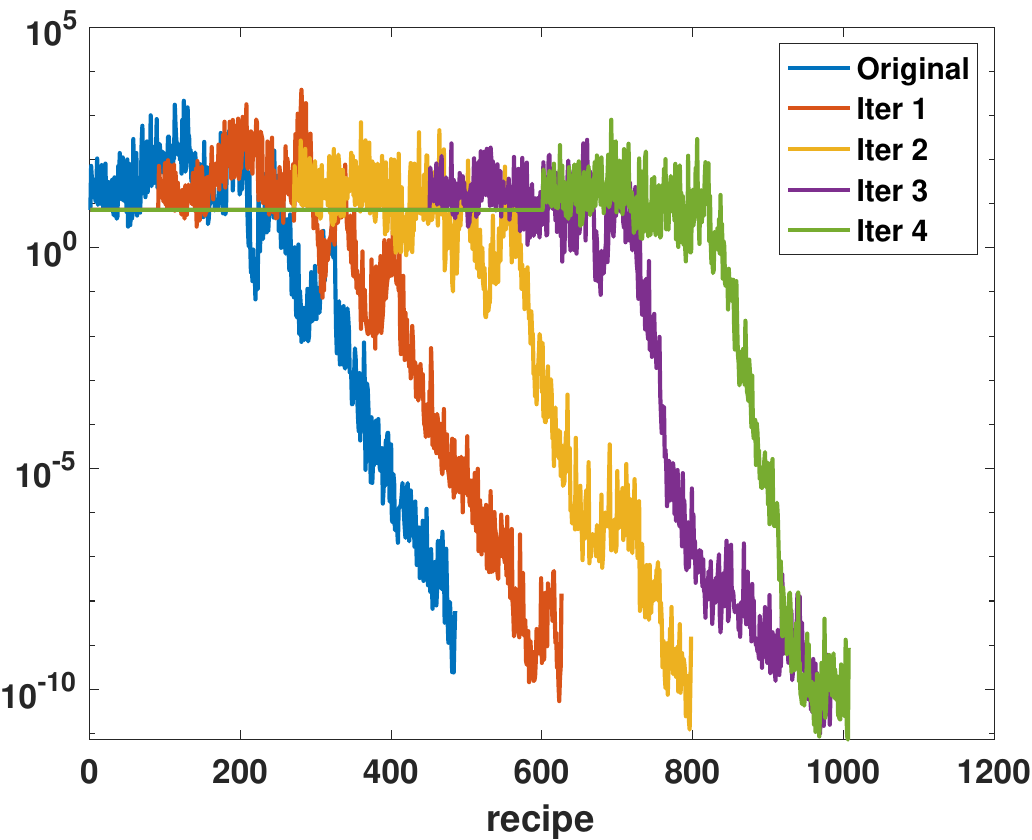}}
  
  {\includegraphics[scale=0.2]{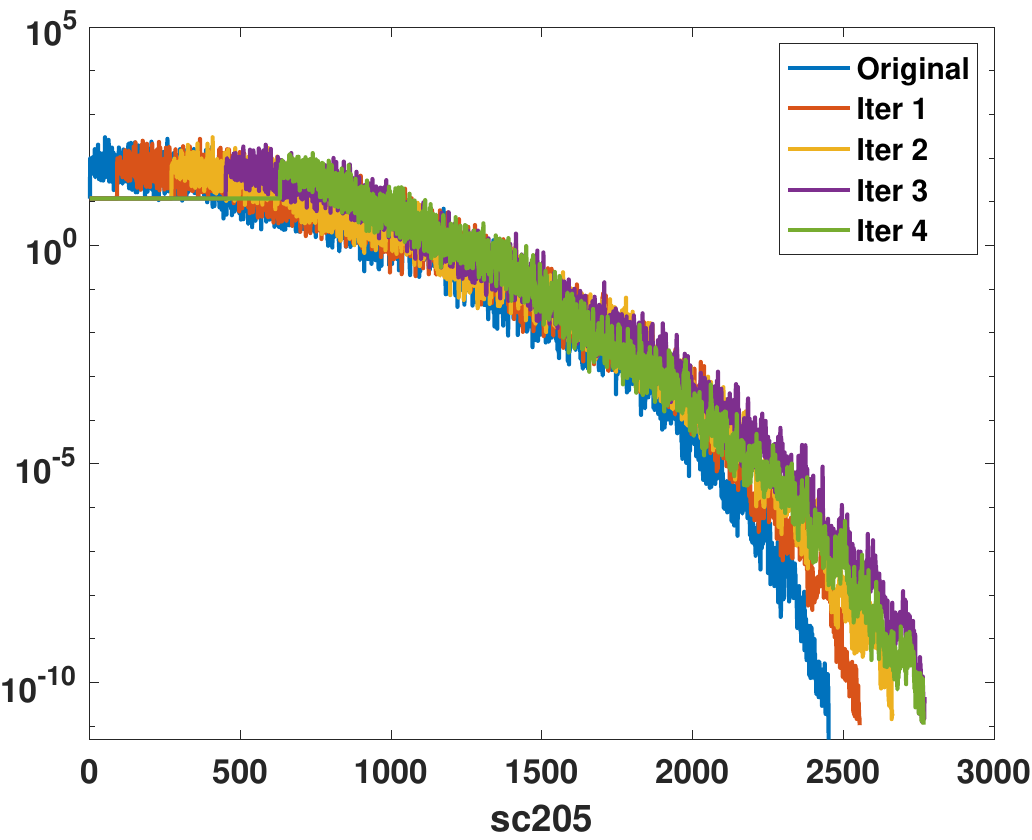}}{\includegraphics[scale=0.2]{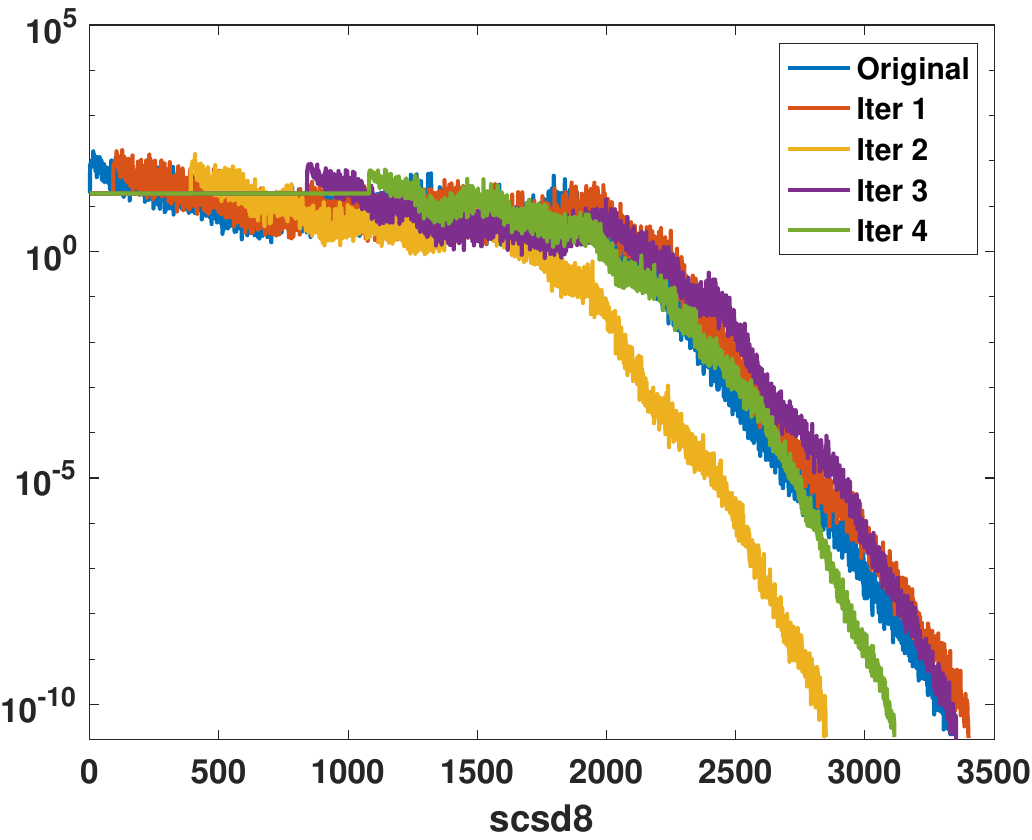}}{\includegraphics[scale=0.2]{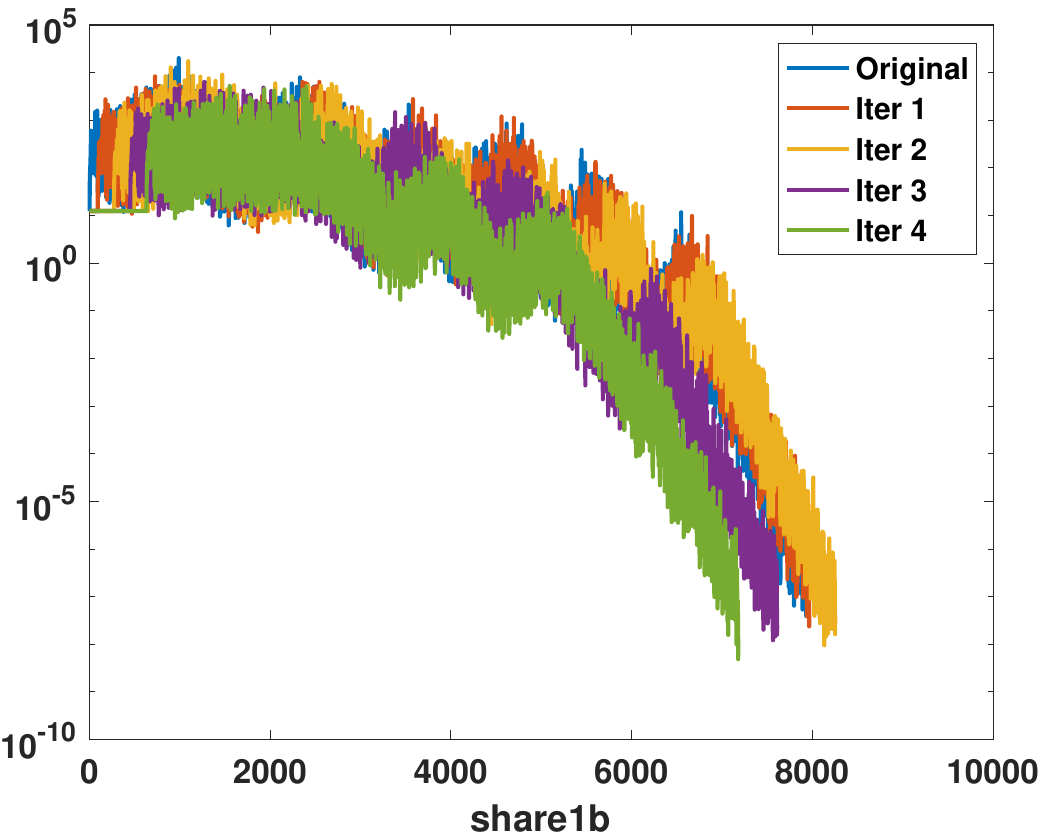}}{\includegraphics[scale=0.2]{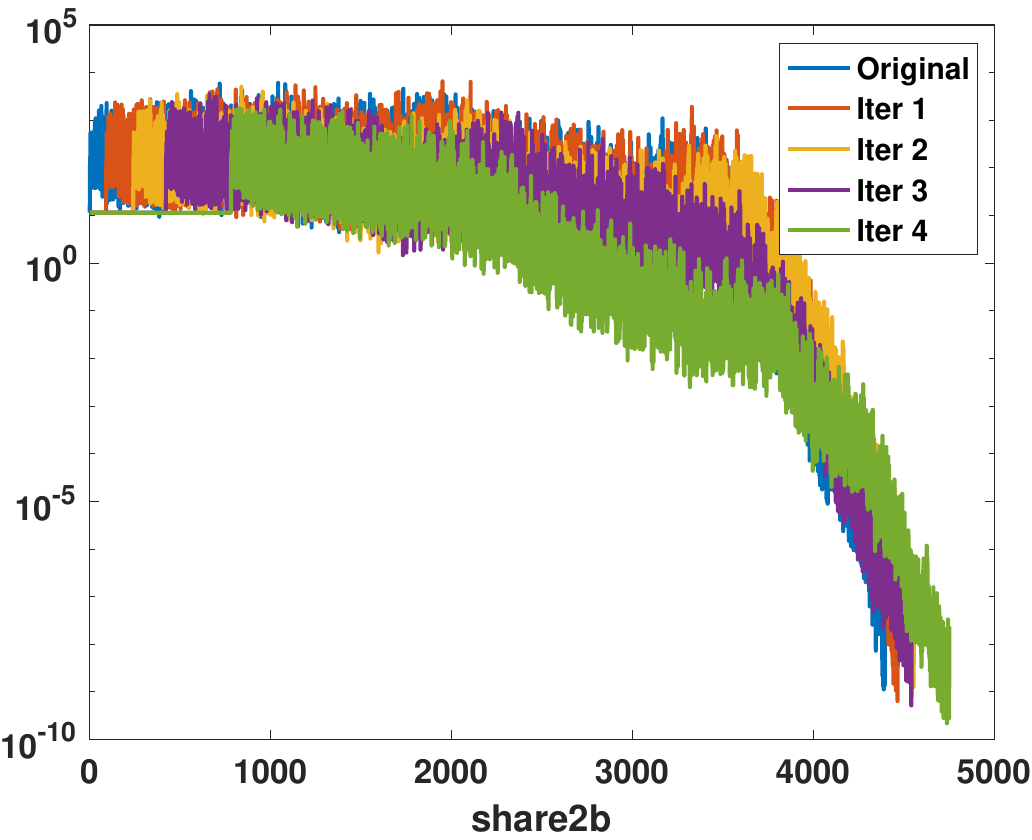}}
  \caption{Convergence behavior of PCG on normal matrices}
\end{figure}

\subsection{Diagonal preconditioning for iterative gradient-based algorithms}

In this section, we conduct additional experiments that show how diagonal
preconditioner can be applied in an iterative gradient-based method to
accelerate convergence.

\paragraph{Experiment setup}

We consider regularized logistic regression problem from \texttt{LIBSVM}
dataset
\begin{eqnarray*}
  \min_{x \in \mathbb{R}^n} & f (\mathbf{x}) \assign \frac{1}{m} \sum_{i =
  1}^m y_i \log \big( \tfrac{1}{1 + \exp (\mathbf{a}_i^{\top} \mathbf{x})}
  \big) + (1 - y_i ) \log \big( \tfrac{\exp (\mathbf{a}_i^{\top}
  \mathbf{x})}{1 + \exp (\mathbf{a}_i^{\top} \mathbf{x})} \big) +
  \frac{\lambda}{2} \| \mathbf{x} \|^2 & 
\end{eqnarray*}
with $\lambda = \frac{1}{10}$. We run preconditioned gradient descent
\[ \mathbf{x}^{k + 1} = \mathbf{x}^k - \mathbf{P}^{- 1}_k \nabla f
   (\mathbf{x}^k) \]
with the following choices of $\mathbf{P}_k$
\begin{itemize}[leftmargin=10pt]
  \item $\mathbf{P}_k \equiv \alpha \mathbf{I}$
  
  gradient descent with constant stepsize
  
  \item $\mathbf{P}_k \equiv \alpha \hat{\mathbf{P}} (\nabla^2 f
  (\mathbf{x}^1))$
  
  gradient descent preconditioned by approximate preconditioner in
  $\tmop{span} \{ \tmop{diag} (\nabla^2 f (\mathbf{x}^k)), \mathbf{I} \}$
  
  \item $\mathbf{P}_k = \alpha \tmop{diag} (\nabla^2 f (\mathbf{x}^k))$
  
  gradient descent preconditioned by the diagonal of Hessian.
\end{itemize}
Note that other preconditioners cannot be computed in this context since
$\textbf{} \nabla^2 f (\mathbf{x}^k) = \mathbf{A}^{\top} \mathbf{B} \mathbf{A}
+ \lambda \mathbf{I}$ is hard to compute and store. The hessian diagonal
preconditioner is updated every 500 iterations. Gradient descent runs until
$\| \nabla f (\mathbf{x}^k) \| \leq 10^{- 4}$ or the maximum iteration (50000)
is reached. Time of computing $\hat{\mathbf{P}} (\nabla^2 f
(\mathbf{x}^1))$ is reflected in the plot.

\begin{figure}[h]
\centering
  {\includegraphics[scale=0.25]{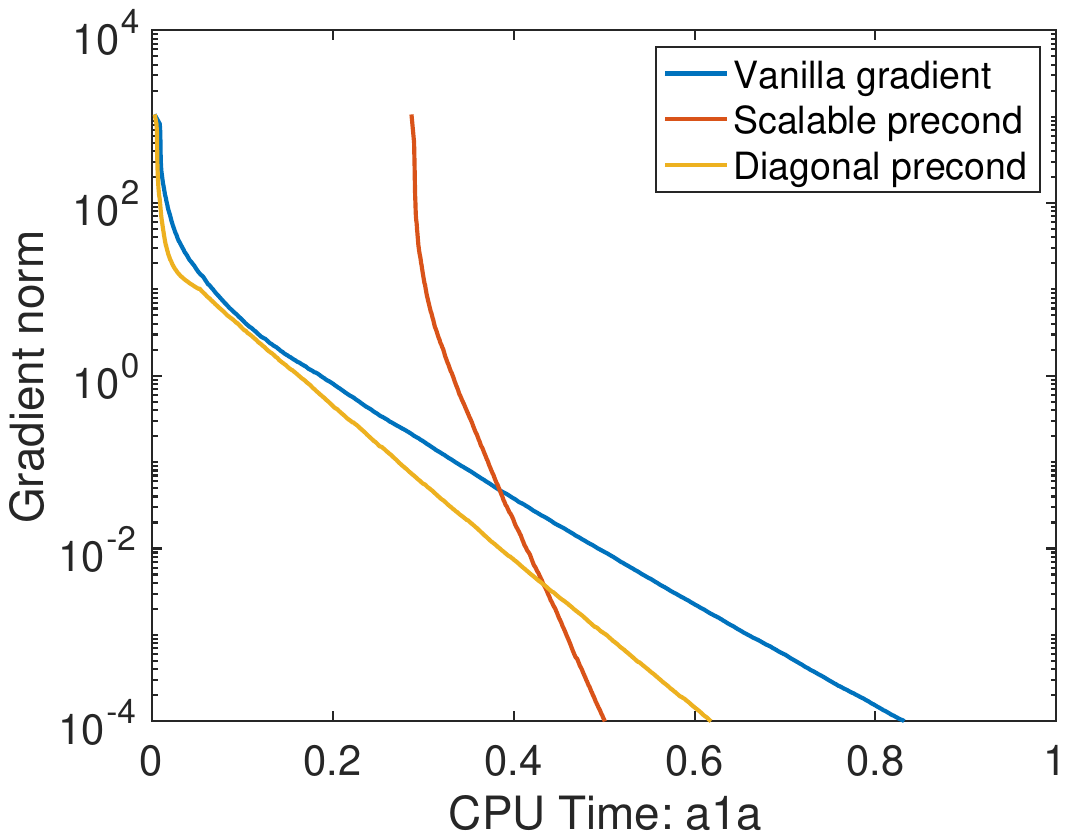}}{\includegraphics[scale=0.25]{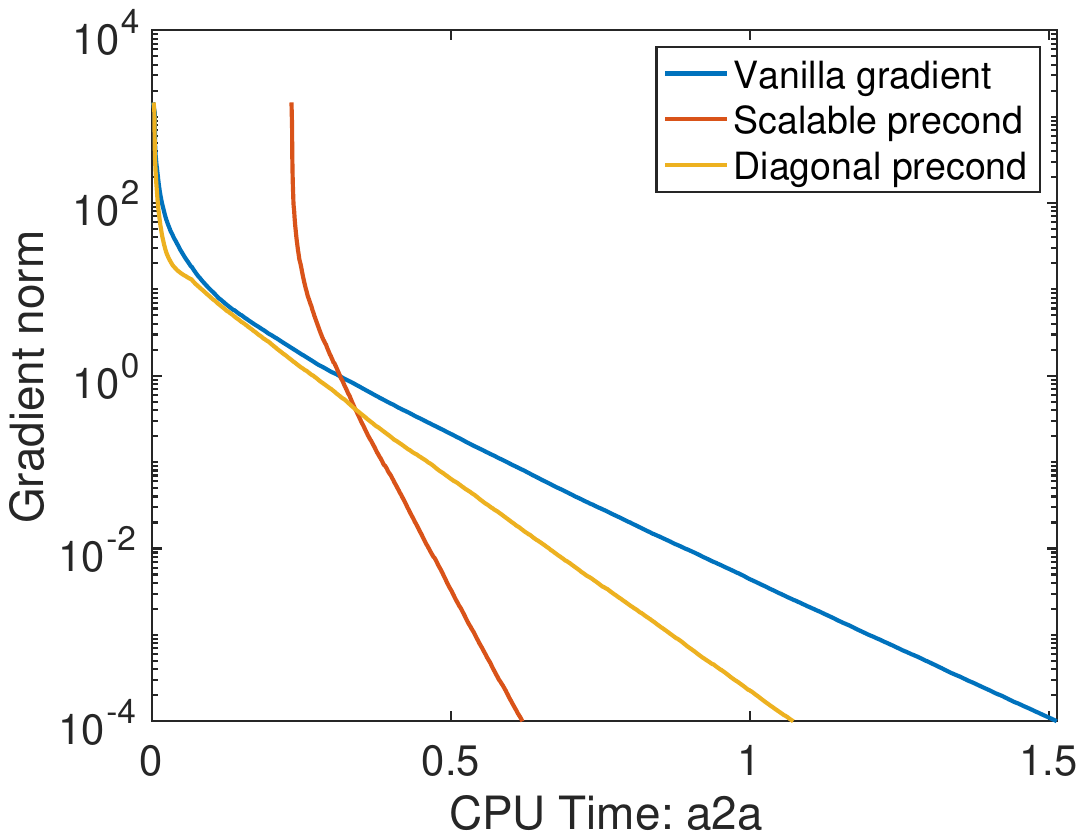}}{\includegraphics[scale=0.25]{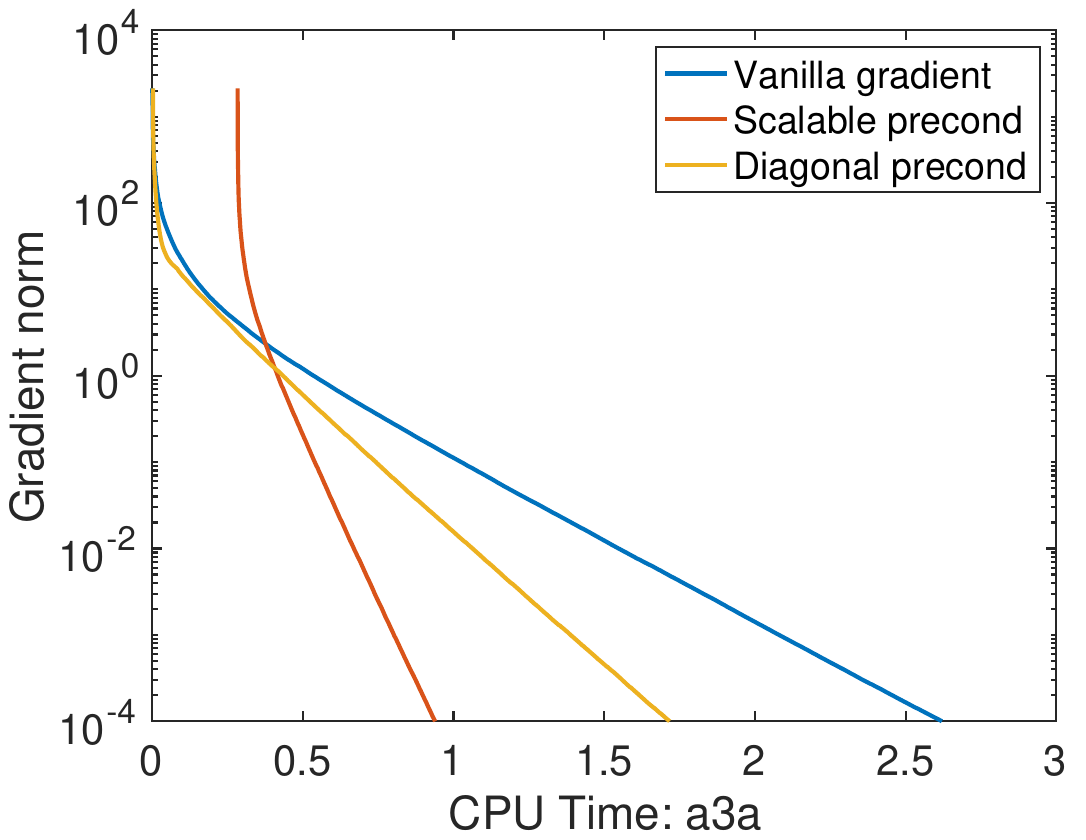}}
  
  {\includegraphics[scale=0.25]{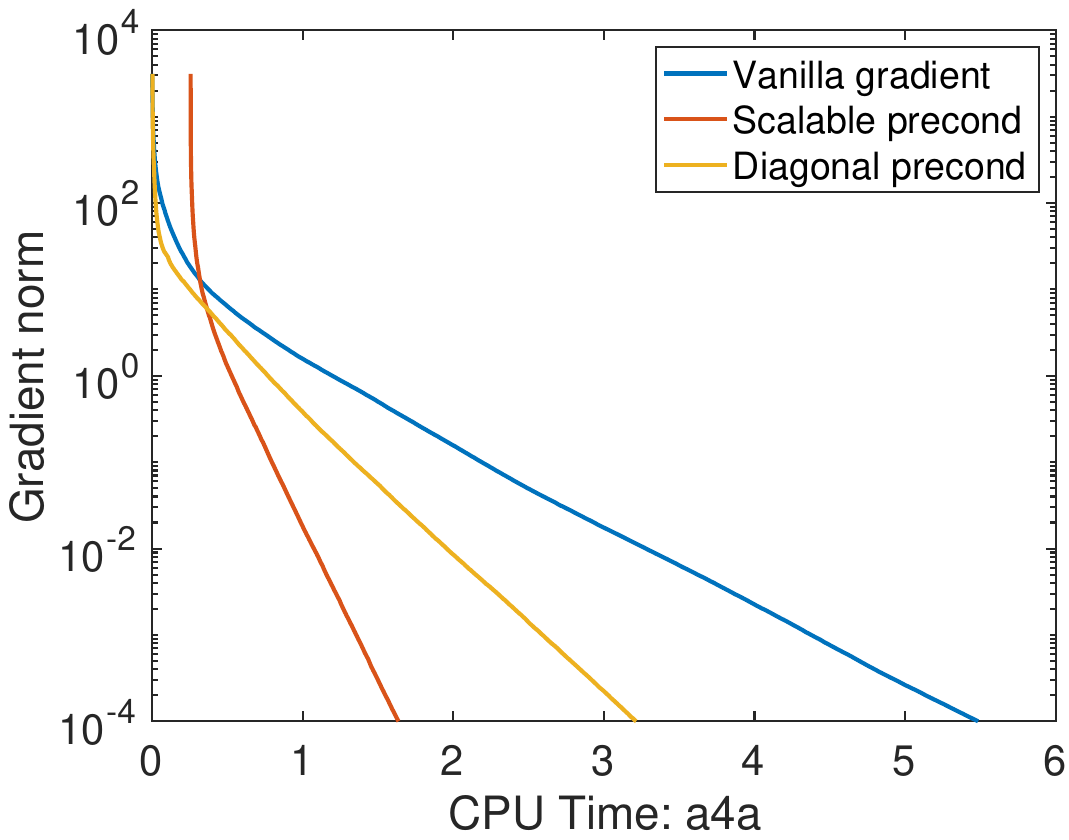}}{\includegraphics[scale=0.25]{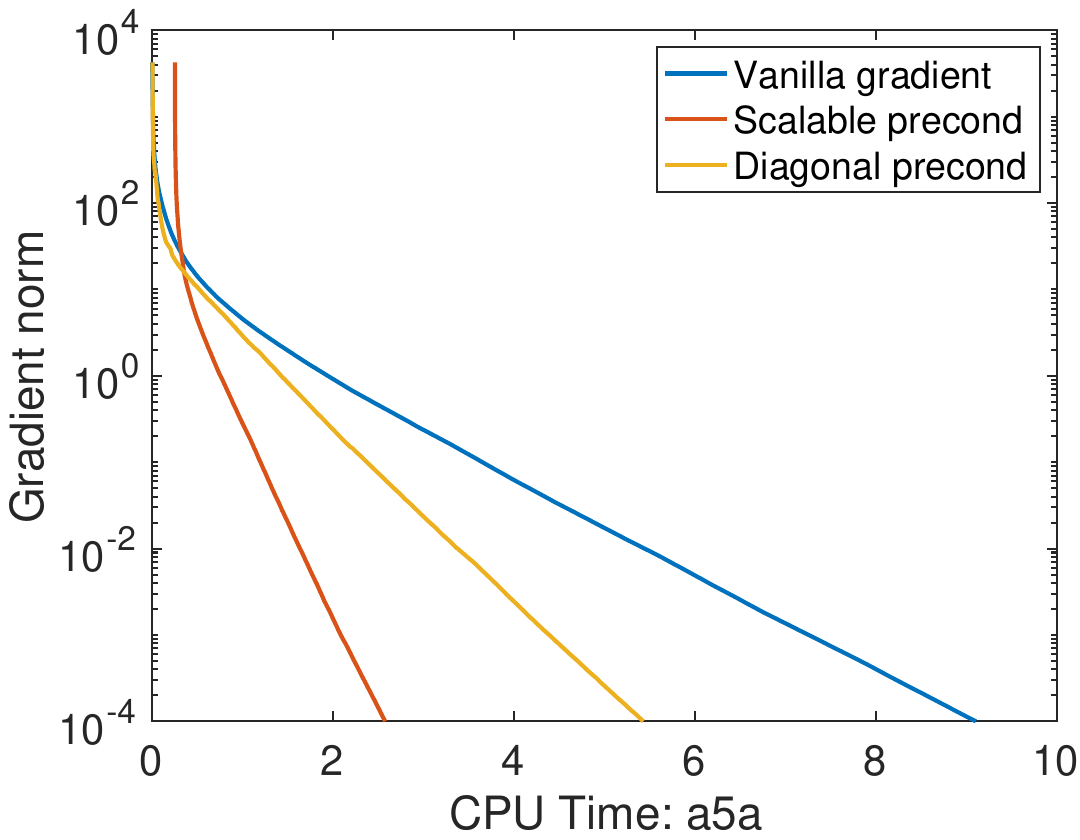}}{\includegraphics[scale=0.25]{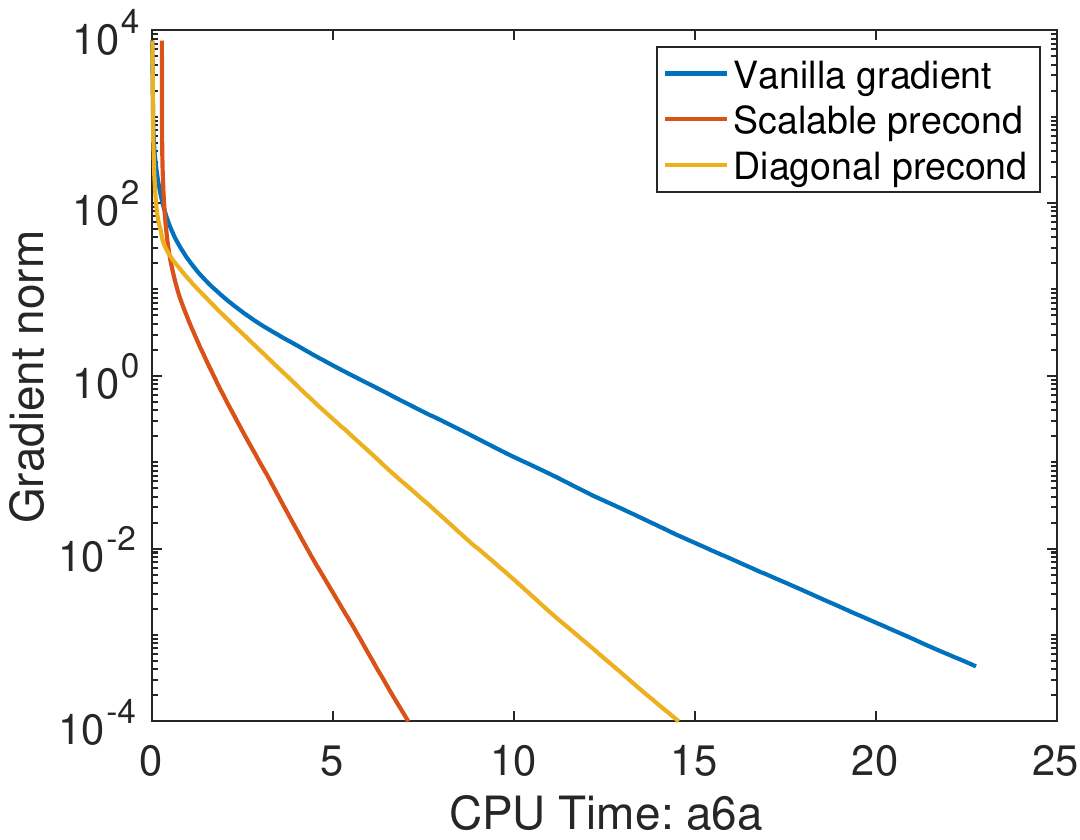}}
  
  {\includegraphics[scale=0.25]{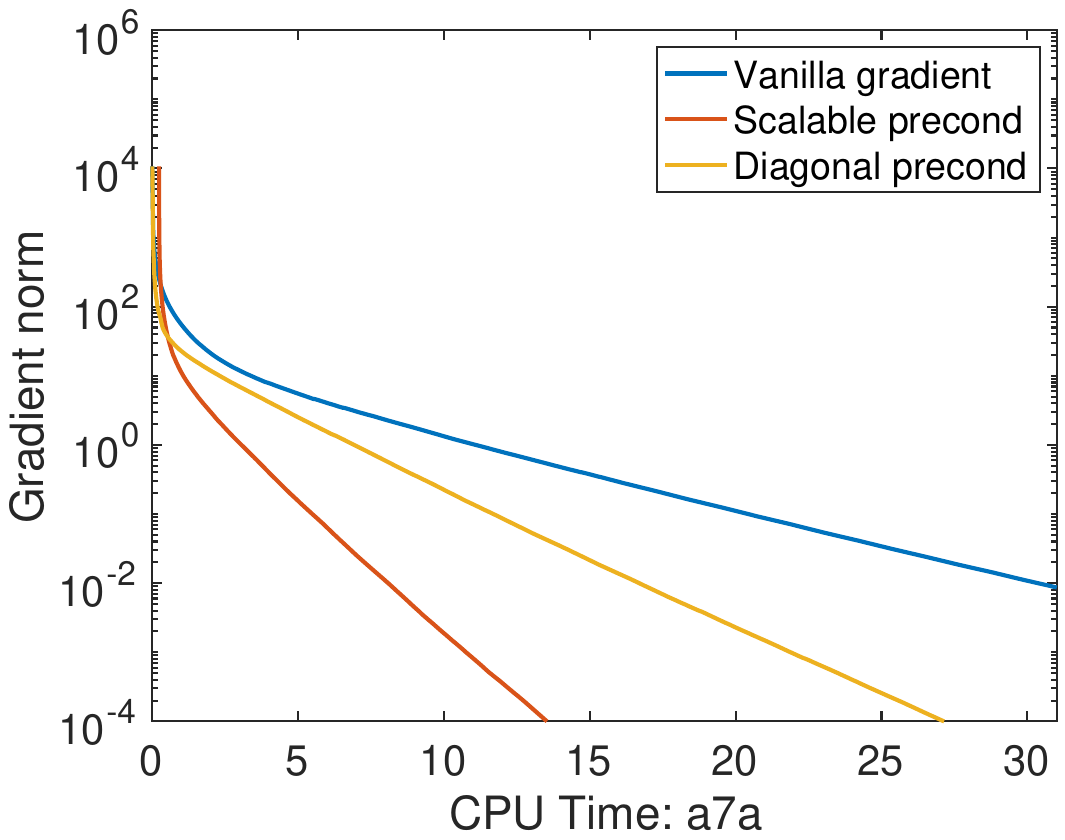}}{\includegraphics[scale=0.25]{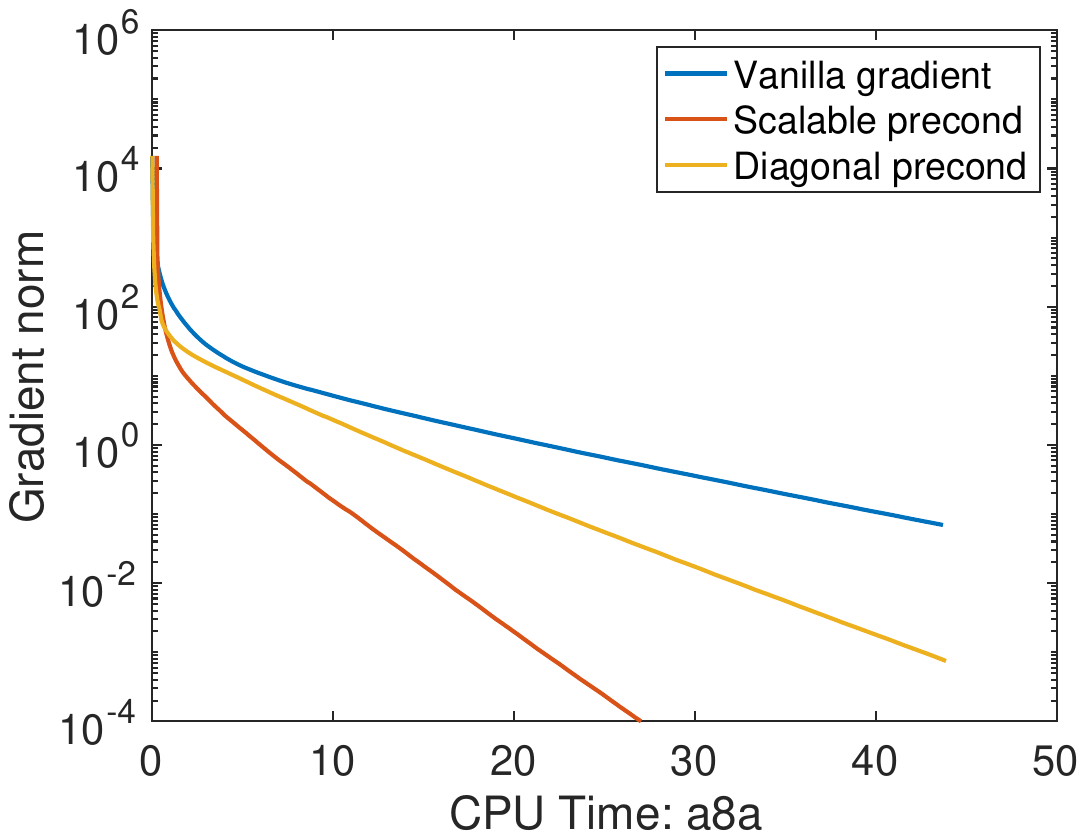}}{\includegraphics[scale=0.25]{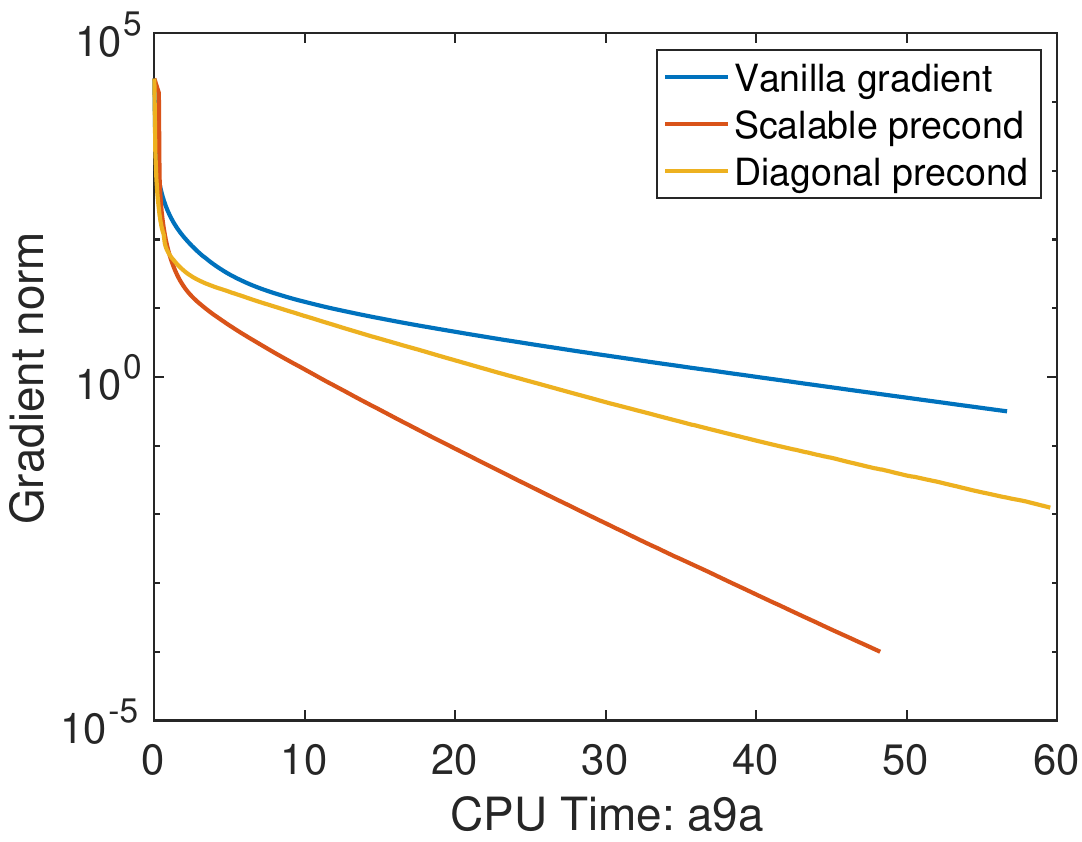}}
  \caption{Convergence behavior of preconditioned gradient descent on logistic
  regression}
\end{figure}

From the figure, we can see that gradient-based methods preconditioned by our
$\hat{\mathbf{P}}$ outperforms traditional preconditioning techniques.
Moreover, the time for finding $\hat{\mathbf{P}}$ is negligible compared
to time for subsequent gradient updates. Interestingly, for these logistic
regression problems, a reasonably good preconditioner is approximately
\[ \hat{\mathbf{P}} = 0.3 \times \mathbf{I} + 0.7 \times \tmop{diag}
   (\nabla^2 f (\mathbf{x}^k)) . \]

\end{document}